\renewenvironment{proof}[1][]{\par\noindent{\bf Proof #1\ }}{\hfill$\blacksquare$\\
[2mm]}
\newcommand*\mathinhead[1]{\texorpdfstring{#1}{}}
\title{Correlated Binomial Process}
\author{
  \textbf{Mo\"ise Blanchard}\\
  Massachusetts Institute of Technology\\
  \small{\texttt{moiseb@mit.edu}}
  \and
  \textbf{Doron Cohen}\\
  Ben-Gurion University of the Negev\\
  \small{\texttt{doronv@post.bgu.ac.il}}
  \and
 \textbf{Aryeh Kontorovich}\\
 Ben-Gurion University of the Negev\\
 \small{\texttt{karyeh@cs.bgu.ac.il}}
}
\date{}
\newtheorem{theorem}{Theorem}
\newtheorem{lemma}{Lemma}
\newtheorem{definition}{Definition}
\newtheorem{proposition}{Proposition}
\newtheorem{corollary}{Corollary}
\newcommand{\pred}[1]{\boldsymbol{1}[#1]}
\newcommand{\R}{\mathbb{R}}
\newcommand{\N}{\mathbb{N}}
\newcommand{\inv}{^{-1}}
\DeclareMathOperator*{\argmin}{arg\,min}
\newcommand{\sqprn}[1]{\left[ #1 \right]}
\newcommand{\nrm}[1]{\left\Vert #1 \right\Vert}
\newcommand{\vertiii}[1]{{\left\vert\kern-0.25ex\left\vert\kern-0.25ex\left\vert #1 
    \right\vert\kern-0.25ex\right\vert\kern-0.25ex\right\vert}}
\newcommand{\E}{\mathbb{E}}
\renewcommand{\P}{\mathbb{P}}
\newcommand{\Cov}[1]{\operatorname{Cov}\left[#1\right]}
\newcommand{\eps}{\varepsilon}
\newcommand{\eqdef}{:=}
\newcommand{\defeq}{=:}
\newcommand{\beq}{\begin{eqnarray*}}
\newcommand{\eeq}{\end{eqnarray*}}
\newcommand{\beqn}{\begin{eqnarray}}
\newcommand{\eeqn}{\end{eqnarray}}
\newcommand{\ent}[1][]{%
\ifthenelse{\isempty{#1}}{%
\mathrm{H}
}{
\mathrm{H}^{(#1)}
}}
\newcommand{\loch}[1][]{%
\ifthenelse{\isempty{#1}}{%
\mathrm{h}
}{
\mathrm{h}^{(#1)}
}}
\newcommand{\mathe}{\mathrm{e}}
\newcommand{\hide}[1]{}
\def\longto{\mathop{\longrightarrow}\limits}
\newcommand{\ninf}{\longto_{n\to\infty}}
\newcommand{\mx}{\vee}
\newcommand{\mn}{\wedge}
\renewcommand{\phi}{\varphi}
\newcommand{\Bernu}{\operatorname{Bernoulli}}
\newcommand{\Binom}{\operatorname{Binomial}}
\newcommand{\parent}{\operatorname{parent}}
\newcommand{\iid}{\operatorname{iid}}
\newcommand{\Order}{\operatorname{Order}}
\newcommand{\mathd}{\mathrm{d}}
\newcommand{\p}[1][]{p_{#1}}
\newcommand{\pn}[1][]{\hat p_{#1}}
\newcommand{\q}[1][]{q_{#1}}
\newcommand{\qn}[1][]{\hat q_{#1}}
\renewcommand{\r}[1][]{r_{#1}}
\renewcommand{\u}[1][]{u_{#1}}
\newcommand{\un}[1][]{\hat u_{#1}}
\newcommand{\subplus}{_{
\scalebox{.5}{
\hspace{.1em}
\!\!\!\!\!
$\boldsymbol{+}$}
}}
\newcommand{\subminus}{_{
\scalebox{.5}{
\hspace{+0.1em}
\!\!\!\!\!
$\boldsymbol{-}$}
}}
\newcommand{\pl}[1]{\sqprn{#1}\subplus}
\newcommand{\mns}[1]{\sqprn{#1}\subminus}
\begin{document}


\newcommand{\Acal}{\mathcal{A}}
\newcommand{\Bcal}{\mathcal{B}}
\newcommand{\Ccal}{\mathcal{C}}
\newcommand{\Dcal}{\mathcal{D}}
\newcommand{\Ecal}{\mathcal{E}}
\newcommand{\Fcal}{\mathcal{F}}
\newcommand{\Gcal}{\mathcal{G}}
\newcommand{\Hcal}{\mathcal{H}}
\newcommand{\Ical}{\mathcal{I}}
\newcommand{\Kcal}{\mathcal{K}}
\newcommand{\Lcal}{\mathcal{L}}
\newcommand{\Ncal}{\mathcal{N}}
\newcommand{\Pcal}{\mathcal{P}}
\newcommand{\Scal}{\mathcal{S}}
\newcommand{\Tcal}{\mathcal{T}}
\newcommand{\Ucal}{\mathcal{U}}
\newcommand{\Vcal}{\mathcal{V}}
\newcommand{\Wcal}{\mathcal{W}}
\newcommand{\Xcal}{\mathcal{X}}
\newcommand{\Ycal}{\mathcal{Y}}
\newcommand{\Ocal}{\mathcal{O}}
\newcommand{\Qcal}{\mathcal{Q}}
\newcommand{\Rcal}{\mathcal{R}}

\newcommand{\Cbb}{\mathbb{C}}
\newcommand{\Ebb}{\mathbb{E}}
\newcommand{\Nbb}{\mathbb{N}}
\newcommand{\Pbb}{\mathbb{P}}
\newcommand{\Qbb}{\mathbb{Q}}
\newcommand{\Rbb}{\mathbb{R}}
\newcommand{\Sbb}{\mathbb{S}}
\newcommand{\Xbb}{\mathbb{X}}
\newcommand{\Ybb}{\mathbb{Y}}
\newcommand{\Zbb}{\mathbb{Z}}

\newcommand{\one}{\mathbbm{1}}
\newcommand{\1}{\mathbbm{1}}

\newcommand{\mb}[1]{\ensuremath{\boldsymbol{#1}}}

\newcommand{\paren}[1]{\left( #1 \right)}
\newcommand{\sqb}[1]{\left[ #1 \right]}
\newcommand{\set}[1]{\left\{ #1 \right\}}
\newcommand{\floor}[1]{\lfloor #1 \rfloor}
\newcommand{\ceil}[1]{\lceil #1 \rceil}
\newcommand{\abs}[1]{\left|#1\right|}

\newcommand{\comment}[1]{}

\maketitle

\begin{abstract}
Cohen and Kontorovich (COLT 2023) initiated the study of what we call here the Binomial Empirical Process: the maximal absolute value of a sequence of inhomogeneous normalized and centered binomials. They almost fully analyzed the case where the binomials are independent, and the remaining gap was closed by Blanchard and Vor\'{a}\v{c}ek (ALT 2024). In this work, we study the much more general and challenging case with correlations. In contradistinction to Gaussian processes, whose behavior is characterized by the covariance structure, we discover that, at least somewhat surprisingly, for binomial processes covariance does not even characterize convergence. Although a full characterization remains out of reach, we take the first steps with nontrivial upper and lower bounds in terms of covering numbers.
\end{abstract}

\paragraph{Keywords.} empirical process; subgaussian; concentration; high dimension; convergence

\section{Introduction}

\citet{CohenK23} introduced the following problem:
Let $Y_j$, $j\in\N$
be a sequence of independent 
$\Binom(n,\p[j])$
random variables,
where $n,j\in\N$.
Since $\E Y_j=n\p[j]$,
it makes sense to
consider
the centered, normalized process
$\bar Y_j:=
n\inv Y_j-\p[j]
$,
which we will refer to as the {\em binomial empirical process}.
The object of interest is
the
expected uniform absolute deviation:
\beqn
\label{eq:barY}
\Delta_n &:=&
\E\sup_{j\in\N}|\bar Y_j|.
\eeqn

The study of $\Delta_n$
--- and in particular, its decay as $n$ grows
--- is a special case of
estimating the mean of a random variable 
$X\in\R^d$
from
a sample of independent draws,
which is
among the most basic problems of statistics.
Much of the earlier theory has focused on obtaining
efficient estimators $\hat m_n$
of the true mean $m$ and
analyzing the decay of
$\nrm{\hat m_n-m}_2$
as a function of sample size $n$, dimension $d$,
and various moment assumptions on $X$
\citep{catoni2012challenging,Devroye16,LuMen19a,LuMen19b,Cherapanamjeri19,Cherapanamjeri20,diakonikolas2020outlier,hopkins2020mean,LuMen21,lee2022optimal}. 
\citet{CohenK23}
were
inspired by \citet{cstheory-faster-low-ent},
who
considered a distribution $\mu$
on $\set{0,1}^d$ with mean
$p\in[0,1]^d$.
Stated in our terms:
an iid sample
$X^{(i)}\sim\mu$ of size $n$ is drawn,
$\pn:=n\inv\sum_{i=1}^n X^{(i)}$ is the empirical mean,
and
$
\Delta_n(\mu) = \E\nrm{\pn-p}_\infty
$.
(\citet{cstheory-faster-low-ent} had conjectured
that the entropy of $\mu$
might control the decay of $\Delta_n$
but \citeauthor{CohenK23} ruled this out.)
The 
$\ell_\infty$
norm 
is in some sense the most interesting
of the $\ell_r$ norms;
indeed, for $r<\infty$,
$\Delta_n^{(r)}:=\E\nrm{\pn-p}_r^r$
decomposes into a sum of expectations
and the condition
$\Delta_n^{(r)}\to0$
reduces to one of convergence
of the appropriate
series.
The uniform convergence implied by
the $\nrm{\cdot}_\infty$
norm as well as the dependence on the particular
distribution $\mu$
led the authors to refer to this problem
as {\em local Glivenko-Cantelli}.

\citet{CohenK23} obtained an almost complete
understanding of the behavior of
$\Delta_n(\mu)$
in the case of product measures
--- i.e., where the $X^{(i)}$ and hence also the $Y^{(i)}$ are independent,
and $\mu$ is 
a product measure
determined entirely by its mean $p$.
Restricting
$p\in[0,1]^\N$
to the range $[0,\frac12]$
and requiring that $\p[j]\downarrow0$
as $j\to\infty$
(which incurs no loss of generality, as shown ibid.)
\citeauthor{CohenK23}
defined the functional
\beq
T(p) &:=& \sup_{j\in\N}\frac{\log (j+1)}{\log(1/\p[j])}
\eeq
and showed that 
$\Delta_n\to0$
iff
$T(p)<\infty$. They also characterized up to constants the asymptotic convergence of $\Delta_n$ when $T(p)<\infty$ via the functional
\beq
S(p) &:=& \sup_{j\in\N}\p[j]\log (j+1),
\eeq
establishing that $\Delta_n$ decays as $\sqrt{S(p)/n}$. 
Additional finite-sample bounds provided therein
were tightened by \citet{blanchard2023tight} as follows:
\begin{align}
    \Delta_n &\asymp 1\land \paren{\sqrt{\frac{S(p)}{n}} + \sup_{j\geq 1}\frac{\log(j+1)}{n\log\paren{2+\frac{\log(j+1)}{n\p[j]}}}}, &\text{if } n\cdot \sup_{j\geq 1} 2j\p[j] >1, \label{eq:tight_bound_independent_1} \\
    \Delta_n &\asymp \frac{1}{n}\land \sum_{j\geq 1}\p[j], & \text{otherwise.} \label{eq:tight_bound_independent_2}
\end{align}

\paragraph{Our contributions.}
In this paper, we tackle the much more challenging
general case of correlated $Y_j
\sim \Binom(n,\p[j])
$. 
The decoupling result in
Theorem~\ref{thm:neg-corr} shows that when the 
pairwise
correlations
are negative,
the behavior of $\Delta_n(\mu)$
remains as in the independent case, up to universal constants.
This might lead one to conjecture that the pairwise correlations
suffice
to characterize the decay of $\Delta_n$.
Theorem~\ref{thm:2procs} decisively shatters this conjecture,
by exhibiting two processes $\mu,\nu$ with identical pairwise covariances
but for which $\Delta_n(\mu)$ decays to $0$ while 
$\Delta_n(\nu)$ does not.
While characterizing the measures $\mu$ for which $\Delta_n(\mu)\ninf 0$ remains a challenging
open problem, we take nontrivial first steps
in this direction. In particular, we define two metrics and show how covering numbers
with respect to these provide upper and lower bounds on $\Delta_n(\mu)$.
More specifically, we give necessary and sufficient conditions on these covering numbers 
for $\Delta_n(\mu)\ninf 0$ 
to hold
and show that 
among the covering-number-based bounds,
this is essentially the best one can do.
Some of the techniques developed for the results may be of independent interest, including decoupling  (Proposition~\ref{prop:ber-decoup}), subgaussian increments (Lemma~\ref{lem:pqr-mgf}), a recursive argument to extract components with large deviations (\cref{thm:finite_cov_num}), and probability distributions with tree structures (Proposition~\ref{prop:example_covering_nb}).

\paragraph{Notation.}

For any two quantities $a,b\in(0,\infty)$, we 
use the shorthand $a\lesssim b$ when there exists a universal constant $C>0$ (independent of any parameters of the problem) such that $a\leq Cb$. Likewise, $a\gtrsim b$ if $b\lesssim a$ and $a\asymp b$ if both $a\lesssim b$ and $a\gtrsim b$ hold.
The floor and ceiling functions, $\floor{t}$,
$\ceil{t}$,
map $t\in\R$
to its closest integers below and above, respectively;
also,
$s\mx t \eqdef \max\set{s,t}$,
$s\mn t \eqdef \min\set{s,t}$,
$\pl{s} \eqdef 0\mx s$
and
$\mns{s} \eqdef 0\mx (-s)$.
Unspecified constants
such as $c,c'$
may change value
from line to line.
We use superscripts to denote distinct random vectors
and subscripts to denote indices within a given vector.
Thus, if $X^{(1)},,\ldots,X^{(n)}$ are independent copies of
$X$, then $X^{(i)}_j$ denotes the $j$th component of the $i$th copy.

The measure-theoretic subtleties of defining distributions
on $\set{0,1}^\N$ are addressed in \citet{CohenK23}.
If $\mu$ is a probability measure on
$\set{0,1}^\N$, we say that $\tilde\mu$ is its
{\em product version} if $\tilde\mu$ is a product
measure on $\set{0,1}^\N$ that agrees with $\mu$ on all of the marginals.
Equivalently, if $X\sim\mu$ and $\tilde X\sim\tilde\mu$,
then 
the components $X_j$ and $\tilde X_j$
are equal in distribution and the $
\set{X_j:j\in\N}$
are mutually independent. In this case, we say that $\tilde X$
is the {\em independent version} of $X$. If $X^{(1)},\ldots,X^{(n)}$
are independent copies of 
$X\sim\mu$, then
\beqn
\Delta_n(\mu) := \E\sup_{j\in\N}\abs{
\frac{1}{n}\sum_{i=1}^n X_j^{(i)}-\E[X_j]
};
\eeqn
this is equivalent to the definition in \eqref{eq:barY}.

A probability measure $\mu$ on $\set{0,1}^\N$
induces the metrics $\xi$ and $\rho$ on $\N$ as follows.
Putting
\beqn
\label{eq:prij}
\p[i] \;:=\; \E_{X\sim\mu}[X_i],
\qquad
r_{ij} \;:=\; \E_{X\sim\mu}[X_i X_j],
\qquad i,j\in\N,
\eeqn
we define
\beqn
\label{eq:xi-metric}
\xi(i,j) &:=& \Pbb(X_i\neq X_j) = \p[i]+\p[j]-2r_{ij} , \\
\label{eq:rho-metric}
\rho(i,j) &:=& 
\frac{2}{\sqrt{3}}\mn \sqrt{\frac{2}{\log {\frac{2}{
\xi(i,j)
}}}}
,
\eeqn
where we set \(\rho(i,j) = 0\) whenever \(\xi(i,j) = 0\).
It is straightforward to verify that both are metrics;
for the former, we have
$\xi(i,j) = \E[(X_i - X_j)^2]$
and the latter is verified as such in Lemma~\ref{lem:rho-metric}.

\section{Main results}
\label{sec:main_results}

Our first result is a decoupling inequality
comparing a negatively correlated binomial process
to its independent version.
If 
$\mu$ is a probability measure on $\set{0,1}^\N$
and $\tilde\mu$ its {\em product version}
as defined above (the two agree on the marginals and $\tilde\mu$
is a product measure),
then ``decoupling from above'', i.e., $\Delta_n(\mu)\lesssim\Delta_n(\tilde\mu)$,
holds without any structural assumptions on $\mu$ and the proof is
quite straightforward \citep[Proposition 3.1]{CHOLLETE202351}.
The other direction is far less trivial
and is clearly not true
in general:
\begin{theorem}[Decoupling from below]
    \label{thm:neg-corr}
Let $\mu$ be a probability measure on $\set{0,1}^\N$ 
with negatively correlated coordinates
(i.e., $X\sim\mu$
verifies
$\E[X_i X_j]\le \E[X_i]\E[X_j]$
for $i,j\in\N$)
and $\tilde\mu$ its {\em product version}.
    Then
\beq
\Delta_n(\mu)
&\ge&
\frac14
\Delta_n(\tilde\mu), \qquad n\ge 1
.
\eeq

\end{theorem}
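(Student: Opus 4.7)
The plan is a two-stage reduction: first split the absolute-value supremum into positive and negative parts to reduce to a one-sided comparison, and then compare one-sided suprema by exploiting pairwise negative correlation through a joint moment-generating-function identity at the pair level.

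\textbf{Step 1 (sign decomposition).} For any real sequence $(z_j)$ we have the elementary inequalities
\[
\tfrac12\bigl(\sup_j z_j^+ + \sup_j z_j^-\bigr) \;\le\; \sup_j |z_j| \;\le\; \sup_j z_j^+ + \sup_j z_j^-,
\]
where $z^\pm = \max(\pm z, 0)$. Applying the lower bound to $z_j = \bar Y_j^{(\mu)}$ and the upper bound to $z_j = \bar Y_j^{(\tilde\mu)}$ and taking expectations, the theorem reduces to the one-sided claim
\[
\E\sup_j \bigl(\bar Y_j^{(\mu)}\bigr)^+ \;\ge\; \tfrac12\,\E\sup_j \bigl(\bar Y_j^{(\tilde\mu)}\bigr)^+
\]
together with its reflection $+\leftrightarrow -$; the two compose to give the factor $\tfrac14$.

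\textbf{Step 2 (one-sided comparison via pairwise MGF).} I would write each side as a tail integral $\E\sup_j (\bar Y_j)^+ = \int_0^\infty \Pbb(\bigcup_j A_j(t))\,dt$ with $A_j(t) = \{\bar Y_j > t\}$. Since each $\bar Y_j$ is a centered, scaled $\Binom(n,p_j)$ whose marginal depends only on $p_j$, the individual probabilities $\Pbb(A_j(t))$ coincide under $\mu$ and $\tilde\mu$; only the joint behavior differs. A Chung--Erd\H{o}s (second moment) inequality
\[
\Pbb\Bigl(\bigcup_{j\in J} A_j(t)\Bigr) \;\ge\; \frac{\bigl(\sum_j \Pbb(A_j(t))\bigr)^2}{\sum_{j,k} \Pbb(A_j(t)\cap A_k(t))},
\]
valid on any finite truncation $J\subset\N$, reduces the problem to an upper bound on the pairwise joint tails under $\mu$. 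The key input is a direct pair MGF identity: for all $\lambda_1, \lambda_2 \ge 0$,
\[
\E_\mu\bigl[e^{\lambda_1 X_j+\lambda_2 X_k}\bigr] - \E_{\tilde\mu}\bigl[e^{\lambda_1 \tilde X_j+\lambda_2 \tilde X_k}\bigr] = \bigl(\E[X_j X_k] - p_j p_k\bigr)(1-e^{\lambda_1})(1-e^{\lambda_2}) \;\le\; 0,
\]
since pairwise negative correlation gives $\E[X_jX_k]\le p_jp_k$ and the two $(1-e^{\lambda})$ factors are nonpositive. Tensorizing over the $n$ iid copies yields an MGF domination for $(S_j,S_k) = (\sum_i X_j^{(i)}, \sum_i X_k^{(i)})$, from which Markov extracts the pair decoupling $\Pbb_\mu(\bar Y_j > t,\bar Y_k > t) \lesssim \Pbb(\bar Y_j > t)\Pbb(\bar Y_k > t)$; fed back into the Chung--Erd\H{o}s ratio this matches the independent-case union probability up to a universal constant and completes the one-sided inequality.

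\textbf{Main obstacle.} The critical subtlety is obtaining a \emph{universal} constant (independent of $n$, $p_j$, $p_k$) in the pair joint-tail bound, since the naive Chernoff bound is loose compared to the true binomial tail by the standard $\Theta(\sqrt{np(1-p)})$ polynomial factor, and this would propagate into a non-universal loss in the Chung--Erd\H{o}s ratio. Circumventing this likely requires either carefully aligning the dual variables $\lambda_1,\lambda_2$ with the marginal Chernoff optima for $\Pbb(\bar Y_j>t)$ and $\Pbb(\bar Y_k>t)$, or bypassing the MGF altogether by a direct pointwise comparison of the joint binomial PMF $\Pbb_\mu(S_j=s_j, S_k=s_k)$ against its independent counterpart, exploiting log-concavity of the binomial distribution.
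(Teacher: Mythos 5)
Your overall architecture matches the paper's: the sign decomposition costing a factor $\tfrac12$ per side (hence $\tfrac14$), the tail-integral representation of the one-sided supremum, and the second-moment (Chung--Erd\H{o}s, equivalently Paley--Zygmund) lower bound on $\Pbb\bigl(\bigcup_j A_j(t)\bigr)$ compared against the independent union probability $1-\prod_j(1-\Pbb(A_j(t)))$ are all exactly the steps the paper takes. The gap is at the step you yourself flag as the ``main obstacle'': you need $\Pbb_\mu(\bar Y_j>t,\,\bar Y_k>t)\le C\,\Pbb(\bar Y_j>t)\Pbb(\bar Y_k>t)$ with a \emph{universal} $C$ (the second-moment ratio only beats $1-\prod(1-q_j)$ by a constant when the pairwise terms are controlled by the product with constant close to $1$), and your route does not deliver it. The bivariate MGF identity is correct, but MGF domination on the positive quadrant does not imply upper-orthant domination of the joint law, and extracting tails by Markov/Chernoff compares the joint tail to the \emph{Chernoff bounds} on the marginals, which overshoot the true binomial tails by polynomial-in-$n$ factors; the proposed fixes (aligning dual variables with the marginal Chernoff optima, or a pointwise PMF comparison via log-concavity) are not carried out and the first one cannot remove that loss.

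The missing idea, which is how the paper closes exactly this step, is negative association. A direct computation (the same one your MGF identity encodes) shows that a binary pair with $\E[X_iX_j]\le p_ip_j$ is negatively associated; by \citet[Properties P$_6$, P$_7$]{10.1214/aos/1176346079}, the union of the $n$ independent NA pairs $(X_i^{(m)},X_j^{(m)})$ is NA, and increasing functions of disjoint index sets --- in particular the positive parts $\pl{Y_i}$, $\pl{Y_j}$ of the centered empirical means --- are again NA, hence pairwise negatively upper orthant dependent. This gives $\Pbb(\bar Y_j>t,\bar Y_k>t)\le\Pbb(\bar Y_j>t)\Pbb(\bar Y_k>t)$ with constant exactly $1$, after which your Chung--Erd\H{o}s computation goes through and yields the factor $\tfrac12$ per side (the paper's Proposition~\ref{prop:ber-decoup} and Corollary~\ref{cor:neg-orth}). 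Without this (or some other proof that the exceedance indicators of the empirical means inherit nonpositive pairwise covariance from the coordinates), the proposal does not constitute a proof.
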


The proof is given in \cref{sec:proof_thm_1}. In particular, together with the tight bounds Eq~\eqref{eq:tight_bound_independent_1} and \eqref{eq:tight_bound_independent_2} from \citet{blanchard2023tight} 
for
independent Bernoulli sequences, we directly obtain tight non-asymptotic bounds for negatively correlated Bernoulli sequences.

\begin{corollary}
\label{cor:neg-cor}
    Let $\mu$ be a probability measure on $\{0,1\}^\N$ with negatively correlated coordinates. Then, Eq~\eqref{eq:tight_bound_independent_1} and \eqref{eq:tight_bound_independent_2} continue to hold.
\end{corollary}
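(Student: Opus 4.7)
The plan is to sandwich $\Delta_n(\mu)$ between two constant multiples of $\Delta_n(\tilde\mu)$, where $\tilde\mu$ is the product version of $\mu$, and then invoke the tight bounds \eqref{eq:tight_bound_independent_1}–\eqref{eq:tight_bound_independent_2} for the independent case. The key observation is that the right-hand sides of these bounds depend on $\mu$ only through the marginals $\p[j] = \E_{X\sim\mu}[X_j]$, and $\mu$ and $\tilde\mu$ agree on all marginals by definition of the product version.

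First, I would recall the ``decoupling from above'' inequality $\Delta_n(\mu) \lesssim \Delta_n(\tilde\mu)$, which the paper attributes to \citet{CHOLLETE202351} and which holds without any assumption on $\mu$. Next, I would apply \cref{thm:neg-corr}, which gives the reverse inequality $\Delta_n(\mu) \gtrsim \Delta_n(\tilde\mu)$ under the negative correlation hypothesis. Chaining these yields $\Delta_n(\mu) \asymp \Delta_n(\tilde\mu)$, where the implicit constants are universal.

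Finally, I would apply the bounds \eqref{eq:tight_bound_independent_1}–\eqref{eq:tight_bound_independent_2} of \citet{blanchard2023tight} to the independent sequence governed by $\tilde\mu$. Since those expressions involve only the marginal success probabilities $\p[j]$, which are identical for $\mu$ and $\tilde\mu$, substituting $\Delta_n(\mu)$ in place of $\Delta_n(\tilde\mu)$ on the left-hand side preserves the estimates up to a further universal constant (which is absorbed into the $\asymp$ notation).

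There is no substantive obstacle here: the corollary is essentially a combination of \cref{thm:neg-corr} with an off-the-shelf result. The only point to verify carefully is that the $\asymp$ in \eqref{eq:tight_bound_independent_1}–\eqref{eq:tight_bound_independent_2} is stable under multiplication of $\Delta_n$ by a universal constant, which is immediate since the right-hand sides are themselves defined up to constants in the $\asymp$ relation. All the real work has already been done in \cref{thm:neg-corr}, and this corollary is just a convenient repackaging.
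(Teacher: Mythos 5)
Your argument is correct, but it handles the upper-bound half differently from the paper. For the lower bounds you do exactly what the paper does: apply \cref{thm:neg-corr} to get $\Delta_n(\mu)\ge\frac14\Delta_n(\tilde\mu)$ and then use the lower bounds in \eqref{eq:tight_bound_independent_1}--\eqref{eq:tight_bound_independent_2}, noting that the right-hand sides depend only on the marginals $\p[j]$, which $\mu$ and $\tilde\mu$ share. For the upper bounds, however, you route through the ``decoupling from above'' inequality $\Delta_n(\mu)\lesssim\Delta_n(\tilde\mu)$ of \citet{CHOLLETE202351}, whereas the paper observes that no decoupling is needed at all: the upper bounds in \eqref{eq:tight_bound_independent_1}--\eqref{eq:tight_bound_independent_2} are proved in \citet[Corollary 3]{blanchard2023tight} using only union bounds and Markov's inequality and therefore hold verbatim for \emph{arbitrary} distributions on $\set{0,1}^\N$, with exactly the same constants. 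Both routes are valid and both yield the corollary up to universal constants (which is all the $\asymp$ statements demand); the paper's route has the mild advantage that only the lower bounds degrade (by the factor $\frac14$ from \cref{thm:neg-corr}), while yours additionally loses the unspecified constant from the decoupling-from-above step and depends on the external result of \citet{CHOLLETE202351} rather than on a property already built into the cited bounds.
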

In the previous result, only the lower bounds are worsened by a factor $\frac{1}{4}$ compared to those for independent Bernoulli sequences in Eq~\eqref{eq:tight_bound_independent_1} and \eqref{eq:tight_bound_independent_2}. It is worth noting that the upper bounds are exactly the same as for the independent case---these only require union bounds and Markov's inequality, hence also hold for general distributions, as noted in \citet[Corollary 3]{blanchard2023tight}.

Based on
Corollary~\ref{cor:neg-cor} 
as well as Gaussian process theory,
one might plausibly conjecture that
the behavior of $\Delta_n(\mu)$
is determined by the covariance structure
of $X\sim\mu$. It is therefore at least somewhat surprising that
this is very much not the case:
\begin{theorem}[Covariance does not characterize $\Delta_n$]
\label{thm:2procs}
There exist probability measures
$\mu,\nu$ on $\set{0,1}^\N$ that 
agree on their
covariance matrices,
\beq
\E_{X\sim\mu}[X_i X_j]
&=&
\E_{X\sim\nu}[X_i X_j]
,
\qquad i,j\in\N,
\eeq
while $\Delta_n(\mu) \ninf 0$ and $\Delta_n(\nu) \ninf \frac{1}{2}$.
\end{theorem}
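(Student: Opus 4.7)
The plan is to construct explicit measures $\mu,\nu$ on $\{0,1\}^{\N}$ with identical covariance matrices but dramatically different asymptotic behavior of $\Delta_n$.  The point is that the covariance matrix determines only the one- and two-dimensional marginals of a Bernoulli process, leaving three- and higher-dimensional joints entirely free -- and it is precisely the higher-order joints that drive $\sup_j|\bar Y_j|$.

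I would begin by fixing the common marginals $(p_j)$ and the common covariance (the simplest choice: all pairwise covariances equal to zero).  For $\mu$ I would take a measure with this covariance for which $\Delta_n(\mu)\to 0$: the canonical candidate is a product measure with $p_j$ decaying fast enough that $T(p)<\infty$, and the conclusion then follows from the Blanchard--Vor\'a\v{c}ek bound~\eqref{eq:tight_bound_independent_1}.  For $\nu$ I would construct a pairwise-independent but strongly higher-order-dependent measure using a latent-variable template: write $X_j=f_j(W,U_j)$ with $W$ a shared hidden variable and $U_j$ independent auxiliary randomness, where the functions $f_j(W,\cdot)$ are centered and pairwise $L^2$-orthogonal as functions of $W$ (yielding zero pairwise covariance after integrating out $W$) while the collection $\{f_j(W,\cdot)\}_j$ satisfies genuine $\mathbb{F}_2$-linear relations that produce clusters of simultaneous $1$'s. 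Walsh / Hadamard indicators and the tree-structured distributions of Proposition~\ref{prop:example_covering_nb} are natural concrete choices for this template.

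The central technical difficulty is to prove the exact limit $\Delta_n(\nu)\to\tfrac12$.  The lower bound $\liminf\Delta_n(\nu)\ge\tfrac12$ should follow from a classical linear-algebra argument over $\mathbb{F}_2$: given $n$ independent draws of infinitely many pairwise-independent Rademacher-like indicators, one can identify a nonempty ``direction'' along which the $n$ sign products all agree, forcing the corresponding coordinate $j^*$ to have $\bar X_{j^*}$ pinned at one of its extremes and $|\bar Y_{j^*}|$ close to $\tfrac12$. The matching upper bound requires more care: one must use the pairwise structure together with the universal bound $|\bar Y_j|\le\max(p_j,1-p_j)$ to argue that $\E\sup_j|\bar Y_j|\le\tfrac12+o(1)$, essentially because the clustering event of overwhelming contribution has probability matching the marginal constraint at the tuned scale $\tfrac12$.

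The main obstacle I anticipate is precisely this calibration:  it is relatively straightforward to exhibit $\mu,\nu$ with same covariance and $\Delta_n(\mu)\to 0$ while $\Delta_n(\nu)\not\to 0$ (the linear-algebra argument gives $\liminf\ge c>0$ for some $c$), but pinning the limit at exactly $\tfrac12$ requires engineering both the marginals $(p_j)$ and the latent-variable structure so that the two-sided bounds coincide.  This is where the bulk of the technical work will go, and where a tree-based realization of the latent variable (with carefully chosen branching probabilities) is likely most natural.
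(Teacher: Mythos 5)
There is a genuine gap, and it is fatal to the plan as stated: you propose to give $\mu$ and $\nu$ the \emph{same vanishing pairwise covariances} (a product measure for $\mu$, a pairwise-independent latent-variable construction for $\nu$), but within the class of zero-covariance measures the behavior of $\Delta_n$ is completely determined by the marginals, so no separation is possible. Two observations force this. First, equality of covariance matrices already forces equal marginals (the diagonal gives $\E[X_j^2]=p_j$), and the upper bounds \eqref{eq:tight_bound_independent_1}--\eqref{eq:tight_bound_independent_2} hold for \emph{arbitrary} dependence structures, since each $\bar Y_j$ depends only on the marginal $p_j$ and the bounds use only a union bound over $j$ plus marginal concentration (this is the ``decoupling from above'' remark after Corollary~\ref{cor:neg-cor}). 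Hence if you choose marginals with $T(p)<\infty$ so that your product $\mu$ converges, then \emph{every} $\nu$ with those marginals satisfies $\Delta_n(\nu)\to 0$, killing the desired $\Delta_n(\nu)\to\tfrac12$. Second, in the other direction, pairwise independence means $\E[X_iX_j]=\E[X_i]\E[X_j]$, so \cref{thm:neg-corr} (decoupling from below) applies and gives $\Delta_n(\nu)\asymp\Delta_n(\tilde\nu)$, i.e.\ the Walsh/Hadamard-type $\nu$ behaves exactly like its product version up to constants; so if instead you take marginals $\equiv\tfrac12$ to make $\nu$ diverge, then $\mu$ (the product measure with the same covariance) diverges as well. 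In short, the higher-order $\mathbb{F}_2$-relations you want to exploit cannot change whether $\Delta_n\to 0$ once the covariance is zero; the counterexample must live in a regime of \emph{strong positive} correlation, where neither the marginal-only upper bound nor the decoupling theorem pins down the behavior.

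This is what the paper's construction does. All marginals are $\tfrac12$, $\N$ is cut into huge blocks $\Scal_k=\{2^{(k-1)^3}<t\le 2^{k^3}\}$, and within a block the coordinates are copies of a common $Z_0\sim\Bernu(\tfrac12)$ except on rare ``switch'' events of total probability $2^{-k}$. For $\mu$ the switch is a \emph{single} block-level variable $B_k\sim\Bernu(2^{-k})$ ($X_t^\mu=(1-B_k)Z_0+B_kZ_t$), so summability of $\sum_k 2^{-k}$ controls all blocks at once and $\Delta_n(\mu)\to 0$. For $\nu$ the same total switch probability is split between a block-level part $D_kY_k$ and \emph{independent per-coordinate} switches $C_t$, with parameters $\gamma_k,\delta_k$ solving two equations so that both within-block and cross-block second moments match those of $\mu$ exactly. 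Because $|\Scal_n|$ is doubly exponential while $(\gamma_n/2)^n$ is only exponentially small, with probability tending to $1$ some coordinate $i\in\Scal_n$ has $C_i^{(j)}=Z_i^{(j)}=1$ for all $n$ samples, so $\pn[i]\approx 1$ while $\p[i]=\tfrac12$, giving $\Delta_n(\nu)\to\tfrac12$ (the matching upper bound is trivial since all deviations are at most $\tfrac12$, so your worry about calibrating the exact limit evaporates once the marginals are $\tfrac12$). If you want to salvage your proposal, the latent-variable template must be redirected toward this positively correlated regime rather than toward pairwise independence.
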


The proof is given in \cref{sec:proof_thm_3}. It turns that the two distributions we construct also share the exact same third-order moments; that is,
\beq
\E_{X\sim\mu}[X_i X_j X_k ] &=& \E_{X\sim\nu}[X_i X_j X_k], \qquad i,j,k\in\N.
\eeq
In particular, this shows that third-order moments still fail to
characterize the convergence of $\Delta_n$ to $0$
(the proof of this claim also appears in \cref{sec:proof_thm_3}, see Proposition~\ref{prop:3rd_order_moments}).
We conjecture that similar examples could be constructed to show that for any $k\geq 1$, knowledge of the $k$th order moments does not suffice to characterize the decay of $\Delta_n$.

Although a complete understanding of the behavior of $\Delta_n$ as a function of $\mu$ so far remains out of reach,
we take nontrivial steps in this direction by providing upper and lower bounds in terms of the covering numbers. To state our bounds, we fix a probability measure $\mu$ on $\set{0,1}^\N$. We define the metric spaces $(\N,\xi)$ and $(\N,\rho)$ 
as 
in
Eq~\eqref{eq:xi-metric} and \eqref{eq:rho-metric}. For $\eps\in(0,1]$, we will denote by $\Ncal_\xi(\eps)$ and $\Ncal_\rho(\eps)$ the $\eps$-covering numbers of $(\N,\xi)$ and $(\N,\rho)$ respectively. Recall that a subset of a metric space is {\em totally bounded}
if its $\eps$-covering numbers are finite for each $\eps>0$.

We start by showing that 
total boundedness
is a necessary condition for convergence. The proof is deferred to \cref{sec:xi_covering_numbers}.

\begin{theorem}\label{thm:finite_cov_num}
    Let $\mu$ be a distribution on $\{0,1\}^\N$ such that $(\N,\xi)$ is not totally bounded and let $\eps\in(0,1]$ such that $\Ncal_\xi(\eps)=\infty$.  Then, for all $n\geq 1$,
    \begin{equation*}
        \Delta_n(\mu) \geq \frac{\eps^2}{6}.
    \end{equation*}
    Hence, if $\Delta_n(\mu)\to 0$, then $(\N,\xi)$ is totally bounded.
\end{theorem}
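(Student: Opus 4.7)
The plan is to prove $\Delta_n(\mu) \ge \eps^2/6$ by contradiction, combining a Markov's inequality argument with an iterated pigeonhole extraction on the empirical means.

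First I would extract an infinite $\eps$-separated sequence $(i_k)_{k\ge 1}\subset \N$ witnessing $\Ncal_\xi(\eps)=\infty$. Using the two elementary inequalities $\xi(i,j)\le \p[i]+\p[j]$ and $\xi(i,j)\le 2-\p[i]-\p[j]$, at most one of the $\p[i_k]$ can lie in $[0,\eps/2]$ and at most one in $[1-\eps/2,1]$; discarding these two exceptional indices I may assume $\p[i_k]\in[\eps/2,1-\eps/2]$ for every $k$.

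Next, suppose for contradiction that $\Delta_n<\eps^2/6$. Markov's inequality applied to the nonneg random variable $\sup_k|Y_{i_k}|$ gives
\[
\Pbb\!\left(\sup_k |Y_{i_k}| \ge \tfrac{\eps}{3}\right) \;\le\; \frac{\Delta_n}{\eps/3} \;<\; \frac{\eps}{2},
\]
so that the good event $G:=\{\sup_k|Y_{i_k}|<\eps/3\}$ satisfies $\Pbb(G)>1-\eps/2$. On $G$, every empirical mean $\bar X_{i_k}$ is pinned inside $(\p[i_k]-\eps/3,\p[i_k]+\eps/3)$.

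The heart of the argument is the recursive pigeonhole extraction advertised in the introduction. On $G$, each $\bar X_{i_k}$ takes values in the finite set $\{0,1/n,\ldots,1\}$, so there exists a value $v\in\{0,1/n,\ldots,1\}$ such that $S(v):=\{k:\bar X_{i_k}=v\}$ is infinite; the sub-packing $(i_k)_{k\in S(v)}$ is still pairwise $\eps$-separated in $\xi$, yet $|\bar X_{i_k}-\p[i_k]|<\eps/3$ now forces every such $\p[i_k]$ into the $2\eps/3$-wide interval around $v$. The plan is to iterate this extraction: at each successive level I pigeonhole on a finer empirical statistic (for example on the full pattern $(X_{i_k}^{(1)},\ldots,X_{i_k}^{(n)})\in\{0,1\}^n$, or on suitable pairwise patterns), producing a nested tower of infinite sub-packings whose marginals live in ever-narrower intervals and whose covariance sub-matrices must remain positive semidefinite. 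The $\xi>\eps$ constraint combined with PSD of the all-ones eigenvalue $p(1-p)+(M-1)\overline{\Cov}\ge 0$ for $M\to\infty$ progressively squeezes the admissible pairwise correlations. After a bounded number of levels the compounded constraints become jointly infeasible with the infinite packing, yielding the desired contradiction. The thresholds $\eps/2$ (Markov budget) and $\eps/3$ (deviation level) are chosen so that their product matches the target constant $\eps^2/6$.

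The main obstacle will be quantitatively closing the recursion. A single pigeonhole round only localizes the value $v$ to $(\eps/6,1-\eps/6)$, which is still compatible with the grid $\{0,1/n,\ldots,1\}$ for every $n\ge 2$, so the extraction cannot terminate at one step. The delicate part is identifying, at each level, both the right empirical statistic to pigeonhole on and a decomposition of $\Pbb(G)$ into nested sub-events that keeps enough probability mass while tightening the constraint on the joint distribution of the remaining sub-packing, and then bounding the number of levels needed to reach infeasibility. This iterative squeezing is exactly what the introduction advertises as the ``recursive argument to extract components with large deviations''.
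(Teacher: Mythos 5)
Your setup (extracting an infinite $\eps$-separated family, discarding the at most two indices with extreme means, and the Markov reduction to a good event on which all deviations are below $\eps/3$) is fine, but the heart of your argument is missing, and the mechanism you sketch for closing it cannot work as stated. You propose to iterate a pigeonhole on empirical statistics and to reach a contradiction because ``the $\xi>\eps$ constraint combined with PSD of the all-ones eigenvalue \dots becomes jointly infeasible.'' There is no infeasibility to be found at that level: pairwise positive semidefiniteness together with $\xi(i,j)>\eps$ is perfectly consistent --- independent $\Bernu(\frac12)$ coordinates give an infinite $\eps$-separated family with zero covariances --- and pigeonholing on the full sample pattern in $\{0,1\}^n$ always succeeds pointwise (some pattern repeats infinitely often for every realization), so it produces no contradiction either. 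More fundamentally, the theorem is not an impossibility statement about moment structures; what has to be ruled out is that \emph{all} empirical means stay near their expectations simultaneously with high probability, and that failure is intrinsically probabilistic rather than a consequence of constraints on the covariance matrix. Your own caveat (``the delicate part is identifying \dots and then bounding the number of levels needed to reach infeasibility'') is exactly the unproved step, and the matching of $\eps/2\cdot\eps/3$ with $\eps^2/6$ is numerology rather than an argument.

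The paper's proof supplies the missing probabilistic engine, and it is worth seeing why it is of a different nature from your within-sample pigeonhole. The key single-sample fact is that for \emph{any} infinite $\eps$-separated set $S$, the event $\Ecal(S)$ that the realization $(X_i)_{i\in S}$ contains infinitely many $0$s and infinitely many $1$s has probability at least $\eps$; this follows from a reverse-Fatou bound
\begin{equation*}
\Pbb(\Ecal(S)) \;\ge\; \limsup_{j\to\infty}\Pbb\bigl(X_{i_j}\neq X_{i_{j+1}}\bigr)\;\ge\;\eps .
\end{equation*}
The recursion is then run \emph{across the $n$ iid samples}, not across levels of refinement within a fixed sample: after a geometric waiting time of mean at most $1/\eps$ the event $\Ecal$ fires, and one passes to the infinite subset of surviving indices whose realized value is $0$ (or $1$, depending on a case split comparing the conditional ``consensus'' mean $\bar p(S)$ to $p$), so that some surviving index accumulates a deviation of order $\p[i]$ once per $\approx 1/\eps$ samples. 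A Wald-type bound shows the expected number of such firings by time $n$ is at least $n\eps/3$, giving $\E[\,|\pn[i_n]-\p[i_n]|\,]\gtrsim \eps\cdot\p[i_n]\gtrsim\eps^2$, i.e.\ a direct lower bound $\Delta_n(\mu)\ge\frac{\eps}{6}(\eps-\eta)-\frac{\eta}{2}$ for every $\eta>0$ --- no contradiction argument is needed. To repair your proposal you would have to replace the PSD/infeasibility step with some quantitative statement of this kind: that fresh samples keep producing disagreements inside every infinite separated subfamily, and that one can track a single coordinate whose empirical mean is thereby forced to drift by $\Theta(\eps^2)$.
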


The proof uses the following idea: given an $\eps$-separated subset of $A\subset\N$, that is, 
one for which $\xi(i,j)=\Pbb(x_i\neq X_j)>\eps$ for all $i,j\in A$, 
we can show that the event that 
$\set{X_i:i\in A}$
contains infinitely many realizations
of both $0$ and $1$
occurs with probability at least $\eps$. 

The proof then constructs a sequence of decreasing (random) infinite sets $S(n)\subset \N$ for $n\geq 1$ such that on the first $n$ iid samples $X^{(1)},\ldots,X^{(n)}$, all components $i\in S(n)$ diverged from their mean significantly, $|\pn[i]-\p[i]|\gtrsim \eps \p[i]$. These sets can be constructed recursively as follows. Given an infinite set $S(n)\subset A$, we can still apply the initial result to $S(n)$ because it is still $\eps$-separated and infinite. Hence, for any $j>n$, with probability at least $\eps$, the sequence $\{X_i^{(j)}:i\in S(n)\}$ contains infinitely many realizations of both $0$ and $1$. Whenever this event occurs for some samples $n'>n$ (after a geometric waiting time $n'-n\sim \Gcal(\eps)$), we can either select the components $S(n')\subset S(n)$ as those with realization $0$, or those with realization $1$, whichever induces maximal deviation from the means. In both cases, $S(n')$ is still infinite hence we can continue the induction. The deviation on some component $i\in S(n')$ is of order $\p[i]$ after a waiting time $n'-n\approx 1/\eps$, which corresponds to a $\approx \p[i]\eps$ deviation in average; this can be replaced with $\approx \eps^2$ with additional technicalities.

Although 
the necessary condition in
Theorem~\ref{thm:finite_cov_num}
might at first appear somewhat weak,
it turns out to be 
essentially
the best one can do 
solely based on
covering-number
information:

\begin{proposition}\label{prop:ex_fast_converging}
    Let $N:(0,1]\to\Nbb$ be a non-increasing function. Then, there exists a distribution $\mu$ on $\{0,1\}^\N$ and a countable set $E$ such that for any $\eps\in(0,\frac{1}{2}]\setminus E$, one has $\Ncal_\xi(\eps) = N(\eps)$, and
    \begin{equation*}
        \Delta_n(\mu)\ninf 0.
    \end{equation*}
\end{proposition}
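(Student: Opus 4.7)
My plan is to construct $\mu$ via a level-based distribution on $\N$: the induced ultrametric $\xi$ realizes the prescribed covering profile while a shared seed drives $\Delta_n\to 0$. Let $\eps_1>\eps_2>\cdots$ enumerate the jump points of $N$ (WLOG $\eps_k\to 0$; else $N$ is eventually constant and the construction reduces to a finite collection of indicators), set $m_k:=N(\eps_k)$ with $m_0:=1$, and take $E:=\{\eps_k\}$. Partition $\N$ into levels with $m_k-m_{k-1}$ new indices at level $k$, and write $k(j)$ for the level of index $j$.

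Introduce a seed $Y\sim\mathrm{Bernoulli}(1/2)$ and a nested sequence of events $E_1\supset E_2\supset\cdots$ with $\Pbb(E_k)=2\eps_k$; for each $j$, let $W_j\sim\mathrm{Bernoulli}(1/2)$ be fresh and independent. Define $X_j:=W_j$ on $E_{k(j)}$ and $X_j:=Y$ otherwise. A short calculation gives $\xi(i,j)=\eps_{\min(k(i),k(j))}$ for $i\neq j$, so $(\N,\xi)$ is an ultrametric. For $\eps\in(\eps_{k+1},\eps_k]$, the $\eps$-balls are the $m_k-1$ singletons at levels $\leq k$ together with the single cluster of all level-$>k$ indices, giving $\Ncal_\xi(\eps)=m_k=N(\eps)$ on $(0,\tfrac12]\setminus E$.

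For convergence, use the decomposition
\begin{equation*}
\bar X_j^{(n)} - \tfrac12 \;=\; \bigl(\bar Y^{(n)} - \tfrac12\bigr) \;+\; \frac1n\sum_{i \in S_{k(j)}}\bigl(W_j^{(i)} - Y^{(i)}\bigr),
\end{equation*}
where $S_k:=\{i:E_k\text{ occurs in sample }i\}\sim\mathrm{Bin}(n,2\eps_k)$. The first term is common to all $j$ and vanishes at rate $n^{-1/2}$; the second is a mean-zero, bounded, level-$k$ noise supported on $|S_k|\approx 2n\eps_k$ samples. A sub-Gaussian maximal inequality at each level, combined with the observation that only levels $k$ with $n\eps_k\gtrsim 1$ are ``active''—deeper levels have $|S_k|=0$ with high probability by Borel--Cantelli—yields $\Delta_n\to 0$. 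The main obstacle is handling $N$ growing so rapidly that the naive sum $\sum_k\sqrt{\eps_k\log m_k/n}$ diverges; this is resolved by coupling the $W_j$'s hierarchically through the ultrametric tree induced by the levels, so that they form a sub-Gaussian process with bounded $\gamma_2$ functional, replacing Dudley's bound by the sharper $\gamma_2$ bound and securing $\Delta_n\to 0$ for every finite-valued $N$.
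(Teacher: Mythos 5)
Your construction and your covering-number computation essentially reproduce the paper's (nested events $E_k$ with $\Pbb(E_k)=2\eps_k$, a shared seed, fresh fair coins per coordinate; the only slip is that $m_k$ should be the value of $N$ on the open interval between consecutive jumps rather than $N(\eps_k)$ itself, since $N$ need not be continuous from either side at a jump). The genuine gap is the convergence step. The per-level plan (a sub-Gaussian maximal inequality at each level plus Borel--Cantelli to deactivate deep levels) does not close: as you admit, $\sum_k\sqrt{\eps_k\log m_k/n}$ can diverge, and since $\sum_k\eps_k$ may itself be infinite, the union bound behind ``deep levels have $S_k=\emptyset$ simultaneously'' also fails unless one uses nestedness, at which point the level-by-level analysis is unnecessary. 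The proposed rescue is not viable: the $\gamma_2$ functional with respect to the subgaussian metric $\rho$ of Lemma~\ref{lem:pqr-mgf} is a purely metric quantity, and any metric space with the prescribed covering numbers satisfies $\gamma_2(\N,\rho)\gtrsim\sup_{\eps}\sqrt{\log \Ncal_\xi(\eps)/\log(2/\eps)}$ (Sudakov-type minoration applied to the $N(\eps)$ points that are pairwise $\eps$-separated in $\xi$, hence at $\rho$-distance $\asymp (\log(2/\eps))^{-1/2}$); this is infinite whenever $N$ grows quickly, so no re-coupling of the $W_j$'s compatible with $\Ncal_\xi=N$ can make $\gamma_2$ finite. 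Worse, any chaining bound would yield $\Delta_n\lesssim 1/\sqrt n$, which is provably false for this family: with $\eps_k=1/k$ and $m_k=2^{2^k}$, at level $k\approx\log_2 n$ there are enough independent coordinates that with high probability one of them equals $1$ on every sample of $S_k$, giving a deviation of order $|S_k|/n\asymp\eps_k\asymp 1/\log n$.

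The fix is elementary and is exactly the paper's route: exploit the nestedness globally rather than level by level. For any fixed $k_0$, every coordinate $j$ with $k(j)\geq k_0$ agrees with the seed $Y$ outside $E_{k_0}$, so simultaneously over all such $j$ one has $|\hat p_j-\tfrac12|\leq|\bar Y^{(n)}-\tfrac12|+|S_{k_0}|/n$, whose expectation is at most $\Ebb|\bar Y^{(n)}-\tfrac12|+2\eps_{k_0}$; the finitely many remaining coordinates (those at levels $<k_0$) contribute $o(1)$ as $n\to\infty$. Hence $\limsup_n\Delta_n(\mu)\leq 2\eps_{k_0}$ for every $k_0$, which gives $\Delta_n(\mu)\to0$ with no chaining at all (and correctly so, since the true rate is in general much slower than $1/\sqrt n$). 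The paper packages this same observation through Theorem~\ref{thm:more_general_suff_condition}, applied with $K=1$, the single covering event $\Ecal_{k}$, and the finite set of centers $[N_k]$.
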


The proof is given in \cref{sec:xi_covering_numbers}. The previous result shows that if $\mu$ satisfies the necessary condition from \cref{thm:finite_cov_num}, one can construct a distribution $\tilde\mu$ on $\{0,1\}^\N$ whose covering numbers agree with those of $\mu$ (up to a negligible countable set)
for which $\Delta_n(\tilde \mu)$ does converge to zero. Here, the 
exceptional set
$E$ exactly corresponds to the scales $\eps$ on which the covering number function $N(\cdot)$ is discontinuous: for the constructed distribution $\mu$, the covering numbers are right-continuous, which may not necessarily be the case for a generic covering number function $\Ncal_\xi(\cdot)$.

In light
of \cref{thm:2procs}, there is no loss of generality in assuming
suppose that all covering numbers $\Ncal_\xi(\eps)$ (a fortiori for $\Ncal_\rho(\eps)$) are finite.
In this case,
we have the following sufficient condition for the convergence of $\Delta_n(\mu)$ to $0$, written in terms of the $\xi$-covering numbers.

\begin{theorem}\label{thm:sufficient_condition_covering_nb}
    Let $\mu$ be a distribution on $\{0,1\}^\N$ such that
    \begin{equation}\label{eq:sufficient_condition}
        C_\mu:= \int_0^1 \Ncal_\xi(\eps)\mathd\eps <\infty.
    \end{equation}
    Then, $\Delta_n(\mu)\to 0$ as $n\to\infty$. Further, in that case, for any $n\geq 1$,
    \begin{equation*}
        \Delta_n(\mu) \leq C_1  \inf_{\eps\in(0,1]} \eps + \sqrt{\frac{\log (\Ncal_\xi(\eps)+1)}{n}} \leq C_2\paren{   1\land \sqrt{\frac{\log(n(1+C_\mu))}{n}}   },
    \end{equation*}
    for universal constants $C_1,C_2>0$.
\end{theorem}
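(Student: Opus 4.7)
My plan is to prove both inequalities via a dyadic chaining argument in the pseudo-metric space $(\N, \xi)$, driven by the subgaussian increment estimate of Lemma~\ref{lem:pqr-mgf} and the integrability hypothesis $C_\mu < \infty$. Concretely, Lemma~\ref{lem:pqr-mgf} implies that for any $i, j \in \N$, the centered increment $\bar Y_i - \bar Y_j$ satisfies a Bernstein-type MGF estimate with variance proxy of order $\xi(i,j)/n$; applied to a finite set $S$ of pairs with pairwise $\xi$-distance at most $\sigma^2$, the standard subgaussian maximum inequality gives $\E\max_{(i,j)\in S}|\bar Y_i-\bar Y_j|\lesssim \sqrt{\sigma^2\log(|S|+1)/n}$.

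Fix $\eps\in(0,1]$. For each $k\ge0$, take $A_k$ to be a nested minimal $\eps 2^{-k}$-net of $(\N,\xi)$, with $|A_k|=\Ncal_\xi(\eps 2^{-k})$, and let $\pi_k:\N\to A_k$ map each index to its nearest point. The telescoping identity $\bar Y_j=\bar Y_{\pi_0(j)}+\sum_{k\ge0}(\bar Y_{\pi_{k+1}(j)}-\bar Y_{\pi_k(j)})$ yields
\begin{equation*}
\E\sup_j|\bar Y_j|\;\le\;\E\sup_{i\in A_0}|\bar Y_i|\;+\;\sum_{k\ge0}\E\max_{(a,b)\in P_k}|\bar Y_a-\bar Y_b|,
\end{equation*}
where $P_k$ has cardinality at most $\Ncal_\xi(\eps 2^{-k-1})$ and its pairs have $\xi$-distance $\le 3\eps 2^{-k-1}$. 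The coarse term is bounded by $\lesssim \sqrt{\log(\Ncal_\xi(\eps)+1)/n}$, and the $k$-th summand by $\lesssim\sqrt{\eps 2^{-k}\log(\Ncal_\xi(\eps 2^{-k-1})+1)/n}$.

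The crux is to collapse the chaining sum to $\lesssim\eps$. Monotonicity together with $\int_0^1\Ncal_\xi(u)\,du=C_\mu$ implies $\Ncal_\xi(u)\le C_\mu/u$, and therefore $\log\Ncal_\xi(\eps 2^{-k-1})\lesssim\log(C_\mu/\eps)+k$. A geometric-series estimate then gives $\sum_k\sqrt{\eps 2^{-k}/n}\cdot\sqrt{\log(C_\mu/\eps)+k}\lesssim\sqrt{\eps\log(C_\mu/\eps)/n}$, and applying the elementary inequality $\sqrt{\eps L/n}\le\eps+L/n$ with $L=\log(C_\mu/\eps)$ absorbs this into $\eps$ plus a $\log/n$ term dominated by the coarse contribution (in the regime where the bound is non-trivial; when $n<\log(C_\mu/\eps)$ one uses the trivial bound $\Delta_n\le1$). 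Combining yields the first inequality $\Delta_n\lesssim\eps+\sqrt{\log(\Ncal_\xi(\eps)+1)/n}$. Taking the infimum and choosing $\eps=1/n$ (so that $\Ncal_\xi(1/n)\le nC_\mu$) gives $\Delta_n\lesssim\sqrt{\log(n(1+C_\mu))/n}$, the second inequality. Convergence $\Delta_n\to 0$ follows since both $\eps$ and $\sqrt{\log(\Ncal_\xi(\eps)+1)/n}$ can be driven to zero by first sending $n\to\infty$ and then $\eps\to0$.

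The main obstacle is the final book-keeping step: getting the chaining sum to collapse cleanly to $\eps+\sqrt{\log\Ncal_\xi(\eps)/n}$ rather than $\sqrt{\eps\log(C_\mu/\eps)/n}+\sqrt{\log\Ncal_\xi(\eps)/n}$ requires the AM-GM split above together with a careful regime analysis, so that no extra $\log(C_\mu/\eps)$ factor remains in the statement. The ingredient that makes this work is precisely $C_\mu<\infty$, which controls the tail $\Ncal_\xi(u)\lesssim C_\mu/u$ and thereby allows the geometric series in $k$ to converge despite the unbounded growth of $\log\Ncal_\xi(\eps 2^{-k})$ as $k\to\infty$.
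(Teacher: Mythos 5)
There is a genuine gap, and it sits exactly at the step you call the crux. First, Lemma~\ref{lem:pqr-mgf} does \emph{not} give a subgaussian increment with variance proxy of order $\xi(i,j)/n$: its bound is $\exp\bigl(t^2/\log\tfrac{2}{\xi(i,j)}\bigr)$, i.e.\ a per-sample variance proxy of order $1/\log(2/\xi(i,j))\asymp\rho(i,j)^2$, which is exponentially larger than $\xi(i,j)$ when $\xi(i,j)$ is small. Chaining with the metric the lemma actually provides is precisely Proposition~\ref{prop:dudley_thm}, and it requires $\int_0^1\sqrt{\log\Ncal_\rho(\eps)}\,\mathd\eps<\infty$, a strictly stronger hypothesis than $C_\mu<\infty$ (the paper notes this at the end of Section~\ref{sec:subgaussian}). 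If you instead use the true variance of $X_i-X_j$ (which is at most $\xi(i,j)$, since the difference lies in $\{-1,0,1\}$), the increments are only of Bernstein type, not subgaussian at scale $\sqrt{\xi(i,j)/n}$: the maximal inequality for the $k$-th chaining level then carries an additional linear term of order $\log\bigl(\Ncal_\xi(\eps 2^{-k-1})+1\bigr)/n$, and since $\log\Ncal_\xi(\eps 2^{-k})$ grows at least linearly in $k$ whenever $\N$ is infinite in $(\N,\xi)$, these terms do not sum over $k$. So the dyadic chaining series you propose does not collapse to $\eps+\sqrt{\log(\Ncal_\xi(\eps)+1)/n}$; the AM--GM bookkeeping cannot repair a divergent series.

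The paper's proof uses the hypothesis $C_\mu<\infty$ in a structurally different, first-moment way that exploits the binary coordinates: writing the condition as $\sum_k 2^{-k}\Ncal_\xi(2^{-k})<\infty$, a union bound shows that with probability at least $1-\eps$ \emph{every} chain link below scale $2^{-k_\eps}$ agrees exactly ($X_i=X_{\hat i}$), because each link differs with probability equal to its $\xi$-length and the total weighted length is the tail of $C_\mu$. On this good event (plus a Borel--Cantelli argument for indices outside the nets) the whole process coincides with its projection onto the finite set $\Scal_\eps$, giving $|\pn[i]-\p[i]|\le \frac1n\sum_m\1[\text{bad sample }m]+\sup_{j\in\Scal_\eps}|\pn[j]-\p[j]|+2\eps$; the finite-set supremum is then bounded by the finite-dimensional result of \citet[Corollary 3]{blanchard2023tight}, which produces the $\sqrt{\log(\Ncal_\xi(\eps)+1)/n}$ term, and the choice $\eps_{\mu,n}\asymp\sqrt{\log(n(1+C_\mu))/n}$ together with $\Ncal_\xi(\eps)\le C_\mu/\eps$ gives the second inequality. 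This $L^1$ ``exact coincidence'' chaining is exactly what makes the first-power integral $\int_0^1\Ncal_\xi(\eps)\,\mathd\eps$ the right condition; a second-moment (subgaussian/Bernstein) chaining of the kind you outline inherently produces conditions of Dudley type and cannot reach the stated theorem under the stated hypothesis.
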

The proof, which we defer to \cref{sec:xi_covering_numbers}, proceeds via a chaining
argument.
We consider the directed graph such that a node
in $\Scal_\xi(2^{-k})$ (covering set at level $2^{-k}$) has as parent its nearest-neighbor within $\Scal_\xi(2^{-k+1})$. 
We show that the property $C_\mu<\infty$ ensures that with good probability, starting for any point at level $k>k_0$, the value at the node and its parent coincide exactly. This overall probability grows as $k_0$
is made larger. Conditional on this good event,
it suffices to bound the deviation of components in $\Scal_\xi(2^{-k})$ for $k\leq k_0$, 
for which convergence
is trivial due to the finite
number of points.

We note that the non-asymptotic upper bounds for $\Delta_n(\mu)$ of the previous result are in general not tight. In particular, under stronger assumptions, we can use tools from Gaussian process theory to exhibit subgaussian rates of convergence ($1/\sqrt n$ instead of $\sqrt{\log n/n}$) for $\Delta_n(\mu)$. More precisely, we show that $X\sim\mu$ is a subgaussian process on $\N$ with respect to the metric $\rho$ (Lemma~\ref{lem:pqr-mgf}). Hence,
a direct application of
Dudley's theorem
\citet[Corollary 5.25]{van2014probability}
yields
the following result.

\begin{proposition}\label{prop:dudley_thm}
    Let $\mu$ be a distribution on $\{0,1\}^\N$ such that
    \begin{equation*}
        D_\mu:= \int_0^1 \sqrt{\log \Ncal_\rho(\eps)} \mathd\eps <\infty.
    \end{equation*}
    Then, for any $n\geq 1$, we have
    \begin{equation*}
        \Delta_n(\mu) \leq \frac{24 D_\mu}{\sqrt n}.
    \end{equation*}
\end{proposition}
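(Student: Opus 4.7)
The plan is to recognize $\Delta_n(\mu) = \E\sup_{i \in \N}|Z_i|$, where
\[ Z_i \;:=\; \frac{1}{n}\sum_{k=1}^n X_i^{(k)} - \p[i] \]
is the centered empirical deviation at coordinate $i$, as the supremum of a centered subgaussian process on $(\N, \rho/\sqrt n)$, and then invoke the Dudley entropy bound \citep[Corollary 5.25]{van2014probability} directly.

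To establish subgaussian increments, I would apply Lemma~\ref{lem:pqr-mgf}. By the specific tuning of $\rho$ in \eqref{eq:rho-metric}, this lemma supplies the one-sample MGF estimate
\[ \E \exp\!\bigl(\lambda\,[(X_i - X_j) - (\p[i] - \p[j])]\bigr) \;\leq\; \exp\!\bigl(\tfrac{\lambda^2 \rho(i,j)^2}{2}\bigr), \qquad \lambda \in \R, \]
so that the centered single-sample increment is $\rho(i,j)^2$-subgaussian. By independence of the $n$ samples and the standard sum rule for centered subgaussians, $\E \exp(\lambda(Z_i - Z_j)) \leq \exp(\lambda^2 \rho(i,j)^2/(2n))$, so $(Z_i)_{i \in \N}$ is a subgaussian process with respect to the metric $\rho/\sqrt n$.

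With the process structure in hand, I would apply \citep[Corollary 5.25]{van2014probability} and use the covering-number scaling $\Ncal_{\rho/\sqrt n}(\eps) = \Ncal_\rho(\eps \sqrt n)$, followed by a change of variables, to obtain
\[ \E \sup_{i,j \in \N}|Z_i - Z_j| \;\leq\; 12 \int_0^\infty \sqrt{\log \Ncal_{\rho/\sqrt n}(\eps)}\,\mathd\eps \;=\; \frac{12}{\sqrt n}\int_0^\infty \sqrt{\log \Ncal_\rho(\eps)}\,\mathd\eps \;\leq\; \frac{12\,D_\mu}{\sqrt n}. \]
The final step, passing from $\sup_{i,j}|Z_i - Z_j|$ to $\sup_i |Z_i|$, fixes a reference index $i_0$ and uses $\sup_i |Z_i| \leq |Z_{i_0}| + \sup_i |Z_i - Z_{i_0}| \leq |Z_{i_0}| + \sup_{i,j}|Z_i - Z_j|$, together with the elementary bound $\E|Z_{i_0}| = O(1/\sqrt n)$ coming from $|X_i^{(k)} - \p[i]| \leq 1$. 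The lower-order term is absorbed, and the factor $24 = 2 \cdot 12$ in the proposition further accommodates the signed-to-absolute passage by applying Dudley's bound symmetrically to both $\pm Z$.

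The only substantive step is Lemma~\ref{lem:pqr-mgf}, whose entire purpose is to deliver the subgaussian MGF estimate with variance proxy exactly $\rho(i,j)^2$ under the nonstandard definition \eqref{eq:rho-metric}. Everything else is a mechanical application of the cited Dudley theorem and elementary subgaussian algebra, so this proposition is essentially a corollary once the correct metric $\rho$ has been identified.
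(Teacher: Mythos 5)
Your argument is essentially the paper's own proof: Lemma~\ref{lem:pqr-mgf} supplies the subgaussian increments of $\hat p - p$ with respect to the rescaled metric $\rho/\sqrt{n}$, and the bound then follows from Dudley's entropy integral (van Handel, Corollary 5.25) together with the change of variables $\varepsilon \mapsto \varepsilon\sqrt{n}$, which is exactly the paper's one-line derivation. The only difference is bookkeeping: the paper applies the cited corollary with constant $24$ directly to $\E\sup_{i}|\hat p_i - p_i|$, whereas you reconstruct the tensorization over samples and the signed-to-absolute, reference-point step by hand.
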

The proof that $X\sim\mu$ is a subgaussian process (Lemma~\ref{lem:pqr-mgf}) 
along with
the proof of the above proposition are deferred to \cref{sec:subgaussian}.

In light of \cref{thm:2procs}, the condition from Eq~\eqref{eq:sufficient_condition} cannot also be a necessary condition for
$\Delta_n(\mu)\ninf 0$, 
since it only involves pairwise correlations between coordinates of the distribution. However, we show that this is also essentially the tightest sufficient condition that 
can be stated solely in terms of the
covering numbers $\Ncal_\xi(\cdot)$. The proof can be found in \cref{sec:xi_covering_numbers}.

\begin{proposition}\label{prop:example_covering_nb}
    Let $N:(0,1]\to\N$ be a non-increasing function such that
    \begin{equation*}
        \int_0^{1/2} N(\eps)\mathd\eps  = \infty.
    \end{equation*}
    Then, there exists a distribution $\mu$ on $\{0,1\}^\N$ and a countable set $E$ such that for any $\eps\in (0,\frac{1}{2}]\setminus E$, one has $\Ncal_\xi(\eps) = N(\eps)$, and for any $n\geq 1$,
    \begin{equation*}
        \Delta_n(\mu) = \frac{1}{2}.
    \end{equation*}
\end{proposition}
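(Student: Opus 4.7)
The approach is constructive: I build $\mu$ as a tree-structured distribution on $\{0,1\}^\N$ via a Markov broadcast process, as suggested by the introduction's remark on ``probability distributions with tree structures.'' Let $\{\eps_k\}_{k\ge 1}$ enumerate the discontinuity points of the non-increasing step function $N$, forming a countable set which I take to be $E$; writing $N(\eps) = N_k$ for $\eps\in(\eps_{k+1},\eps_k]$, the hypothesis $\int_0^{1/2} N = \infty$ forces $N_k\uparrow\infty$. Construct a rooted tree $T$ whose leaves are identified with $\N$: at each level $k$, introduce $N_k - N_{k-1}$ new internal ``cover-center'' nodes, each with infinitely many leaf descendants (possibly via deeper subtrees). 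Define $\mu$ via the broadcast: $X_r\sim\Bernu(1/2)$ at the root, and along each edge $e$ of $T$, the child's bit equals the parent's with probability $1-q_e$ or is a fresh independent $\Bernu(1/2)$ with probability $q_e$. The $q_e$'s are calibrated so that the induced $\xi$-metric realizes the tree metric, yielding $\Ncal_\xi(\eps) = N(\eps)$ for $\eps\in(0,1/2]\setminus E$ by a direct computation: cover balls at scale $\eps\in(\eps_{k+1},\eps_k]$ correspond exactly to the $N_k$ subtrees rooted at level-$k$ cover centers. Since all marginals are $\Bernu(1/2)$, we also have $|\bar Y_j|\le 1/2$ deterministically, so $\Delta_n(\mu)\le 1/2$ is automatic.

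The content of the proof is the matching lower bound, equivalently $\Pbb(\exists j : X_j^{(1)} = \cdots = X_j^{(n)}) = 1$ for every $n\ge 1$, so that $\sup_j |\bar Y_j| = 1/2$ almost surely. The strategy is a conditional Borel--Cantelli argument targeting fine scales of the tree: because $N_k\uparrow\infty$, infinitely many new cover-centers appear as $k$ grows, and to each one I attach an infinite family of leaves governed by edge flips that are independent of each other and of the rest of the tree. Conditioning on the broadcast above some depth cutoff, the events $A_v = \{X_v^{(1)} = \cdots = X_v^{(n)}\}$ for these fresh leaves $v$ are conditionally independent, each with conditional probability bounded below by a positive quantity determined by $n$ and the local flip rate $q_e$. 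The conditional second Borel--Cantelli lemma then forces almost surely infinitely many of the $A_v$'s to hold.

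The main obstacle is reconciling two opposing demands: the flip probabilities $q_e$ must be small to keep the $\xi$-diameter of each subtree below the relevant cover scale, yet the conditional probabilities $\Pbb(A_v \mid \text{coarse})$ can decay like $q_e^m$ as $q_e\to 0$ on the event that the coarse bits $X_c^{(i)}$ are not all equal (which happens with probability $1-2^{1-n}$). Overcoming this tension is the heart of the construction: by allocating sufficiently many independently-flipped leaves per cover-center, one compensates for the small per-leaf conditional probability, forcing the conditional series to diverge. The integral condition $\int_0^{1/2} N(\eps)\mathd\eps = \infty$ is precisely what guarantees enough cumulative branching for this to succeed: by Abel summation, $\int_0^{1/2} N = \sum_k (N_k - N_{k-1})\eps_k$, and this controls exactly the series that must diverge for the conditional Borel--Cantelli step to conclude $\Pbb(\bigcup_v A_v) = 1$ for every $n$.
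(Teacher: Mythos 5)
Your high-level plan shares the tree-structured broadcast construction with the paper's proof, but the argument you outline for the lower bound $\Delta_n(\mu)\ge\frac12$ is genuinely different from theirs, and as sketched it has a gap that I do not see how to close. The paper does not run a conditional Borel--Cantelli argument over leaf events. Instead it first establishes an \emph{almost-sure escape} property: for every node $v$ of the skeleton tree, almost surely some descendant $v'$ satisfies $Y_{v'}\ne Y_v$. This is where the divergence hypothesis is actually used, and crucially it is used in conjunction with the balanced, fractal ordering of the binary skeleton (Algorithm~\ref{alg:construct_tree}): the evenness of the construction guarantees that \emph{every} subtree $\Tcal(r,s)$ receives an infinite share of the total edge mass, so that $\sum_{v\in\Tcal(r,s)}\eta_v=\infty$ uniformly over subtrees. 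The lower bound then follows by a sample-by-sample recursion: starting from a deep node $v_\delta$, at step $i$ one descends \emph{inside the subtree of $\hat v_{i-1}$} until finding a node whose $i$-th sample reads $1$; a single leaf $l(\hat v_n)$ below $\hat v_n$ then agrees with $1$ across all $n$ samples except on an event of probability at most $\delta$, with $\delta$ made arbitrarily small by choosing $v_\delta$ deep enough.

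The gap in your version is the claim that the events $A_v=\{X_v^{(1)}=\cdots=X_v^{(n)}\}$ for ``fresh leaves'' $v$ are conditionally independent given the broadcast above a fixed depth cutoff. In a tree-metric realization, two leaves share the entire broadcast path down to their least common ancestor, which generically lies strictly \emph{below} any fixed cutoff level; conditioning at the cutoff therefore leaves the $A_v$ strongly positively correlated through the shared below-cutoff ancestry. To manufacture genuine conditional independence you would have to graft, onto each fresh cover-center, infinitely many children whose below-cutoff paths are pairwise disjoint. But those extra leaves must either sit at a bounded-away-from-zero $\xi$-distance from one another (which forces $\Ncal_\xi(\eps)=\infty$ at small $\eps$, violating the required identity $\Ncal_\xi=N$), or have flip probabilities shrinking to zero, which drives $\Pbb(A_v\mid\mathrm{coarse})$ to zero so fast that the divergence of $\int_0^{1/2}N$ no longer implies divergence of the series. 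The Abel-summation identity you invoke controls a sum of the form $\sum_k(N_k-N_{k-1})\eps_{k-1}$, but the conditional probabilities entering Borel--Cantelli behave like $\eps^m$ for an exponent $m$ that depends on how the $n$ coarse bits split, and these two quantities are not the same. The paper's recursive escape construction sidesteps both problems: it never needs independence across leaves, only the almost-sure existence of a disagreeing descendant within every subtree, and it preserves the exact covering-number structure by working with the tree skeleton itself rather than with grafted side leaves.
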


Towards characterizing distributions $\mu$ for which $\Delta_n(\mu)$ decays to $0$, we have the following sufficient condition, which subsumes the condition from Eq~\eqref{eq:sufficient_condition} and does not involve covering numbers. The proof is given in \cref{sec:examples}.

\begin{theorem}\label{thm:more_general_suff_condition}
    Let $\mu$ be a distribution on $\{0,1\}^\N$ such that $(\N,\xi)$ is totally bounded. Suppose that there exists $K\geq 1$ such that for any $\eps>0$, there exist events $(E_k)_{k\in\N}$ and a finite set $J\subset \N$ with
    \begin{itemize}
        \item $\Pbb(E_k)\leq \eps,\;\forall k\in\N$,
        \item $\displaystyle \sup_{k\in\N} \frac{\log(k+1)}{\log\frac{1}{\Pbb(E_k)}}<\infty$,
        \item $\forall i\in\N, \exists j\in J, \exists \Kcal\subset \N,  \text{ such that } |\Kcal|\leq K \text{ and } \{X_i\neq X_j\}\subset \bigcup_{k\in \Kcal} E_k$.
    \end{itemize}
    Then $\Delta_n(\mu) \ninf 0$.
\end{theorem}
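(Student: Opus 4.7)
The plan is to reduce the uniform deviation over $\mathbb{N}$ to a deviation over the finite set $J$ (which vanishes trivially) plus a deviation of a secondary correlated Bernoulli process on $\mathbb{N}$ whose marginals satisfy the Cohen--Kontorovich condition $T<\infty$; the latter is then controlled via the upper bound that remains valid for correlated Bernoulli sequences.

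Fix $\varepsilon>0$ and invoke the hypothesis to obtain events $(E_k)_{k\in\mathbb{N}}$, a finite set $J\subset\mathbb{N}$, and, for each $i\in\mathbb{N}$, some $j(i)\in J$ and $\mathcal{K}(i)\subset\mathbb{N}$ with $|\mathcal{K}(i)|\leq K$ and $\{X_i\neq X_{j(i)}\}\subset\bigcup_{k\in\mathcal{K}(i)}E_k$. Set $q_k:=\mathbb{P}(E_k)$, $\hat p_i := \frac{1}{n}\sum_{l=1}^n X_i^{(l)}$, and $\hat q_k := \frac{1}{n}\sum_{l=1}^n \mathbbm{1}_{E_k^{(l)}}$. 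The first key step is the pointwise inequality $|X_i^{(l)}-X_{j(i)}^{(l)}|\leq \sum_{k\in\mathcal{K}(i)}\mathbbm{1}_{E_k^{(l)}}$, from which averaging and taking expectations gives $|\hat p_i - \hat p_{j(i)}|\leq \sum_{k\in\mathcal{K}(i)}\hat q_k$ and $|p_i - p_{j(i)}|\leq \sum_{k\in\mathcal{K}(i)}q_k$. Combining via triangle inequality, using $|\mathcal{K}(i)|\leq K$, $q_k\leq \varepsilon$, and $\sup_k \hat q_k \leq \sup_k|\hat q_k - q_k| + \varepsilon$, one obtains
\[
|\hat p_i - p_i| \;\leq\; \max_{j\in J}|\hat p_j - p_j| \;+\; K\sup_{k\in\mathbb{N}}|\hat q_k - q_k| \;+\; 2K\varepsilon.
\]

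Taking sup over $i\in\mathbb{N}$ and then expectation,
\[
\Delta_n(\mu) \;\leq\; \mathbb{E}\max_{j\in J}|\hat p_j - p_j| \;+\; K\,\mathbb{E}\sup_{k\in\mathbb{N}}|\hat q_k - q_k| \;+\; 2K\varepsilon.
\]
The first expectation vanishes as $n\to\infty$ since $|J|<\infty$. The second is exactly $\Delta_n$ for the correlated Bernoulli process $(\mathbbm{1}_{E_k})_{k\in\mathbb{N}}$ with marginals $(q_k)$; rearranging $(q_k)$ in non-increasing order (which leaves $\sup_k|\hat q_k-q_k|$ unchanged), the second hypothesis forces $q_{(k)}\lesssim k^{-1/T_0}$ with $T_0:=\sup_k\log(k+1)/\log(1/q_k)$, hence $T((q_{(k)}))<\infty$ in the Cohen--Kontorovich sense. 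By the bounds of Eq.~\eqref{eq:tight_bound_independent_1}--\eqref{eq:tight_bound_independent_2}, which (as emphasized after Corollary~\ref{cor:neg-cor}) survive arbitrary correlations because they rely only on union bounds and Markov's inequality, this second expectation tends to $0$ as well. Thus $\limsup_{n\to\infty}\Delta_n(\mu)\leq 2K\varepsilon$, and letting $\varepsilon\to 0$ concludes.

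The main technical ingredient---really the only non-trivial step---is the transfer of the independent-case upper bound to the correlated Bernoulli process $(\mathbbm{1}_{E_k})_{k\in\mathbb{N}}$; everything else is a short triangle-inequality manipulation enabled by the pointwise containment $\{X_i\neq X_{j(i)}\}\subset\bigcup_{k\in\mathcal{K}(i)}E_k$. Observe in passing that the stated total-boundedness of $(\mathbb{N},\xi)$ is in fact already implied by the three structural conditions (each $i\in\mathbb{N}$ is within $\xi$-distance $K\varepsilon$ of the finite set $J$), so it plays no independent role in the argument.
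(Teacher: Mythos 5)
Your proposal is correct and follows essentially the same route as the paper's proof: the identical triangle-inequality decomposition $|\hat p_i-p_i|\le \max_{j\in J}|\hat p_j-p_j|+K\sup_k|\hat q_k-q_k|+2K\eps$, followed by finiteness of $J$ for the first term and, for the auxiliary process $(\1_{E_k})_k$, the observation that $T<\infty$ together with the correlation-robust upper bounds of \citet{CohenK23a,blanchard2023tight} gives $\E\sup_k|\hat q_k-q_k|\to0$. Your closing remark that total boundedness of $(\N,\xi)$ is already implied by the covering conditions (so it is not used in the argument) is accurate and consistent with the paper's proof, which likewise never invokes it.
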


While covering numbers $\Ncal_\xi(\eps)$ aim to 
account for
the bad events when pairs of coordinates differ $\{X_i\neq X_j\}$ for $i,j\in\N$, the condition from \cref{thm:more_general_suff_condition} generalizes this covering number approach in two distinct ways: 
(i) Instead of considering the bad events $\{X_i\neq X_j\}$ only through the expectation $\xi(i,j) = \Pbb(X_i\neq X_j)$, the condition allows for positive correlations of these bad events. Hence, instead of defining balls in the space $(\N,\xi)$, we directly define events $E_k$ in the probability space of $\mu$. 
(ii) Instead of covering a bad event $\{X_i\neq Z_j\}$ with a single event $E_k$ (or ball in the covering number approach), we allow for the event to be covered by several events $E_k$. However, the number of covering events $E_k$ for each bad event needs to be bounded (condition $|\Kcal|\leq K$).

Both generalizations are important to define the exact characterization for distributions $\mu$ such that $\Delta_n(\mu)\to 0$. In particular, the necessity of the first generalization (i) is already exemplified by \cref{thm:2procs}. 
In Section~\ref{sec:examples}, following the proofs of the latter claims,
we provide illustrative examples of distributions demonstrating the necessity of these generalizations (distributions $\mu$ for which $\Delta_n(\mu)\to 0$ and that wouldn't satisfy the condition of \cref{thm:more_general_suff_condition} without such a generalization). 
These are meant to guide the intuition in future work towards
the full
characterization, which we leave as an open question.

\paragraph{Open problem.}
Characterize the distributions
$\mu$ on $\set{0,1}^\N$
for which $\Delta_n(\mu)
\to0$ as $n\to\infty$.
Is the sufficient condition from \cref{thm:more_general_suff_condition} also necessary to have $\Delta_n(\mu)\to0$?

\paragraph{Open problem.}
How can the covering numbers
$\Ncal_\rho(\eps)$
or
$\Ncal_\xi(\eps)$
be calculated or bounded
explicitly in terms of $\mu$?

\section{Proof of Theorem \ref{thm:neg-corr} (Decoupling from below)}
\label{sec:proof_thm_1}
\paragraph{Notation.}

Throughout this section, whenever $X_1,X_2,\ldots$
is a collection of (one-dimensional) random variables,
we denote by $\tilde X_1,\tilde X_2,\ldots$
its {\em independent version}:
the $\tilde X_i$s are 
mutually
independent (and independent of the $X_i$s),
and each $X_i$ is equivalent to $\tilde X_i$
in distribution.
In this section, we will provide comparison results
of the type
$
\E\max_{i\in[d]}X_i
\gtrsim
\E\max_{i\in[d]}\tilde X_i
$
under negative pairwise correlation 
conditions on the $X_i$.

\paragraph{Bernoulli case.}
We begin with the case where
$X_i\sim\Bernu(\p[i])$, $i\in[d]$.
Letting
$Z=\sum_{i=1}^d X_i$
and
$\tilde Z=\sum_{i=1}^d \tilde X_i$,
we have
\beqn
\label{eq:ZZtil}
\E\max_{i\in[d]}X_i=\P(Z>0),
\qquad
\E\max_{i\in[d]}\tilde X_i=\P(\tilde Z>0).
\eeqn
Let us recall the notion of
pairwise independence:
for each $i\neq j\in [d]$, we have
$\E[X_iX_j]=
\E[X_i]\E[X_j]
$. 
\begin{proposition}
\label{prop:ber-decoup}
Let $X_i,\tilde X_i,Z,\tilde Z,\p[i]$ 
be as in \eqref{eq:ZZtil}
and assume additionally that the $X_i$
are pairwise independent.
Then
\beq
\P(Z>0) \ge \frac12\P(\tilde Z>0).
\eeq
\end{proposition}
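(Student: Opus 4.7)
The plan is to apply the second-moment (Paley--Zygmund) inequality
$$\P(Z > 0) \;\geq\; \frac{(\E Z)^2}{\E[Z^2]},$$
which is nothing more than Cauchy--Schwarz applied to $Z = Z \cdot \mathbf{1}[Z>0]$. Set $\lambda := \sum_{i=1}^d p_i = \E Z$. Using pairwise independence, I compute
$$\E[Z^2] \;=\; \sum_{i=1}^d \E[X_i^2] + \sum_{i \neq j} \E[X_i X_j] \;=\; \sum_i p_i + \sum_{i \neq j} p_i p_j \;\leq\; \lambda + \lambda^2,$$
since $X_i^2 = X_i$ for Bernoulli and the last step drops the nonnegative term $\sum_i p_i^2$. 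This yields the key lower bound
$$\P(Z > 0) \;\geq\; \frac{\lambda^2}{\lambda + \lambda^2} \;=\; \frac{\lambda}{1+\lambda}.$$

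Next I bound the independent-version probability. By union bound (or Bernoulli's inequality $\prod_i(1-p_i) \geq 1 - \sum_i p_i$),
$$\P(\tilde Z > 0) \;=\; 1 - \prod_{i=1}^d (1-p_i) \;\leq\; \min(1,\lambda).$$

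It then remains to verify the elementary inequality
$$\frac{\lambda}{1+\lambda} \;\geq\; \frac{1}{2}\min(1,\lambda),$$
via a two-case split. If $\lambda \leq 1$, this reduces to $\frac{1}{1+\lambda} \geq \frac{1}{2}$, which holds. If $\lambda \geq 1$, it reduces to $\frac{\lambda}{1+\lambda} \geq \frac{1}{2}$, which also holds. Combining with the two bounds above gives $\P(Z > 0) \geq \tfrac12 \P(\tilde Z > 0)$.

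There is no real obstacle here: the argument is a textbook application of the second-moment method, and the only modest care is in extracting the right upper bound on $\P(\tilde Z > 0)$ (the trivial bound $1$ is too weak when $\lambda$ is small, hence the need for the union-bound estimate $\lambda$). I note in passing that the same computation works whenever $\E[X_i X_j] \leq p_i p_j$, so pairwise independence can be relaxed to negative pairwise correlation without changing any line of the proof; this is consistent with the use of this proposition inside the proof of Theorem~\ref{thm:neg-corr}.
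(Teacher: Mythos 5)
Your proposal is correct and follows essentially the same route as the paper: Paley--Zygmund with the identical second-moment bound $\E[Z^2]\le \lambda+\lambda^2$, then Bernoulli's inequality (the paper's $P\ge 1-S$) and a case split on $\lambda\lessgtr 1$, merely repackaged as the bound $\P(\tilde Z>0)\le \min(1,\lambda)$ followed by the elementary inequality $\frac{\lambda}{1+\lambda}\ge\frac12\min(1,\lambda)$. Your closing remark that only $\E[X_iX_j]\le \p[i]\p[j]$ is needed matches the paper's own observation (Corollary~\ref{cor:ber-neg}).
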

\begin{proof}
By the Paley-Zygmund inequality,
\beq
\P(Z>0) \ge
\frac{(\E Z)^2}{\E[Z^2]}.
\eeq
Now $\E Z=\sum_{i=1}^d \p[i]$, which we assume without loss of generality to be non-zero (otherwise $\Pbb(Z>0)=\Pbb(\tilde Z>0)=0$),
and by pairwise independence,
\beqn
\label{eq:Z^2}
\E[Z^2]
=
\sum_{i=1}^d \p[i]
+
2\sum_{1\le i<j\le d} \p[i]\p[j]
=
\sum_{i=1}^d \p[i]
+
\paren{\sum_{i=1}^d \p[i]}^2
-
\sum_{i=1}^d \p[i]^2
\le
\sum_{i=1}^d \p[i]
+
\paren{\sum_{i=1}^d \p[i]}^2
.
\eeqn
Hence,
\beq
\frac{(\E Z)^2}{\E[Z^2]}
\ge
\frac{
\paren{\sum_{i=1}^d \p[i]}^2
}{
\sum_{i=1}^d \p[i]
+
\paren{\sum_{i=1}^d \p[i]}^2
} = \frac{\sum_{i=1}^d \p[i]}{ 1+\sum_{i=1}^d \p[i]}
.
\eeq
On the other hand,
$\P(\tilde Z>0)$ is readily computed:
\beq
\P(\tilde Z>0)
=
1-\prod_{i=1}^d(1-\p[i]).
\eeq
Therefore, to prove the claim, it suffices to show that
\beq
G(\p[1],\ldots,\p[d])
:=
2 \sum_{i=1}^d \p[i] - 
\paren{1+\sum_{i=1}^d \p[i] }
\paren{1-\prod_{i=1}^d(1-\p[i])}
\ge0.
\eeq
To this end, we write $G=S+P+SP-1$, where
$S=\sum_i \p[i]$
and $P=\prod_i(1- \p[i])$.
Now if $S\ge1$ then obviously $G\ge0$
and we are done.
Otherwise,
since $P\ge1-S$
trivially holds,
we have
$G\ge S(1-S)$.
In this case, $S<1\implies G\ge0$.
\end{proof}

\paragraph{Relaxing pairwise independence.}
An inspection of the proof
shows that we do not actually need
$\E[X_iX_j]=\p[i]\p[j]$,
but rather only
$\E[X_iX_j]\le \p[i]\p[j]$.
This condition is called {\em negative (pairwise) covariance} \citep{Dubhashi:1998:BBS:299633.299634}.
\begin{corollary}
    \label{cor:ber-neg}
Let $X_i,\tilde X_i,Z,\tilde Z,\p[i]$ 
be as in \eqref{eq:ZZtil}
and assume additionally that the $X_i$
satisfy negative pairwise covariance: $\E[X_iX_j]\le \p[i]\p[j]$
for $i\neq j$.
Then
\beq
\P(Z>0) \ge \frac12\P(\tilde Z>0).
\eeq
\end{corollary}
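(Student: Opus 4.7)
The plan is to observe that the proof of Proposition~\ref{prop:ber-decoup} only uses the pairwise independence assumption in one place, namely the computation of $\E[Z^2]$, and that at that step it is enough to have an \emph{upper} bound on the cross term $\sum_{i<j}\E[X_i X_j]$ rather than equality. So I would simply replay the argument of Proposition~\ref{prop:ber-decoup} and replace the equality $\E[X_iX_j]=\p[i]\p[j]$ by the inequality $\E[X_iX_j]\le \p[i]\p[j]$.

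Concretely, I would start with Paley--Zygmund: $\P(Z>0)\ge (\E Z)^2/\E[Z^2]$, noting (as in the proposition's proof) that we may assume $\E Z=\sum_i \p[i]>0$. Next, writing
\[
\E[Z^2]=\sum_{i=1}^d \p[i] + 2\sum_{1\le i<j\le d}\E[X_i X_j],
\]
I would use the negative pairwise covariance hypothesis to upper bound $2\sum_{i<j}\E[X_i X_j]\le 2\sum_{i<j}\p[i]\p[j]=(\sum_i \p[i])^2-\sum_i \p[i]^2$, yielding exactly the same upper bound on $\E[Z^2]$ as in \eqref{eq:Z^2}. Substituting this into Paley--Zygmund produces the same lower bound
\[
\P(Z>0)\ge \frac{\sum_i \p[i]}{1+\sum_i \p[i]},
\]
and the rest of the argument (bounding $\P(\tilde Z>0)=1-\prod_i(1-\p[i])$ and showing $G(\p[1],\ldots,\p[d])\ge 0$) is \emph{identical} to the independent case and requires no modification.

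There is essentially no new obstacle here; the only thing to verify is the direction of the inequality, which works out because $\E[Z^2]$ sits in the denominator of the Paley--Zygmund bound, so an \emph{upper} bound on $\E[Z^2]$ yields a \emph{lower} bound on $\P(Z>0)$. In the write-up I would emphasize this single point and otherwise refer back to the proof of Proposition~\ref{prop:ber-decoup} rather than reproducing the algebraic manipulation of $G=S+P+SP-1$.
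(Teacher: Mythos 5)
Your proposal is correct and matches the paper's argument exactly: the paper also observes that pairwise independence enters only through the computation of $\E[Z^2]$, where an upper bound $\E[X_iX_j]\le \p[i]\p[j]$ suffices since $\E[Z^2]$ sits in the denominator of the Paley--Zygmund bound. Nothing further is needed.
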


\paragraph{General positive random variables.}
Now let
$X_1,\ldots,X_d$ be non-negative
integrable random variables
and the
$\tilde X_1,\ldots,\tilde X_d$
are their independent copies:
each $\tilde X_i$ is distributed identically to $X_i$
and the $\tilde X_i$ are mutually independent.

\begin{proposition}
\label{prop:ber-decoup-cont}
Let
$X_1,\ldots,X_d$ be non-negative and
integrable
with independent copies
$\tilde X_i$ as above.
If additionally the $X_i$
are pairwise independent, then
\beq
\E\max_{i\in[d]}X_i
\ge
\frac12\E\max_{i\in[d]}\tilde X_i
.
\eeq
\end{proposition}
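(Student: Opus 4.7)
The plan is to reduce the general non-negative case to the Bernoulli case already handled in Proposition~\ref{prop:ber-decoup}, via a layer-cake (tail integration) argument. For any non-negative integrable random variable $Y$, we have $\E Y=\int_0^\infty \P(Y>t)\,dt$. Apply this to $Y=\max_i X_i$ and to $Y=\max_i \tilde X_i$. At each threshold $t\ge 0$, define the Bernoulli indicators
$$
X_i^{(t)} \;:=\; \mathbbm{1}[X_i>t], \qquad \tilde X_i^{(t)} \;:=\; \mathbbm{1}[\tilde X_i>t],\qquad i\in[d].
$$
The event $\{\max_i X_i>t\}$ coincides with $\{\sum_i X_i^{(t)}>0\}$, and likewise on the tilded side.

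The next step is to check that the Bernoulli variables $X_i^{(t)}$ inherit pairwise independence from the $X_i$: since $X_i^{(t)}\in\sigma(X_i)$ and $X_j^{(t)}\in\sigma(X_j)$, pairwise independence of $X_i$ and $X_j$ passes to their indicators, so $\E[X_i^{(t)} X_j^{(t)}]=\E[X_i^{(t)}]\E[X_j^{(t)}]$ for all $i\ne j$. Moreover, the $\tilde X_i^{(t)}$ are clearly the independent version of the $X_i^{(t)}$ (same marginals and mutually independent). Proposition~\ref{prop:ber-decoup} therefore gives, for every $t\ge 0$,
$$
\P\!\left(\max_{i\in[d]} X_i>t\right)
\;=\;
\P\!\left(\sum_{i=1}^d X_i^{(t)}>0\right)
\;\ge\;
\tfrac12\,\P\!\left(\sum_{i=1}^d \tilde X_i^{(t)}>0\right)
\;=\;
\tfrac12\,\P\!\left(\max_{i\in[d]} \tilde X_i>t\right).
$$

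Finally, integrate this pointwise tail inequality in $t$ over $[0,\infty)$; by monotone/Tonelli and the layer-cake identity,
$$
\E\max_{i\in[d]} X_i \;=\; \int_0^\infty \P\!\left(\max_i X_i>t\right)\,dt
\;\ge\; \tfrac12 \int_0^\infty \P\!\left(\max_i \tilde X_i>t\right)\,dt
\;=\; \tfrac12\,\E\max_{i\in[d]} \tilde X_i,
$$
which is the desired inequality. Integrability of $\max_i X_i$ (needed to ensure the integrals are finite on both sides) follows from $\max_i X_i \le \sum_i X_i$ and the integrability assumption on each $X_i$.

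There is no serious obstacle here; the only point worth flagging is the preservation of pairwise independence under the coordinatewise measurable maps $X_i\mapsto \mathbbm{1}[X_i>t]$, which is a standard fact but is the one place the hypothesis is used. Noting, as in Corollary~\ref{cor:ber-neg}, that Proposition~\ref{prop:ber-decoup} only requires negative pairwise covariance of the $X_i^{(t)}$, the same proof in fact yields the analogous statement under the weaker hypothesis $\P(X_i>t,X_j>t)\le \P(X_i>t)\P(X_j>t)$ for all $t\ge 0$ and $i\ne j$.
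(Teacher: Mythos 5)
Your proof is correct and follows essentially the same route as the paper: truncate at each level $t$ to Bernoulli indicators, note they inherit pairwise independence, apply Proposition~\ref{prop:ber-decoup}, and integrate the tail inequality. Your closing remark about relaxing to negative upper orthant dependence is also exactly the observation the paper makes in Corollary~\ref{cor:neg-orth}.
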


\begin{proof}
For $t>0$ and $i\in[d]$,
put 
$Y_i(t)=\pred{X_i>t}$,
$\tilde Y_i(t)=\pred{\tilde X_i>t}$
and
$Z(t)=\sum_{i=1}^d Y_i(t)$,
$\tilde Z(t)=\sum_{i=1}^d Y_i(t)$.
Then
\beq
\E\max_{i\in[d]}X_i
&=&
\int_0^\infty
\P\paren{
\max_{i\in[d]}X_i>t
}\mathd t
\\&=&
\int_0^\infty
\P\paren{
Z(t)>0
}\mathd t
\\
&\ge&
\frac12\int_0^\infty
\P\paren{
\tilde Z(t)>0
}\mathd t
\\&=&
\frac12\int_0^\infty
\P\paren{
\max_{i\in[d]}\tilde X_i>t
}\mathd t
\\&=&
\frac12\E\max_{i\in[d]}\tilde X_i
,
\eeq
where Proposition~\ref{prop:ber-decoup}
was invoked in the inequality step.
\end{proof}

\paragraph{Relaxing pairwise independence.} \label{par:pnuod}
As before, the full strength of pairwise
independence of the $X_i$ is not needed.
The condition $\P(X_i>t,X_j>t)\le \P(X_i>t)\P(X_j>t)$
for all $i\neq j\in[d]$ and $t>0$, called pairwise
{\em negative upper orthant dependence} \citep[Definition~2.3]{10.1214/aos/1176346079},  would suffice;
it is weaker than 
pairwise independence.

\begin{corollary}
    \label{cor:neg-orth}
Let
$X_1,\ldots,X_d$ be non-negative and
integrable
with independent copies
$\tilde X_i$ as above.
If additionally the $X_i$
verify
$\P(X_i>t,X_j>t)\le \P(X_i>t)\P(X_j>t)$
for all $i\neq j\in[d]$ and $t>0$.
Then
\beq
\E\max_{i\in[d]}X_i
\ge
\frac12\E\max_{i\in[d]}\tilde X_i
.
\eeq
\end{corollary}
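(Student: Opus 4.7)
The plan is to mimic the proof of Proposition~\ref{prop:ber-decoup-cont} verbatim, replacing the invocation of Proposition~\ref{prop:ber-decoup} (pairwise independent Bernoullis) by the invocation of Corollary~\ref{cor:ber-neg} (Bernoullis with negative pairwise covariance). The only step that requires any thought is showing that the NUOD hypothesis on the $X_i$ transfers to the negative-covariance hypothesis on the level-set indicators.

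Concretely, I would fix $t>0$ and set $Y_i(t) := \pred{X_i>t} \in \{0,1\}$, a Bernoulli variable with parameter $\P(X_i>t)$, and likewise $\tilde Y_i(t) := \pred{\tilde X_i > t}$; the $\tilde Y_i(t)$ are then the independent version of the $Y_i(t)$ at that level. The NUOD assumption gives exactly
\beq
\E[Y_i(t) Y_j(t)] = \P(X_i>t, X_j>t) \le \P(X_i>t)\P(X_j>t) = \E[Y_i(t)]\E[Y_j(t)]
\eeq
for all $i \ne j$ and all $t > 0$, which is precisely the negative pairwise covariance hypothesis required by Corollary~\ref{cor:ber-neg}. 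Applying that corollary with $Z(t) = \sum_i Y_i(t)$ and $\tilde Z(t) = \sum_i \tilde Y_i(t)$ yields
\beq
\P(Z(t) > 0) \ge \tfrac12 \P(\tilde Z(t) > 0), \qquad t>0.
\eeq

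Finally I would apply the layer-cake identity $\E W = \int_0^\infty \P(W > t)\,\mathd t$ for non-negative $W$ to both $\max_i X_i$ and $\max_i \tilde X_i$, observing that $\{\max_i X_i > t\} = \{Z(t) > 0\}$ and likewise for the tildes, and integrate the pointwise-in-$t$ bound above. Fubini/Tonelli is trivially justified since everything is non-negative, and integrability of $\max_i \tilde X_i$ follows from integrability of each $X_i$ (bounding the max by the sum). There is no real obstacle here; the only subtlety worth flagging is that, unlike in the pairwise-independent case, one must verify that the NUOD property is preserved under the thresholding $X_i \mapsto \pred{X_i > t}$, which is immediate from the definition but is the one substantive line of the argument.
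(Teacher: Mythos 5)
Your proposal is correct and is exactly the route the paper intends: the paragraph preceding Corollary~\ref{cor:neg-orth} explicitly notes that NUOD yields negative covariance on the level-set indicators, so one runs the layer-cake argument of Proposition~\ref{prop:ber-decoup-cont} but invokes Corollary~\ref{cor:ber-neg} in place of Proposition~\ref{prop:ber-decoup}.
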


\begin{definition}[\cite{10.1214/aos/1176346079}, Definition 2.1]
Random variables $X_1, X_2, ..., X_k$ are said to be \emph{negatively associated} (NA) if for every pair of disjoint subsets $A_1, A_2$ of $\set{1,2, ..., k}$,
\beq
\Cov{f_1(X_i,\ i\in A_1), f_2(X_j,\ j \in A_2)}
\leq
0
\eeq
whenever $f_1$ and $f_2$ are increasing. NA may also refer to the vector $X = (X_1, \dots, X_k)$ or to the underlying distribution of $X$. Additionally, NA may denote negative association. If $|A_1|=|A_2|=1$ we say that $X$ is \emph{pairwise negatively associated} (PNA). Note that the definition is the same if both $f_1$ and $f_2$ are decreasing.
\end{definition}
\begin{theorem}[Restatement of Theorem~\ref{thm:neg-corr}]
    \label{thm:neg-corr-restated}
    Let $\mu$ be a probability measure on $\set{0,1}^\N$
    such that for $X\sim\mu$, the $i$th and $j$th components
    of $X$ satisfy $\E[X_iX_j]\le\E[X_i]\E[X_j]$.
    Let $X^{(1)},\ldots,X^{(n)}$ be $n$ independent copies of $X$
    and define $Y=n\inv\sum_{i=1}^nX^{(i)}-\E[X]$.
    Let $\tilde Y$ be the independent version of $Y$:
    each component $\tilde Y_i$ is equal to $Y_i$ in distribution
    and the $(\tilde Y_i)_{i\in\N}$ are mutually independent.
    Then
\beq
\E\sup_{i\in\N}\abs{Y_i}
\ge
\frac14
\E\sup_{i\in\N}\abs{\tilde Y_i}.
\eeq
\end{theorem}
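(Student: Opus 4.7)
The plan is to decompose $|Y_i| = (Y_i)_+ + (Y_i)_-$ (only one summand being nonzero at a time) and apply Corollary~\ref{cor:neg-orth} separately to the two nonnegative families $((Y_i)_+)_{i\in\N}$ and $((Y_i)_-)_{i\in\N}$. Pointwise I will use the two elementary inequalities
\[\sup_{i\in\N}|Y_i| \;\geq\; \tfrac12\bigl(\sup_i (Y_i)_+ + \sup_i (Y_i)_-\bigr), \qquad \sup_{i\in\N}|\tilde Y_i| \;\leq\; \sup_i(\tilde Y_i)_+ + \sup_i(\tilde Y_i)_-,\]
which combined with the factor $\tfrac12$ loss incurred by Corollary~\ref{cor:neg-orth} on each sign will yield the claimed $\tfrac14$.

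The substantive step is to verify the hypothesis of Corollary~\ref{cor:neg-orth}: pairwise negative upper orthant dependence for each of the two families, i.e., for all $t>0$ and $i\neq j$,
\[ \Pbb((Y_i)_\pm > t,\,(Y_j)_\pm > t) \;\leq\; \Pbb((Y_i)_\pm > t)\,\Pbb((Y_j)_\pm > t).\]
Writing $S_i := \sum_{k=1}^n X_i^{(k)}$, the event $\{(Y_i)_+>t\}=\{S_i > n(p_i+t)\}$ is non-decreasing in $S_i$, while $\{(Y_i)_->t\}=\{S_i < n(p_i-t)\}$ is non-increasing; complementing the indicators reduces the negative case to the positive one. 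Thus the task reduces to showing that the pair $(S_i, S_j)$ is negatively associated (NA) in the Joag-Dev--Proschan sense, which by complementation implies both required tail inequalities.

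The crux, and the main obstacle, is the propagation of pairwise negative correlation from $(X_i,X_j)$ to the sums $(S_i,S_j)$. I will lean on two classical facts. First, for Bernoulli variables, pairwise negative covariance is equivalent to NA of the 2-dimensional pair: any non-decreasing function on $\{0,1\}$ is affine with non-negative slope, so $\operatorname{Cov}(f(X_i),g(X_j))$ is a non-negative multiple of $\operatorname{Cov}(X_i,X_j)\leq 0$. Second, NA is preserved under independent concatenation and under taking non-decreasing functions of disjoint coordinate subsets. Stacking the independent NA pairs $(X_i^{(k)},X_j^{(k)})_{k\in[n]}$ yields a $2n$-dimensional NA vector, and since $S_i$ and $S_j$ are non-decreasing functions of the disjoint blocks $(X_i^{(k)})_k$ and $(X_j^{(k)})_k$ respectively, the pair $(S_i,S_j)$ is itself NA.

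Finally, the extension of Corollary~\ref{cor:neg-orth} from a finite index set $[d]$ to the countable index set $\N$ is routine via monotone convergence on $\sup_{i\leq d}$ as $d\to\infty$, applied to both the correlated and independent sides. Chaining the inequalities from the first paragraph with the $\tfrac12$ decoupling applied on each sign will then produce
\[\E\sup_{i\in\N}|Y_i| \;\geq\; \tfrac12\bigl(\E\sup_i(Y_i)_+ + \E\sup_i(Y_i)_-\bigr) \;\geq\; \tfrac14\bigl(\E\sup_i(\tilde Y_i)_+ + \E\sup_i(\tilde Y_i)_-\bigr) \;\geq\; \tfrac14\,\E\sup_i |\tilde Y_i|,\]
as desired.
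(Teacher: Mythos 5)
Your proposal is correct and follows essentially the same route as the paper: establish negative association of the Bernoulli pair via the affine-on-$\set{0,1}$ covariance identity, propagate it to the empirical sums using the Joag-Dev--Proschan closure properties (independent unions and monotone functions of disjoint blocks), and then apply Corollary~\ref{cor:neg-orth} separately to the positive and negative parts of $Y$, losing a factor $\frac12$ twice. Your explicit mention of monotone convergence for passing from $[d]$ to $\N$ is a minor added detail the paper leaves implicit; otherwise the two arguments coincide.
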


\begin{proof}
    The proof consists of three parts. We first show that the condition \(\E[X_iX_j] \le \E[X_i]\E[X_j]\) implies that $X$ is pairwise negatively associated. Then, we show that this implies $Y$ is also PNA. Finally, we invoke Corollary~\ref{cor:neg-orth} for the vector $\abs{Y}$ and we are done.

For any $i,j$ let $\p[i] \eqdef \E[X_i]$ and $\r[ij] \eqdef \E[X_iX_j]$. By assumption,
$\r[ij] \leq \p[i]\p[j]$.
    Let \(f,g\) be two real-valued non-decreasing functions, then
\begin{align*}
    \E & [f(X_i)g(X_j)] - \E[f(X_i)]\E[g(X_j)] \\
    & = rf(1)g(1) + (\p[i]-\r[ij])f(1)g(0) + (\p[j]-\r[ij])f(0)g(1) + (1-\p[i]-\p[j]+\r[ij])f(0)g(0) \\
    & - \p[i] \p[j] f(1)g(1) - \p[i](1-\p[j])f(1)g(0) - (1-\p[i]) \p[j] f(0)g(1) - (1-\p[i])(1-\p[j]) f(0)g(0) \\
    & = (\r[ij]-\p[j]\p[j])(f(1)g(1)-f(1)g(0)-f(0)g(1)+f(0)g(0)) \\
    & = (\r[ij]-\p[j]\p[j])(f(1)-f(0))(g(1)-g(0)) \\
    & \leq 0
\end{align*}
    for all $i,j$. This shows that \(X\) is PNA.
    Since any pair $(X_i, X_j)$ is NA, by \citet[Property~P$_7$]{10.1214/aos/1176346079}, the union of independent sets of NA random variables are NA, so  
    $$(X_i^{(1)}, X_i^{(2)}, \dots, X_i^{(n)}, X_j^{(1)}, X_j^{(2)}, \dots, X_j^{(n)})$$ is also NA. Now, we apply \citet[Property~P$_6$]{10.1214/aos/1176346079}: increasing (or decreasing) functions defined on disjoint subsets of a set of NA random variables are NA, where the increasing functions are \(\pl{Y_i} \eqdef f_+(X_i^{(1)}, X_i^{(2)}, \dots, X_i^{(n)}) \eqdef \pl{n\inv\sum_{k=1}^n X_i^{(k)}-\E[X_i]} \) for all $i$. We conclude that $\pl{Y} = (\pl{Y_1}, \pl{Y_2}, \dots)$ is PNA. In the same manner, $\mns{Y}$ is also PNA.
    Moreover, PNA obviously implies pairwise negative upper orthant dependence.
    Now,
  \begin{align*}
    \E\sup_{i\in\N}\abs{Y_i}
    &=
    \E\sup_{i\in\N}\pl{Y_i}+\mns{Y_i}
    \\
    &\geq
    \frac{1}{2}\E\sup_{i\in\N}\pl{Y_i}
    +\frac{1}{2}\E\sup_{i\in\N}\mns{Y_i}
    \\
    &\geq
    \frac{1}{4}\E\sup_{i\in\N}\pl{\tilde Y_i}
    +\frac{1}{4}\E\sup_{i\in\N}\mns{\tilde Y_i}
    \\&
    \geq
    \frac{1}{4}\E\sup_{i\in\N}\abs{\tilde Y_i},
  \end{align*}
  where the second inequality is due to Corollary~\ref{cor:neg-orth}.
\end{proof}

\section{Proof of Theorem~\ref{thm:2procs} (Covariance does not characterize \mathinhead{$\Delta_n$})}
\label{sec:proof_thm_3}

In this section, we provide an example of two distributions $\mu$ and $\nu$ on $\{0,1\}^\N$ that share the same covariance matrix but for which $\Delta_n(\mu)\to 0$ and $\Delta_n(\nu)\to \frac{1}{2}$ as $n\to\infty$.

\paragraph{Construction of the example.}
We partition $\N$ as follows: $\Scal_k = \{2^{(k-1)^3} < t \leq 2^{k^3}\}$ for $k\geq 1$. Let $(Z_1)_{i\geq 1}\overset{\iid}{\sim}\Bernu(1/2)$ and $(Y_k)_{k\geq 0}\overset{\iid}{\sim}\Bernu(1/2)$ be 
iid sequences.

Let $\gamma_k,\delta_k\in(0,1)$ be the solutions to
\begin{equation*}
    \begin{cases}
        (1-\delta_k)(2\gamma_k-\gamma_k^2) &= 2^{-k}\\
        \gamma_k+\delta_k - \gamma_k\delta_k &= 2^{-k}.
    \end{cases}
\end{equation*}
We can check that $\gamma_k = \frac{2^{-k}-\delta_k}{1-\delta_k}$ and $\delta_k=1-2^{-k-1} -\sqrt{(1-2^{-k-1})^2-2^{-k}+2^{-2k}}$. In particular, $\gamma_k\sim \delta_k\sim 2^{-k-1}$ as $k\to\infty$.

\begin{itemize}
    \item Let $(B_k)_{k\geq 1}$ be a sequence of independent 
    random variables with $B_k\sim\Bernu(2^{-k})$. Put
    \begin{equation*}
        X_t^\mu = (1-B_k)Z_0 + B_k Z_t,\quad t\in\Scal_k,\, k\geq 1.
    \end{equation*}
    We define $\mu$ as the distribution of $(X_i^\mu)_{i\geq 1}$.
    \item Let $(C_i)_{i\geq 1}$ and $(D_k)_{k\geq 1}$ be independent sequences of independent
    random variables with $C_i\sim\Bernu(\gamma_k)$ for $i\in\Scal_k$ and $D_k\sim\Bernu(\delta_k)$. We let
    \begin{equation*}
         X_t^\nu = D_k Y_k + (1-D_k)((1-C_t)Z_0 + C_t Z_t),\quad t\in\Scal_k,\, k\geq 1.
    \end{equation*}
    We define $\nu$ as the distribution of $(X_i^\nu)_{i\geq 1}$.
\end{itemize}
We can easily check that these distributions have the same covariance matrix.

\paragraph{Computing the covariance matrix of $\mu$ and $\nu$}
Both sequences $(X^\mu_i)_{i\geq 1}$ and $(X^\nu_i)_{i\geq 1}$ are sequences of Bernoullis of parameter $1/2$ because they are mixtures of $Z_i$ and $Y_k$ that are independent 
$\Bernu(1/2)$. Now for any $i,j\in\Scal_k$ with $i\neq j$, one has
\begin{equation*}
    \Ebb[X_i^\mu X_j^\mu] = \frac{1}{2}\Pbb[B_k=0] + \frac{1}{4}\Pbb[B_k=1] = \frac{1}{2} - \frac{1}{2^{k+2}}.
\end{equation*}
Also, for $i\in\Scal_k$ and $j\in \Scal_l$ with $k\neq l$, one has
\begin{equation*}
    \Ebb[X_i^\mu X_j^\mu] = \frac{1}{2}\Pbb[B_k=B_l=0] + \frac{1}{4}(1-\Pbb[B_k=B_l=0]) = \frac{1}{2} - \frac{2^{-k} + 2^{-l} - 2^{-k-l}}{4}.
\end{equation*}

Next, we turn to the sequence $(X^\nu_i)_{i\geq 1}$. For any $i,j\in\Scal_k$ with $i\neq j$, one has
\begin{align*}
    \Ebb[X_i^\nu X_j^\nu] &= \frac{1}{2}\Pbb[D_k=1] + \frac{1}{2}\Pbb[D_k=0]\Pbb[C_i=C_j=0] + \frac{1}{4}\Pbb[D_k=0](1-\Pbb[C_i=C_j=0])\\
    &=\frac{1}{2} - \frac{(1-\delta_k)(2\gamma_k-\gamma_k^2)}{4}=\Ebb[X_i^\mu X_j^\mu].
\end{align*}
Last, for $i\in\Scal_k$ and $j\in \Scal_l$ with $k\neq l$, one has
\begin{align*}
    \Ebb[X_i^\nu X_j^\nu] &= \frac{1}{2}\Pbb[D_k=D_l=C_i=C_j=0] + \frac{1}{4}(1-\Pbb[D_k=D_l=C_i=C_j=0])\\
    &=\frac{1}{2} - \frac{1-(1-\gamma_k)(1-\delta_k)(1-\gamma_l)(1-\delta_l)}{4}\\
    &= \frac{1}{2} - \frac{1-(1-2^{-k})(1-2^{-l})}{4} = \Ebb[X_i^\mu X_j^\mu].
\end{align*}

As a result, if $\mu$ (resp. $\nu$) denotes the distribution of $(X^\mu_i)_{i\geq 1}$ (resp. $(X^\nu_i)_{i\geq 1}$), they both share the same covariance matrix.

At the high level, both distributions are constructed by block $\Scal_k$ such that within the block, the random variables are correlated up to a level $\approx 1-2^{-k}$. In the first distribution $\mu$, the correlation between variables $X_i$ for $i\in\Scal_k$ is made uniform through the decision variable $B_k$. Hence, to have convergence of the maximum deviation on $\Scal_k$ it suffices to handle a single decision variable $B_k$, which is a Bernoulli with parameter $2^{-k}$. Because these parameters are summable, we can control these variables uniformly for all $k$ sufficiently large.

In the second case for $\nu$, this correlation is made heterogeneous, by introducing decision variables $C_i$ for each $i\in \Scal_k$. These are independent and by taking $|\Scal_k|$ sufficiently large, one can enforce rare deviation events to happen with high probability for one of the variables $i\in\Scal_k$.

We formalize these ideas in the rest of this section. For clarity, for any $i\geq 1$, we denote by $\pn[i](\mu)$ and $\pn[i](\nu)$ the quantities $\pn[i]$ for $\mu$ and $\nu$ respectively. They share the same means $\p[i]$ so we need not make the distinction here. We also denote with an exponent $U^{(1)},U^{(2)},\ldots$ iid samples from any random variable $U$.

\paragraph{Expected maximum deviation for $\mu$.} Fix $k\geq 1$. One has
\begin{align*}
   \max_{i\in\Scal_k} |\pn[i](\mu) - \p[i]| &\leq \left|\frac{1}{n} \sum_{i=1}^n Z_0^{(i)} -\frac{1}{2}\right| + \max_{i\in\Scal_k} \left|\pn[i](\mu) - \frac{1}{n} \sum_{i=1}^n Z_0^{(i)}\right|\\
   &\leq \left|\frac{1}{n} \sum_{i=1}^n Z_0^{(i)} -\frac{1}{2}\right| + \frac{1}{n}\sum_{i=1}^n B_k^{(i)}.
\end{align*}
As a result,
\begin{align*}
    \Ebb \sup_{l\geq k}\max_{i\in\Scal_l} |\pn[i](\mu) - \p[i]| &\leq \Ebb \left|\frac{1}{n} \sum_{i=1}^n Z_0^{(i)} -\frac{1}{2}\right| + \Ebb \sup_{l\geq k} \frac{1}{n}\sum_{i=1}^n B_l^{(i)}\\
    &\leq \Ebb \left|\frac{1}{n} \sum_{i=1}^n Z_0^{(i)} -\frac{1}{2}\right| + \sum_{l\geq k}\Ebb \sqb{ \frac{1}{n}\sum_{i=1}^n B_l^{(i)} }\\
    &= \Ebb \left|\frac{1}{n} \sum_{i=1}^n Z_0^{(i)} -\frac{1}{2}\right| + 2^{-k+1}.
\end{align*}
Then,
\begin{align*}
    \Delta_n(\mu) &\leq \sum_{t=1}^{2^{(k-1)^3}}\Ebb|\pn[t](\mu)-\p[t]| + \Ebb \sup_{l\geq k}\max_{i\in\Scal_l} |\pn[i](\mu) - \p[i]|\\
    &\leq (1+2^{(k-1)^3})\Ebb|\pn[1](\mu)-\p[1]| + 2^{-k+1}.
\end{align*}
In particular, this shows that $\limsup_{n\to\infty}\Delta_n(\mu)\leq 2^{-k+1}$. Because this holds for all $k\geq 1$, we obtained $\Delta_n(\mu)\to 0$ as $n\to\infty$.

\paragraph{Expected maximum deviation for $\nu$.} Fix $n\geq 1$. We have
\begin{equation*}
    \Pbb(\exists i\in\Scal_n,\forall j\in[n],C_i^{(j)}=1,\, Z_i^{(j)}=1) = 1-\paren{1-\paren{\frac{\gamma_n}{2}}^n}^{|\Scal_n|} \geq 1-\exp \paren{-\paren{\frac{\gamma_n}{2}}^n |\Scal_n| }
\end{equation*}
Denote by $\Ecal_n$ this event. On this event, there exists $i\in\Scal_n$ such that
\begin{equation*}
    \pn[i](\nu) = \frac{1}{n}\sum_{j=1}^n {X_i^\nu}^{(j)} = \frac{1}{n}\sum_{i=1}^n D_n^{(j)} Y_n^{(j)} + 1-  D_n^{(j)}.
\end{equation*}
Then,
\begin{align*}
    \Delta_n(\nu) &\geq \Ebb\sqb{\1_{\Ecal_n} \sup_{i\in\Scal_n} |\pn[i](\nu) -\p[i]|}\\
    &\geq \Ebb\sqb{\1_{\Ecal_n} \left|\frac{1}{n}\sum_{j=1}^n (D_n^{(j)}Y_n^{(j)} + 1-D_n^{(j)}) -\frac{1}{2}\right|}\\
    &\geq \Ebb\sqb{\frac{1}{n}\sum_{j=1}^n (D_n^{(j)}Y_n^{(j)} + 1-D_n^{(j)}) -\frac{1}{2}} - \Pbb(\Ecal_n^c)\\
    &\geq \frac{1}{2}-\frac{\delta_n}{2} - \exp \paren{-\paren{\frac{\gamma_n}{2}}^n |\Scal_n| }.
\end{align*}
We recall that $\gamma_n\sim 2^{-n-1}$ so that $\paren{\frac{\gamma_n}{2}}^n |\Scal_n|\to \infty $ as $n\to\infty$. As a result, we obtain
\begin{equation*}
    \lim_{n\to\infty}\Delta_n(\nu) =\frac{1}{2}.
\end{equation*}
This ends the proof of \cref{thm:2procs}.

\paragraph{Higher-order moments}
\cref{thm:2procs} shows that knowledge of the covariance matrix is not sufficient to characterize the behavior of $\Delta_n(\mu)$ or not. We suspect that this negative result holds more generally, that is, for any $k\geq 1$, the $k$th order moments are not sufficient to characterize whether $\Delta_n(\mu)\to 0$ or not. As it turns out, the distributions $\mu$ and $\nu$ constructed for \cref{thm:2procs} also share the same $3$rd-order moments.

\begin{proposition}[Third-order moments do not characterize $\Delta_n$]
\label{prop:3rd_order_moments}
    There exist probability measures $\mu$ and $\nu$ on $\{0,1\}^\N$ that agree on their $3$rd-order moments,
\beq
\E_{X\sim\mu}[X_i X_j X_k] &=& \E_{X\sim\nu}[X_i X_j X_k], \qquad i,j,k\in\N,
\eeq
    while $\Delta_n(\mu) \ninf 0$ and $\Delta_n(\nu) \ninf \frac{1}{2}$.
\end{proposition}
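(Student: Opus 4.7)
The plan is to reuse the very distributions $\mu,\nu$ constructed in the proof of \cref{thm:2procs}: the convergence claims $\Delta_n(\mu)\ninf 0$ and $\Delta_n(\nu)\ninf \tfrac{1}{2}$ are already established there, so the entire task reduces to verifying the third-moment identity $\Ebb_{X\sim\mu}[X_i X_j X_l] = \Ebb_{X\sim\nu}[X_i X_j X_l]$ for every triple $(i,j,l)$. Because $X_t\in\{0,1\}$ forces $X_t^2 = X_t$, triples with repeated indices collapse to second- or first-order moments that already match by \cref{thm:2procs}. Hence I can restrict attention to distinct indices and split on their block memberships among the sets $\Scal_k = \{2^{(k-1)^3}<t\le 2^{k^3}\}$: (i) all three in the same block, (ii) two in one block and one in another, (iii) all three in distinct blocks.

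The unifying computational framework is to expand each expectation as a sum over ``mode configurations''. Under $\mu$, each coordinate $X_t^\mu$ is in mode $0$ (equal to $Z_0$) or mode $\star$ (equal to $Z_t$), with mode $0$ having probability $1-2^{-k}$ for $t\in\Scal_k$. Under $\nu$, the coordinate $X_t^\nu$ is in mode $Y$ (equal to $Y_k$), mode $0$, or mode $\star$, with mode-$0$ probability $(1-\delta_k)(1-\gamma_k) = 1-2^{-k}$ by the defining constraint $\gamma_k+\delta_k-\gamma_k\delta_k = 2^{-k}$. Since all primitive variables $Y_k, Z_0, Z_t$ are independent $\Bernu(1/2)$, the conditional expectation of $X_i X_j X_l$ given the mode assignments depends only on the number $s$ of indices in mode $0$, and equals $1/2, 1/4, 1/8, 1/8$ for $s=3,2,1,0$ respectively.

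For case (iii), the mode assignments of the three variables are mutually independent across blocks in both $\mu$ and $\nu$, and since the mode-$0$ probability per block agrees, the two expectations coincide term-by-term as polynomials in $(2^{-k},2^{-m},2^{-n})$. For case (ii), the two indices in the common block are coupled through $B_k$ in $\mu$ and through $(D_k,C_i,C_j)$ in $\nu$; enumerating the $5\times 3$ joint mode configurations and grouping by the number of mode-$0$ indices yields, after simplification using both $(1-\delta_k)(1-\gamma_k)=1-2^{-k}$ and $(1-\delta_k)\gamma_k(2-\gamma_k)=2^{-k}$, exactly the same value in both models. Case (i) is the most delicate: a direct sum over $(C_i,C_j,C_l)\in\{0,1\}^3$ gives
\[
\Ebb_\nu[X_i X_j X_l] = \tfrac{\delta_k}{2} + (1-\delta_k)\paren{\tfrac{1}{2} - \tfrac{3}{8}(2\gamma_k-\gamma_k^2)} = \tfrac{1}{2} - \tfrac{3}{8}\cdot 2^{-k},
\]
by the first constraint $(1-\delta_k)(2\gamma_k-\gamma_k^2) = 2^{-k}$, which matches $\Ebb_\mu[X_i X_j X_l] = (1-2^{-k})/2 + 2^{-k}/8$.

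The main obstacle is the bookkeeping in case (ii), where fifteen joint mode combinations have to be handled; however, grouping them by the count of mode-$0$ indices keeps the computation linear and reveals that precisely the two algebraic relations used to match second moments in \cref{thm:2procs} are what also make the third moments coincide. No modification of the construction is needed—it is this ``accidental'' sufficiency of the two constraints on $(\gamma_k,\delta_k)$ that delivers the proposition.
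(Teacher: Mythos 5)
Your proposal is correct and follows essentially the same route as the paper: it reuses the two distributions from \cref{thm:2procs}, collapses repeated indices to lower-order moments, and verifies the remaining triples block-by-block, with the two defining constraints on $(\gamma_k,\delta_k)$ doing exactly the work you identify. The only imprecision is the blanket claim that the conditional expectation given the mode assignments depends only on the number of mode-$0$ indices (with values $\tfrac12,\tfrac14,\tfrac18,\tfrac18$), which fails under $\nu$ when two or three same-block coordinates are simultaneously in mode $Y$ and hence share the single variable $Y_k$; your actual case computations (the $\delta_k/2$ term in case (i) and the $5\times 3$ enumeration in case (ii)) do account for this, so the argument stands.
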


\begin{proof}
    We simply check that the two distributions $\mu$ and $\nu$ from the proof of \cref{thm:2procs} also share the same 3rd-order moments. Let $i,j,k\in\N$ be three indices. When they are not all distinct, the desired equation follows from the proof of \cref{thm:2procs} because the coordinates $X_i$ are binary. Without loss of generality, we then suppose that they are distinct. We denote by $l(i)$ the index of the block corresponding to $i$, that is, such that $i\in\Scal_{l(i)}$. We define similarly $l(j)$ and $l(k)$.

    \paragraph{Case 1.} We first treat the simple case when $l(i)\notin\{l(j),l(k)\}$. Then, note that we can write
    \begin{equation*}
        X_i^\nu = (1-B_{l(i)}') Z_0 + B_{l(i)'}Z_{i,l(i)}'
    \end{equation*}
    where $B_{l(i)}':= D_{l(i)} + C_i - D_{l(i)} C_i$ and $Z_{i,l(i)}' = \1_{D_{l(i)}=1} Y_{l(i)} + \1_{D_{l(i)}=0} Z_i$. Note that $B_{l(i)}'\sim B_{l(i)}$ since $\delta_{l(i)} + \gamma_i - \delta_{l(i)}\gamma_i = 2^{-l(i)}$. Also,  $Z_{i,l(i)}'\sim\Bernu(\frac{1}{2})$, hence $Z_{i,l(i)}'\sim Z_i$. Further, they are independent, but more importantly, they are independent from all the variables that would be used to define $X_j$ and $X_k$, except for $Z_0$. As a result, all that remains to check is that
    \begin{equation*}
        \Ebb[Z_0 X^\mu_i X^\mu_j] = \Ebb[Z_0 X^\nu_i X^\nu_j].
    \end{equation*}
    We start with the distribution $\mu$.
    If $l(j)\neq l(k)$, then,
    \begin{align*}
        \Ebb[Z_0 X^\mu_i X^\mu_j] &= \frac{1}{2}\Pbb[B_{l(j)}=B_{l(k)}=0] + \frac{1}{8}(1-\Pbb[B_{l(j)}=B_{l(k)}=0])\\
        &= \frac{1}{2} - \frac{3}{8}(2^{-l(j)} + 2^{-l(k)} - 2^{-l(j)-l(k)}).
    \end{align*}
    On the other hand, if $l(j)= l(k)$, then
    \begin{equation*}
        \Ebb[Z_0 X^\mu_i X^\mu_j] = \frac{1}{2}\Pbb[B_{l(j)}=0] + \frac{1}{8}\Pbb[B_{l(j)}=1 ] = \frac{1}{2} - \frac{3}{8}2^{-{l(j)}}.
    \end{equation*}
    Next, for $\nu$, if $l(j)\neq l(k)$, we have
    \begin{align*}
        \Ebb[Z_0 X^\mu_i X^\mu_j] &= \frac{1}{2}\Pbb[D_{l(j)}=C_j=D_{l(k)}=C_k=0] + \frac{1}{8}(1-\Pbb[D_{l(j)}=C_j=D_{l(k)}=C_k=0])\\
        &= \frac{1}{2} - \frac{3}{8}(1-(1-\gamma_{l(j)})(1-\delta_{l(j)})(1-\gamma_{l(k)})(1-\delta_{l(k)}))  \\
        &= \frac{1}{2} - \frac{3}{8}(2^{-l(j)} + 2^{-l(k)} - 2^{-l(j)-l(k)}).
    \end{align*}
    Last, if $l(j)=l(k)$,
    \begin{align*}
        \Ebb[Z_0 X^\nu_i X^\nu_j] &= \frac{1}{2} \Pbb[D_{l(j)}=C_j=C_k=0] + \frac{1}{8}\Pbb[D_{l(j)}=0,C_j=C_k=1]\\
        &\qquad \qquad + \frac{1}{4} \Ebb[\1_{D_{l(j)}=1} + \1_{D_{l(j)}=0}(\1_{C_j=0}\1_{C_k=1}+ \1_{C_j=1}\1_{C_k=0})] \\
        &= \frac{1}{2} - \frac{4\gamma_{l(j)}+2\delta_{l(j)} - 4\gamma_{l(j)}\delta_{l(j)} - \gamma_{l(j)}^2 +\gamma_{l(j)}^2\delta_{l(j)}}{8}\\
        &=\frac{1}{2}-\frac{\gamma_{l(j)}+\delta_{l(j)} - \gamma_{l(j)}\delta_{l(j)}}{4} -\frac{(1-\delta_{l(j))}(2\gamma_{l(j)} - \gamma_{l(j)}^2)}{8} = \frac{1}{2}-\frac{3}{8}2^{-l(j)}.
    \end{align*}
    This concludes the first case.

    \paragraph{Case 2.} By symmetry, the only remaining case is if $l(i)=l(j)=l(k)$. For simplicity, we then denote $l:=l(i)$. Then,
    \begin{equation*}
        \Ebb[X_i^\mu X_j^\mu X_k^\mu] = \frac{1}{2}\Pbb[B_l=0] + \frac{1}{8}\Pbb[B_l=1] = \frac{1}{2} - \frac{3}{8}2^{-l}.
    \end{equation*}
    On the other hand,
    \begin{align*}
        \Ebb[X_i^\nu X_j^\nu X_k^\nu] &= \frac{1}{2}\Ebb[\1_{D_l=1} + \1_{D_l=0}\1_{C_i=0}\1_{C_j=0}\1_{C_k=0}] + \frac{3}{4}\Ebb[\1_{D_l=0}\1_{C_i=1}\1_{C_j=0}\1_{C_k=0}]\\
        &\qquad \qquad + \frac{3}{8}\Ebb[\1_{D_l=0}\1_{C_i=1}\1_{C_j=1}\1_{C_k=0}] + \frac{1}{8}\Ebb[\1_{D_l=0}\1_{C_i=1}\1_{C_j=1}\1_{C_k=1}]\\
        &=\frac{1}{2}- \frac{3(2\gamma_l - 2\gamma_l\delta_l - \gamma_l^2 + \gamma_l^2\delta_l)}{8}\\
        &= \frac{1}{2}- \frac{3(1-\delta_l)(2\gamma_l-\gamma_l^2)}{8} = \frac{1}{2}-\frac{3}{8}2^{-l}.
    \end{align*}
    This ends the proof that all $3$rd-order moments agree for $\mu$ and $\nu$.
\end{proof}

\section{Bounds on the expected maximum empirical deviation with \mathinhead{$\xi$}-covering numbers}
\label{sec:xi_covering_numbers}

The previous result \cref{thm:2procs} shows that having tight characterizations of when $\Delta_n(\mu)$ converges $0$, cannot be achieved by focusing solely on pair-wise correlations of the coordinates. Nevertheless, we are still able to give useful bounds on $\Delta_n(\mu)$ using such information. In this section, we focus on the metric space $(\N,\xi)$ where the metric $\xi$ is defined as in Eq~\eqref{eq:xi-metric} and provide necessary and sufficient conditions for the decay of the expected maximum empirical deviation $\Delta_n(\mu)$.

We start with proving \cref{thm:finite_cov_num} which shows that having finite $\xi$-covering numbers is necessary. 

\vspace{2mm}

\begin{proof}{\textbf{of \cref{thm:finite_cov_num}}}
    Let $\eps>0$ such that the $\eps$-covering number of  $(\N,\xi)$ is infinite and let $S_0\subset \N$ be an infinite set such that for all $i,j\in S_0$, $\xi(i,j) = \Pbb(X_i\neq X_j)\geq \eps$. Further, there must exist an interval $I=[p-\frac{\eta}{2},p+\frac{\eta}{2}]$ of length $\eta>0$ to be fixed later, such that $S_1:=S_0\cap\{i\in\N: \p[i]\in I\}$ is also infinite. Since for distinct $i,j\in S_1$,
    \begin{equation*}
        \eps \leq \xi(i,j) \leq \p[i] + \p[j] \leq 2\p[i] + \eta,
    \end{equation*}
    necessarily, for any $i\in S_1$, one has
    \begin{equation}\label{eq:lower_bound_p}
        \p[i] \geq \frac{\eps-\eta}{2}.
    \end{equation}
    By symmetry,  we also have
    \begin{equation}\label{eq:upper_bound_p}
        1-\p[i] \geq \frac{\eps-\eta}{2}.
    \end{equation}
    We will focus only on the indices in $S_1$, using
    \begin{equation*}
        \Delta_n(\mu) \geq \Ebb \sup_{i\in S} |\pn[i] - \p[i]|.
    \end{equation*}
    Hence, without loss of generality, we will suppose that $S_1=\N$.
    In the rest of the proof, we will use the notations $(X_i^{(1)})_i,(X_i^{(2)})_i,\ldots \overset{\iid}{\sim}\mu$ for an iid sequence of samples of $\mu$. Similarly, we will indicate by an exponent $(n)$ any event corresponding to the sample  sequence $(X_i^{(n)})_i$. For convenience, we will also consider a sample sequence $(X_i)_i\sim\mu$.
    
    Fix an arbitrary infinite subset $S\in\N$. We consider the event that $(X_i)_{i\in S}$ contains an infinite number of 0 and 1,
    \begin{equation*}
        \Ecal(S) :=\set{\min \paren{\sum_{i\in S}X_i, \sum_{i\in S}(1-X_i)} = \infty }.
    \end{equation*}
    On $\Ecal(S)^c$, the sequence $(X_i)_{i\in S}$ contains either a finite number of 0 or 1. We then denote by $\bar X(S)$ the random variable on $\{0,1\}$ equal to the (infinite) majority among the sequence $(X_i)_{i\in S}$, that is,
    \begin{equation*}
        \bar X(S) = \1_{\Ecal(S)^c} \cdot  \1\sqb{\sum_{i\in S}(1-X_i)<\infty }.
    \end{equation*}
    As a first step, we will show that $\Pbb(\Ecal(S))\geq \eps$. By Fatou's lemma, enumerating $S = \{i_1<i_2<\ldots\}$, one has
    \begin{align*}
        \Pbb(\Ecal(S)) &= \Pbb\paren{\limsup_{j\to\infty}  \1[X_{i_j}\neq X_{i_{j+1}}]>0}\\
        &=\Ebb \sqb{ \limsup_{j\to\infty} \1[X_{i_j}\neq X_{i_{j+1}}]  } \\
        &\geq \limsup_{j\to\infty} \Pbb(X_{i_j}\neq X_{i_{j+1}}) \geq \eps.
    \end{align*}
    Now supposing that $\Pbb(\Ecal(S))<1$ we define $\bar p(S) = \Ebb[\bar X(S) \mid \Ecal(S)^c]$ (otherwise, we can set it to $\bar p(S)=p$ for instance), the expected value of the majority vote on $(X_i)_{i\in S}$ provided that there is consensus (only a finite number of disagreements).

    \paragraph{Case 1.} We first consider the case when there exists an infinite subset $S\in\N$ such that for any infinite subset $S'\subset S$, one has $\Pbb(\Ecal(S'))<1$ and $\bar p(S) \leq p$. We now prove a lower bound on $\Delta_n(\mu)$ by showing that with good probability there is an index $i\in S$ for which $\pn[i]$ deviates from $\p[i]$ from below.

    To do so,  given the iid sequence $(X_i^{(1)})_i,(X_i^{(2)})_i,\ldots \overset{\iid}{\sim}\mu$, let $N(S)$ be the first index $n$ for which the event $\Ecal(S)$ holds. For every $n<N(S)$, there are only a finite number of disagreements in the sequences $(X_i^{(n)})_{i\in S}$, while $(X_i^{(n)})_{i\in S}$ contains an infinite number of 0. Hence, provided that $N(S)<\infty$, the set
    \begin{equation*}
        S(1) = \{i\in S, \forall n<N(S), X_i^{(n)}=\bar X^{(n)},\text{ and } X_i^{(N(S))} = 0\}
    \end{equation*}
    is infinite. All indices in $S(1)$ share the same values for the samples $n\in[N(S)]$. We denote that value $X_{S(1)}^{(n)}$ for convenience. We can now repeat the construction process with $S(1)$ under the almost-sure event $\{N(S)<\infty\}$: we can define the geometric random variable $N(S(1))$ which is the waiting time starting from $n=N(S)+1$ for the event $\Ecal(S(1))$ to occur. On the almost sure event $\{N(S)<\infty\}\cap \{N(S(1))<\infty\}$, this defines a new set $S(2)\subset S(1)$ for which all indices in $S(2)$ shared the same values for the samples in $n\in[N(S)+1,N(S)+N(S(1))]$, and we denote by $\bar X_{S(2)}^{(n)}$ these common values. 

    As a result of the construction, on an almost sure event $\Fcal$, we obtain a sequence of geometric random variables $(N(S(k)))_{k\geq 1}$ with parameter $\Pbb(\Ecal(S(k)))\geq \eps$, together with random sets $(S(k))_{k\geq 1}$ that are decreasing and all infinite. Intuitively, $\Fcal = \bigcap_k\{N(S(k))<\infty\}$. For convenience, letting $S(0)=S$, we denote by $N_k = N(S(0))+\ldots + N(S(k))$. Under $\Fcal$, we have that for any $N_{k-1}< n<N_k$,
    \begin{equation*}
        \Ebb[\bar X_{S(k)}^{(n)}\mid N_l,S(l) ,l\leq k] = \bar p(S(k)) \leq p
    \end{equation*}
    On the other hand, under $\Fcal$, we have $\bar X_{S(k)}^{(N_k)}=0$ by construction.
    
    Now fix $n\geq 1$. We recall that on $\Fcal$, there always exists an index $i_n\in\N$ for which $(X_i^{(l)})_{l\in [n]}$ coincides exactly with the sequence $\bar X_{S(0)}^{(1)},\ldots,\bar X_{S(0)}^{(N_0)},\bar X_{S(1)}^{(N_0+1)},\ldots, X_{S(1)}^{(N_1)},\ldots$ truncated at $n$ samples. We denote by $k_n$ the number of finished periods before sample $n$, that is, the index $k$ such that $N_k\leq n < N_{k+1}$. Combining the previous equations gives
    \begin{equation*}
        \Ebb\sqb{\sum_{l=1}^n X_{i_n}^{(l)} } \leq (n-\Ebb[k_n])  p.
    \end{equation*}
    We recall that $k_n$ corresponds to the maximum number of geometric variables with parameter at least $\eps$ such that the sum is at most $n$. Hence, $k_n$ is dominated by the maximum number of $\Gcal(\eps)$ random variables such that the sum is at most $n$. That is, if $(T_k)_{k\geq 1}\overset{\iid}{\sim}\Gcal(\eps)$ and we let $\tilde k_n$ be the index such that $\sum_{l=1}^k T_l \leq n < \sum_{l=1}^{k+1} T_l$, we have $\Ebb[k_n] \leq \Ebb[\tilde k_n].$
    Further, we have that
    \begin{align*}
        n < \Ebb\sqb{\sum_{k=1}^{\tilde k_n+1} T_k} &= \Ebb\sqb{\sum_{k\geq 1} T_k \1_{k\leq \tilde k_n+1}}\\
        &=\sum_{k\geq 1} \frac{1}{\eps} \Pbb(k\leq \tilde k_n+1)\\
        &= \frac{1+\Ebb[\tilde k_n]}{\eps}.
    \end{align*}
    In the second inequality, we used the Wald observation that knowing whether $k\leq \tilde k_n+1$ only requires knowing $T_1,\ldots,T_{k-1}$ (this corresponds exactly to the event $T_1+\ldots +T_{k-1}\leq n$). For small values of $n$, we can use the following simple bound
    \begin{equation*}
        \Ebb[k_n] \geq \Pbb(k_n\geq 1) = \Pbb(T_1\leq n) = 1-(1-\eps)^n.
    \end{equation*}
    This implies that $\Ebb[k_n]\geq \Ebb[\tilde k_n]> (\eps n-1)\lor 1-(1-\eps)^n$. We now show that this implies $\Ebb[k_n]\gtrsim n\eps$. For $n\geq \frac{2}{\eps}$, this already shows $\Ebb[k_n]\geq \frac{\eps n}{2}$. Note that the function $n\mapsto 1-(1-\eps)^n$ is concave. Given that its value is 0 for $n=0$, this gives for $n\leq \frac{2}{\eps}$,
    \begin{equation*}
        1-(1-\eps)^n \geq \frac{n\eps}{2} \paren{1-(1-\eps)^{2/\eps}} \geq \frac{n\eps}{2} (1-\mathe^{-2}) \geq \frac{n\eps}{3}.
    \end{equation*}
    This shows that in all cases, we obtained
    \begin{equation*}
        \Ebb[k_n] \geq \frac{n\eps}{3}.
    \end{equation*}
    In particular, recalling that $\p[i_n]\in [p-\frac{\eta}{2},p+\frac{\eta}{2}]$, and combining the previous equations we obtain
    \begin{align*}
        \Delta_n(\mu) &\geq \Ebb\sqb{  p-\frac{\eta}{2} -\pn[i_n] } \\
        &\geq  \frac{\Ebb[k_n]}{n}  p - \frac{\eta}{2}\\
        &\geq \frac{\eps}{6}(\eps-\eta) -  \frac{\eta}{2}.
    \end{align*}
    In the last inequality, we used the lower bound on $p$ from Eq~\eqref{eq:lower_bound_p}. We now turn to the second case.
    
    \paragraph{Case 2.} We now suppose that for every infinite subset $S\subset \N$, there exists an infinite subset $S'\subset S$ such that either $\Pbb(\Ecal(S'))=1$ or $\bar p(S')\geq p$. We now give a lower bound on $\Delta_n(\mu)$ by showing that with good probability there is an index $i\in\N$ for which $\pn[i]$ deviates from $\p[i]$ by above.

    To do so, we construct some decreasing random sets similarly to Case 1. To begin, there exists an infinite subset $S(0)\subset \N$ such that either $\Pbb(\Ecal(S(0)))=1$ or $\bar p(S(0))\geq p$. As in Case 1, let $N(S(0))$ be the number of samples to wait before the event $\Ecal(S(0))$ occurs. As before, on the event $\{N(S(0))<\infty\}$, the set
    \begin{equation*}
        \tilde S(1) = \{i\in S(0), \forall n<N(S(0)), X_i^{(n)}=\bar X^{(n)},\text{ and } X_i^{(N(S(0)))} = 1\}
    \end{equation*}
    is infinite. Hence, there exists a subset $S(1)\subset \tilde S(1)$ for which either $\Pbb(\Ecal(S(1)))=1$ or $\bar p(S(1))\geq p$. We can now repeat the process starting from $S(1)$. We use the same notations as in Case 1: the induction constructs under an event $\Fcal$ of full probability, some decreasing infinite sets $(S(k))_{k\geq 1}$, as well as their sequence of geometric random variables $(N(S(k)))_{k\geq 1}$ with parameter at least $\eps$. We denote $N_k = N(S(0))+\ldots + N(S(k))$. Similarly to before, under $\Fcal$, we have that for any $N_{k-1}<n<N_k$,
    \begin{equation*}
        \Ebb[\bar X_{S(k)}^{(n)}\mid N_l,S(l) ,l\leq k] = \bar p(S(k)) \geq  p,
    \end{equation*}
    and for any $k\geq 1$, by construction $\bar X_{S(k)}^{(N_k)}=1$. Hence, for a fixed number of samples $n\geq 1$, there exists an index $i_n$ for which until $n$ the values $(X_{i_n}^l)_{l\in[n]}$ coincide with that of $\bar X$. As before, if $k_n$ is the index $k$ for which $N_k\leq n<N_{k+1}$, we obtain
    \begin{equation*}
        \Ebb\sqb{\sum_{l=1}^n X_{i_n}^{(l)} } \geq np + (1-p)\Ebb[k_n].
    \end{equation*}
    As a result, we obtain
    \begin{align*}
        \Delta_n(\mu) \geq \Ebb [\pn[i_n] - \p[i_n]]
        &\geq \Ebb[\pn[i_n] - p]-\frac{\eta}{2}\\
        &\geq \frac{\Ebb[k_n]}{n}(1-p) - \frac{\eta}{2}\\
        &\geq \frac{\eps}{6}(\eps-\eta) -  \frac{\eta}{2}.
    \end{align*}
    In the last inequality, we used Eq~\eqref{eq:upper_bound_p}. Combining the two cases and noting that these hold for any value of $\eta>0$ yields the desired result.
\end{proof}

We next show that essentially is the tightest necessary conditions that can be obtained using only the covering number $\Ncal_\xi(\cdot)$.

\vspace{2mm}

\begin{proof}{\textbf{of Proposition~\ref{prop:ex_fast_converging}}}
    Fix such a non-increasing function $N:(0,1]\to\Nbb$. We start by constructing a distribution $\mu$, then we check that it has the same covering numbers as $N(\cdot)$. Last we prove the desired convergence $\Delta_n(\mu)\ninf  0$.

    \paragraph{Constructing the distribution $\mu$.} Since we will only focus on covering numbers for $\eps\in(0,\frac{1}{2}]$ anyways, without loss of generality, we suppose that $N(\eps)=1$ for $\eps\in(\frac{1}{2},1]$. Note that because $N$ is non-increasing, it is discontinuous on a countable (potentially finite) number of points $(\eps_k)_{k\geq 1}$, which we ordered by decreasing order, that is $0<\eps_{k+1}<\eps_k\leq\frac{1}{2}$ for all $k\geq 1$. For convenience, let $\eps_0=\frac{1}{2}$. Since $N$ takes values only on integers $\N$, we can also define the sequence $(N_k)_{k\geq 1}$ such that $N_k$ is the value taken by $N$ on the interval $(\eps_{k-1},\eps_k)$, with the convention $N_1=1$ in the specific case when $\eps_0=\eps_1$.

    Let $(Z_i)_{i\geq 1}\overset{iid}{\sim}\Bernu(\frac{1}{2})$ an iid sequence of Bernoulli random variables and $(U_k)_{k\geq 1}\overset{iid}{\sim}\Ucal([0,1])$ be an independent iid sequence of uniform random variables in $[0,1]$. We construct recursively the sequence $(X_i)_{i\geq 1}$ together with a sequence of decreasing events $(\Ecal_k)_{k\geq 1}$. These will be constructed by level $k\geq 1$ corresponding to the points of discontinuity of the covering function $(\eps_k)_{k\geq 1}$. We first pose $X_1 = Z_1$ and $\Ecal_0 = \Omega$ the full event. Now for $k\geq 1$, suppose we have constructed the random variables $X_i$ for $i\leq N_k$ together with decreasing events $\Ecal_l$ for $l\leq k-1$ that $\Pbb(\Ecal_l) = 2\eps_l$.
    
    We then construct the event $\Ecal_k$ via
    \begin{equation*}
        \Ecal_k := \Ecal_{k-1} \cap \set{ U_k \leq \frac{\eps_{k}}{\eps_{k-1}} }
    \end{equation*}
    and new random variables as follows,
    \begin{equation*}
        X_i := \1[\Ecal_k^c] Z_1 + \1[\Ecal_k] Z_i,\quad N_k<i\leq N_{k+1}.
    \end{equation*}
    We can directly check that $\Pbb(\Ecal_k) = \Pbb(\Ecal_{k-1}) \Pbb(U_k\leq \eps_k/\eps_{k-1}) = 2\eps_k$, which concludes the construction.

    We first note that $\Ecal_k$ only depends on the variables $U_l$ for $l\leq k$ and as such is independent of $Z_1$. In particular, this shows that $X_i\sim\Bernu(\frac{1}{2})$ for all $i\geq 1$. Also, for $i\geq 2$ with $N_k<i\leq N_{k+1}$ for some $k\geq 2$, this implies
    \begin{equation*}
        \xi(i,1) = \Pbb(X_i\neq X_1) = \frac{\Pbb(E_k)}{2} = \eps_k.
    \end{equation*}
    We next compute distances between any two distinct components $i<j\geq 2$. We let $k_i,k_j\geq 1$ such that $N_{k_i}<i\leq N_{k_i+1}$ and similarly for $j$.
    \begin{equation*}
        \xi(i,j) = \Pbb(X_i\neq X_j)  = \frac{\Pbb(\Ecal_{k_i}\cup\Ecal_{k_j})}{2} = \frac{\Pbb(\Ecal_{k_i})}{2} = \eps_{k_i}.
    \end{equation*}
    In the third equality, we used the fact that the events $(\Ecal_k)_{k\geq 0}$ are decreasing.

    \paragraph{Computing the covering numbers of $\mu$} We clearly have $\Ncal_\xi(\frac{1}{2})=1$ because $\xi(1,i)\leq \frac{1}{2}$ for all $i\geq 1$. Next, for any fixed $\eps\in(0,\frac{1}{2})$, let $k\geq 1$ such that $\eps\in[\eps_k,\eps_{k-1})$. We first note that the random variables $[N_k]$ form an $\eps$-cover of $(\N,\xi)$. Indeed, for $i>N_k$, if $N_{k_i}<i\leq N_{k_i+1}$, one has $k_i\geq k$ so that
    \begin{equation*}
        \xi(1,i) = \eps_{k_i}\leq \eps_k \leq \eps.
    \end{equation*}
    As a result, $\Ncal_\xi(\eps) \leq N_{k-1}$. On the other hand, for any $2\leq i,\leq N_k$, we observe that for any $j\neq i$, $\xi(i,j)\geq \eps_{k-1}>\eps$. As a result, an $\eps$-cover of $(\Nbb,\xi)$ must contain all elements $\{2,\ldots,N_k\}$ which has $N_k-1$ elements. Note that this set does not $\eps$-cover the element $1$ since $\xi(1,i)\geq \eps_{k-1}>\eps$ for all $i\in\{2,\ldots,N_k\}.$ Hence, the $\eps$-cover must have at least $N_k$ elements. Together with the previous remark, we obtained
    \begin{equation*}
        \Ncal_\xi(\eps) = N_k,\quad \eps\in[\eps_k,\eps_{k-1}).
    \end{equation*}
    As a result, wit $E = \{\eps_k,k\geq 1\}$, for any $\eps\in(0,\frac{1}{2}]\setminus E$, we obtained $\Ncal_\xi(\eps) = N(\eps)$.

    \paragraph{Proving the convergence.} We show that $\Delta_n(\mu)\ninf 0$ by checking that it satisfies the condition from \cref{thm:more_general_suff_condition}, proved later in \ref{sec:examples}. The proof of that result is completely separate so that there is no circular logic. We take $K=1$ and for $\eps>0$, fix $k\geq 1$ such that $\eps_k\leq \eps/2$. We simply take one event $E_1 = \Ecal_k$, and we use the centers $J=[N_k]$. For any $i>N_k$, letting $k_i\geq k$ such that $N_{k_i}<i\leq N_{k_i+1}$, we indeed have
    \begin{equation*}
        \{X_i\neq X_1\} \subset \Ecal_{k_i}\subset \Ecal_k=E_1.
    \end{equation*}
    This ends the proof of the proposition.
\end{proof}

Because of the necessary condition in \cref{thm:finite_cov_num}, there is no loss of generality
in assuming
that for any $\eps>0$, the $\eps$-covering number for $(\N,\xi)$ is finite: $\Ncal_\xi(\eps)<\infty$. On the other hand, \cref{thm:sufficient_condition_covering_nb} shows that the condition given in Eq~\eqref{eq:sufficient_condition}, which we restate here for convenience,
\begin{equation*}
    \int_0^1 \Ncal_\xi (\eps)\mathd\eps<\infty,
\end{equation*}
is a sufficient condition for $\Delta_n(\mu)\to 0$.

\vspace{2mm}

\begin{proof}{\textbf{of \cref{thm:sufficient_condition_covering_nb}}}
    We first note that this condition is equivalent to $\sum_{k\geq 0} 2^{-k} \Ncal_\xi (2^{-k})<\infty$. Indeed, $\Ncal_\xi(\cdot)$ is non-increasing, hence
    \begin{equation*}
        \sum_{k\geq 0} 2^{-k-1} \Ncal_\xi (2^{-k}) \leq \int_0^1 \Ncal_\xi(\eps)\mathd\eps \leq \sum_{k\geq 1} 2^{-k} \Ncal_\xi (2^{-k}).
    \end{equation*}
    To obtain our bounds on $\Delta_n(\mu)$, we will use chaining techniques. First, let $\Scal_\xi(\eps)$ be an $\eps$-covering of $(\N,\xi)$ with minimal cardinality $\Ncal_\xi(\eps)$. For any $k\geq 1$ and $i\in\Scal_\xi(2^{-k})$, we denote by $\hat i$ the $\xi$-nearest neighbor of $i$ within $\Scal_\xi(2^{-k+1})$. In particular, by definition of $\Scal_\xi(2^{-k+1})$ we have $\xi(i,\hat i)\leq 2^{-k+1}$.

    Fix $\eps\in(0,1]$ and consider $(X_i)_{i\geq 1}\sim\mu$. By hypothesis, there exists $k_\eps\geq \log_2\frac{1}{\eps}$ such that
    \begin{equation*}
        \sum_{k\geq k_\eps} 2^{-k} \Ncal_\xi (2^{-k}) \leq \eps.
    \end{equation*}
    Then,
    \begin{align*}
        \Pbb\paren{\exists i\in \bigcup_{k\geq k_\eps} \Scal_\xi(2^{-k}) , X_i\neq X_{\hat i}} &\leq
        \Ebb\sqb{ \abs{\set{i\in \bigcup_{k\geq k_\eps} \Scal_\xi(2^{-k}): X_i\neq X_{\hat i}}}} \\
        &= \sum_{k\geq k_\eps} \sum_{i\in \Scal_\xi(2^{-k})} \Pbb(X_i\neq X_{\hat i}) \\
        &\leq  \sum_{k\geq k_\eps} 2^{-k} \Ncal_\xi (2^{-k}) \leq \eps. 
    \end{align*}
    Hence, if $\Ecal_\eps:= \{\forall i\in \bigcup_{k\geq k_\eps} \Scal_\xi(2^{-k}) , X_i= X_{\hat i} \}$, we have $\Pbb(\Ecal_\eps) \geq 1-\eps$.

    For any $i\in \Scal:=\bigcup_{k\geq 0}\Scal_\xi(2^{-k})$, let $k\geq 0$ be such that $i\in\Scal_\xi(2^{-k})\setminus \Scal_\xi(2^{-k+1})$, with the convention $\Scal_\xi(2)=\emptyset$. We can then construct the sequence $i_k=i,i_{k-1},\ldots, i_0$ such that $i_{p-1}=\hat i_p$. We denote by $i(\eps)$ the first element of this list within $ \Scal_\eps:=\bigcup_{k\leq k_\eps}\Scal_\xi(2^{-k})$, that is $i(\eps) = i_{k_\eps\land k}$. Note that by the triangle inequality,
    \begin{equation*}
        \xi(i,i(\eps))\leq \xi(i_k,i_{k-1}) + \ldots + \xi(i_{k_\eps\land k+1}, i_{k_\eps\land k}) \leq \sum_{k-1\leq l\leq k_\eps} 2^{-l} \leq 2^{-k_\eps +1} \leq 2\eps.
    \end{equation*}
    Also, under $\Ecal_\eps$, for any $i\in\Scal$, one has $X_i = X_{i(\eps)}$.
   
    We now focus on indices in $i\notin \Scal$ and aim to prove an equivalent equation. Fix any $k\geq 0$, we denote by $i_k$ the $\xi$-nearest neighbor of $i$ within $\Scal_\xi(2^{-k})$. Because $\Scal_\eps$ is finite, we can fix some element that we denote $i(\eps)$ that appears infinitely often in the sequence $(i_k(\eps))_{k\geq 0}$. In particular, for any $k\geq 0$ such that $i_k(\eps)=i(\eps)$, we have
    \begin{equation*}
        \xi(i,i(\eps))\leq \xi(i,i_k) + \xi(i_k,i_k(\eps)) \leq 2^{-k} + 2\eps.
    \end{equation*}
    Because this holds for an infinite number of indices $k\geq 0$, this shows that $\xi(i,i(\eps))\leq 2\eps$. Next, since $\Pbb(X_i\neq X_{i(k)})\leq 2^{-k}$ for $k\geq 0$, which forms a summable sequence, by the Borel-Cantelli lemma and the union bound, the following event
    \begin{equation*}
        \Fcal:= \{\forall i\notin \Scal, \exists \hat k_i\geq 0,  \forall k\geq \hat k_i, X_i = X_{i(k)} \},
    \end{equation*}
    has probability one.
    As a result, on $\Ecal_\eps \cap \Fcal$, the sequence $(X_{i(k)})_{k\geq 0}$ is equal to $X_i$ for $k$ large enough but also contains an infinite number of times the value $X_{i(\eps)}$. Hence $X_i=X_{i(\eps)}$. In summary, we obtained
    \begin{equation*}
        \Ecal_\eps\cap\Fcal\subset \{\forall i\geq 1, X_i = X_{i(\eps)}\},
    \end{equation*}
    and for all $i\geq 1$, $\Pbb(X_i\neq X_{i(\eps)}) = \xi(i,i(\eps))\leq 2\eps$.

    Now consider iid samples $(X_i^{(n)})_{i\geq 1}\sim \mu$ for $n\geq 1$ and denote by $\Ecal_\eps^{(n)}$ and $\Fcal^{(n)}$ the corresponding event. In particular, $(\mathbbm{1}(\Ecal_\eps^{(n)}\cap\Fcal^{(n)}))_{n\geq 1}$ is an iid Bernoulli sequence of parameter $\Pbb(\Ecal)$. For any $i\geq 1$,
    \begin{align*}
        |\pn[i] - \p[i]| &\leq |\pn[i] - \pn[i(\eps)]| + |\pn[i(\eps)] - \p[i(\eps)]| + |\p[i(\eps)]-\p[i]|\\
        &\leq \frac{1}{n}\sum_{m=1}^n \mathbbm{1}((\Ecal_\eps^{(m)}\cap\Fcal^{(m)})^c)  + \sup_{j\in\Scal_\eps} |\pn[j] - \p[j]|  +  \Pbb(X_i\neq X_{i(\eps)})\\
        &\leq \frac{1}{n}\sum_{i=1}^n \mathbbm{1}((\Ecal_\eps^{(m)}\cap\Fcal^{(m)})^c)  + \sup_{j\in\Scal_\eps} |\pn[j] - \p[j]|  +  2\eps.
    \end{align*}
    As a result,
    \begin{align*}
        \Delta_n(\mu) = \Ebb \sup_{i\geq 1} |\pn[i] - \p[i]| &\leq (1-\Pbb(\Ecal)) + \Ebb \sup_{j\in\Scal_\eps} |\pn[j] - \p[j]| + 2\eps \\
        &\leq \Ebb \sup_{j\in\Scal_\eps} |\pn[j] - \p[j]| + 3\eps.
    \end{align*}
    In particular, this gives $\limsup_{n\to\infty}\Delta_n(\mu) \leq 3\eps$ because $\Scal_\eps$ is finite. This holds for any $\eps$, which ends the proof that $\Delta_n(\mu)\to 0$ as $n \to \infty$.
    
    Additionally, the previous equation holds for any $\eps$, hence
    \begin{equation*}
        \Delta_n(\mu) \leq \inf_{\eps\in(0,1]} 3\eps + \Ebb \sup_{j\in\Scal_\eps} |\hat \p[j] - \p[j]|.
    \end{equation*}
    The right-most term can be upper bounded using the upper bound on maximum empirical mean deviations for general distributions on $\{0,1\}^\N$ from \citet[Corollary 3]{blanchard2023tight}. Precisely, we need to order the elements $j\in \Scal_\eps$ by decreasing order of $\p[j]\land (1-\p[j])$. The worst case upper bound is achieved when all these mean probabilities are equal to $\frac{1}{2}$. Hence, \citet[Corollary 3]{blanchard2023tight} yields
    \begin{equation*}
        \Ebb \sup_{j\in\Scal_\eps} |\hat \p[j] - \p[j]| \lesssim 1\land \paren{\sqrt{\frac{\log (1+|\Scal_\eps|)}{n}} + \frac{\log(1+|\Scal_\eps|)}{n\log\paren{2+\frac{2\log(1+|\Scal_\eps|)}{n}}}} \asymp 1\land \sqrt{\frac{\log (1+|\Scal_\eps|)}{n}}.
    \end{equation*}
    Putting the upper bounds together yields the following bound
    \begin{equation*}
        \Delta_n(\mu) \lesssim \inf_{\eps\in(0,1]} \eps + \sqrt{\frac{\log (\Ncal_\xi(\eps)+1)}{n}} \leq   \inf_{\eps\in(0,1]} \eps + \sqrt{\frac{\log (1+\frac{1}{\eps}\int_0^1\Ncal_\xi(\eta)d\eta)}{n}}
    \end{equation*}
    Recalling the notation $C_\mu:=\int_0^1\Ncal_\xi(\eta)d\eta$, and using the value of $\eps_{\mu,n}:= 1\land \sqrt{\log(n(1+C_\mu))/n}$, we then obtain $\Delta_n(\mu)\leq 1$ if $\eps_{\mu,n}=1$, or if $\eps_n<1$,
    \begin{equation*}
        \Delta_n(\mu) \lesssim   \eps_{\mu,n} + \sqrt{\frac{\log (1+\frac{1}{\eps_{\mu,n}}C_\mu)}{n}} \asymp \sqrt{\frac{\log(n(1+C_\mu))}{n}}.
    \end{equation*}
    This ends the proof of the proposition.
\end{proof}

We next show that this sufficient condition, Eq~\eqref{eq:sufficient_condition}, is as tight as can be using the covering numbers $\Ncal_\xi(\cdot)$. We recall that this cannot be a necessary condition in view of \cref{thm:2procs}---instead, we show in Proposition~\ref{prop:example_covering_nb} that if the covering numbers do not satisfy Eq~\eqref{eq:sufficient_condition}, one can construct some distribution with (almost) the same covering numbers but for which the expected maximum deviation does not converge to $0$.

\vspace{2mm}

\begin{proof}{\textbf{of Proposition~\ref{prop:example_covering_nb}}}
    The proof has three steps, first we define the distribution $\mu$, then we prove that its covering numbers coincide with $N(\cdot)$, then we show that $\Delta_n(\mu)\to \frac{1}{2}$.
    
    \paragraph{Constructing the distribution $\mu$.} Fix the non-increasing function $N:(0,1]\to\N$. We use similar notations as in the proof of Proposition~\ref{prop:ex_fast_converging}. Since we will only focus on covering numbers for $\eps\in(0,\frac{1}{2}]$ anyways, without loss of generality we suppose that $N(\eps)=1$ for $\eps\in(\frac{1}{2},1]$. Given such a function $N$, we start by constructing an equidistant directed tree (with edge lengths) representing the function. Note that because $N$ is non-increasing, it is discontinuous on a countable (potentially finite) number of points $(\eps_k)_{k\geq 1}$, which we ordered by decreasing order, that is $0<\eps_{k+1}<\eps_k\leq\frac{1}{2}$ for all $k\geq 1$. For convenience, let $\eps_0=\frac{1}{2}$. Since $N$ takes values only on integers $\N$, we can also define the sequence $(N_k)_{k\geq 1}$ such that $N_k$ is the value taken by $N$ on the interval $(\eps_{k-1},\eps_k)$, with the convention $N_1=1$ in the specific case when $\eps_0=\eps_1$.

    We construct the tree by recursion, starting for $k=0$ with only a root denoted $v(0,1)$. For context, inner nodes will be denoted $v(k,p)$ where $k$ will correspond to level $\eps_k$ and $p$ will correspond to the index of the node by order of construction.
    Now suppose that we have constructed the tree up to level $k\geq 0$ and that we have constructed a total of $N_k$ nodes. At level $k+1$, we construct $N_{k+1}-N_k$ new nodes. If for $k\in[N_{k+1}-N_k]$, we link the node $v(k+1,N_k+l)$ to some node $v(k',p')$, the length of the edge is set to $\eps_{k'} - \eps_{k+1}$. 
    Deciding of which node to link to the new nodes at level $k+1$ is done in a specific manner to balance the construction of the overall tree. A formal construction of the tree is given in the pseudo-code Algorithm~\ref{alg:construct_tree}. Intuitively, the construction of the tree emulates the construction of a full binary tree: if we had $N_{k+1}-N_k=1$ for all $k\geq 1$, the output tree would exactly be a binary tree that is constructed layer by layer in order. Because the jumps $N_{k+1}-N_k$ may be larger, the tree is instead the binary tree where some edges are collapsed. To keep at all times a balance in the binary tree, splits are added according to a fractal manner. For layer $r$, we split $2^r$ edges which are denoted $e(r,s)$ for $s\in\{0,1,\ldots,2^r-1\}$, in the following order:
    \begin{align*}
        \Order(0) &:= (0),\\
        \Order(r) &:= (2i, i\in \Order(r-1)) \cup (2i+1, i\in \Order(r-1)) ,\quad r\geq 1.
    \end{align*}
    For instance, $\Order(1) = (0,1)$, $\Order(2) = (0,2,1,3)$, and $\Order(3) = (0,4,2,6,1,5,3,7)$. A visualization of the trees constructed for two different covering number functions $N(\cdot)$ are given in Figure~\ref{fig:skeleton_tree}, one for the simpler case when $N_{k+1}-N_k=1$ for all $k\geq 1$ and one for the general case.
    
    \begin{algorithm}
    
    \hrule height\algoheightrule\kern3pt\relax
    \caption{Constructing the tree skeleton for the distribution in Proposition~\ref{prop:example_covering_nb}}\label{alg:construct_tree}
    \hrule height\algoheightrule\kern3pt\relax
    
    \KwData{$(\eps_k)_{k\geq 1}, (N_k)_{k\geq 1}$}
    \KwResult{An equidistant skeleton tree $\Tcal$}
    Initialize $\Tcal$ as a root $v(0,1)$ at level $\frac{1}{2}$ with an exiting edge denoted $e(0,0)$\;

    $k\gets 1$, $n\gets 1$ and $m\gets 1$\;
    
    \For{$r\geq 0$}{
        \For{$s\in \Order(r)$}{
            Split edge $e(r,s)$ in two at level $\eps_k$. That is:

            \eIf{the top end node of edge $e(r,s)$ is a node $v(k',n')$ with $k'<k$}{
                Create a node $v(k,n+1)$ at level $\eps_k$ to end edge $e(r,s)$: $e(r,s)$ has length $\eps_{k'}-\eps_k$\;

                $n\gets n+1$\;

                Create two edges $e(r+1,2s), e(r+1,2s+1)$ exiting from node $v(k,n+1)$\;
            }{
                Delete edge $e(r,s)$ and create two edges $e(r+1,2s), e(r+1,2s+1)$ exiting from node $v(k',n') = v(k,n')$
            }
            $m\gets m+1$\;
            
            \lIf{$m=N_{k+1}$}{
                $k\gets k+1$
            }
        }
    }

    \hrule
    \end{algorithm}

    \begin{figure}[ht]
        \centering

\tikzset{every picture/.style={line width=0.75pt}}       

\begin{tikzpicture}[x=0.75pt,y=0.75pt,yscale=-1,xscale=1]

\draw [line width=1.5]    (28.25,285.5) -- (28.25,32.47) ;
\draw [shift={(28.25,29.47)}, rotate = 90] [color={rgb, 255:red, 0; green, 0; blue, 0 }  ][line width=1.5]    (14.21,-4.28) .. controls (9.04,-1.82) and (4.3,-0.39) .. (0,0) .. controls (4.3,0.39) and (9.04,1.82) .. (14.21,4.28)   ;

\draw [line width=1.5]    (23.15,54.23) -- (33.35,54.23) ;

\draw [line width=1.5]    (23.15,82.57) -- (33.35,82.57) ;

\draw [line width=1.5]    (23.15,108.65) -- (33.35,108.65) ;

\draw [line width=1.5]    (23.15,148.89) -- (33.35,148.89) ;

\draw [line width=1.5]    (23.15,277.86) -- (33.35,277.86) ;

\draw  [fill={rgb, 255:red, 0; green, 0; blue, 0 }  ,fill opacity=1 ] (168.63,54.89) .. controls (168.63,53.33) and (169.9,52.06) .. (171.47,52.06) .. controls (173.03,52.06) and (174.3,53.33) .. (174.3,54.89) .. controls (174.3,56.46) and (173.03,57.73) .. (171.47,57.73) .. controls (169.9,57.73) and (168.63,56.46) .. (168.63,54.89) -- cycle ;

\draw  [fill={rgb, 255:red, 0; green, 0; blue, 0 }  ,fill opacity=1 ] (168.63,82.57) .. controls (168.63,81.01) and (169.9,79.74) .. (171.47,79.74) .. controls (173.03,79.74) and (174.3,81.01) .. (174.3,82.57) .. controls (174.3,84.14) and (173.03,85.41) .. (171.47,85.41) .. controls (169.9,85.41) and (168.63,84.14) .. (168.63,82.57) -- cycle ;

\draw    (171.47,54.89) -- (171.47,82.57) ;

\draw  [fill={rgb, 255:red, 0; green, 0; blue, 0 }  ,fill opacity=1 ] (110.25,108.65) .. controls (110.25,107.08) and (111.52,105.81) .. (113.08,105.81) .. controls (114.65,105.81) and (115.92,107.08) .. (115.92,108.65) .. controls (115.92,110.21) and (114.65,111.48) .. (113.08,111.48) .. controls (111.52,111.48) and (110.25,110.21) .. (110.25,108.65) -- cycle ;

\draw  [fill={rgb, 255:red, 0; green, 0; blue, 0 }  ,fill opacity=1 ] (234.95,120.93) .. controls (234.95,119.37) and (236.22,118.1) .. (237.79,118.1) .. controls (239.35,118.1) and (240.62,119.37) .. (240.62,120.93) .. controls (240.62,122.5) and (239.35,123.76) .. (237.79,123.76) .. controls (236.22,123.76) and (234.95,122.5) .. (234.95,120.93) -- cycle ;

\draw    (113.08,108.65) -- (171.47,82.57) ;

\draw    (171.47,82.57) -- (237.79,120.93) ;

\draw  [fill={rgb, 255:red, 0; green, 0; blue, 0 }  ,fill opacity=1 ] (75.86,148.89) .. controls (75.86,147.33) and (77.13,146.06) .. (78.7,146.06) .. controls (80.26,146.06) and (81.53,147.33) .. (81.53,148.89) .. controls (81.53,150.46) and (80.26,151.73) .. (78.7,151.73) .. controls (77.13,151.73) and (75.86,150.46) .. (75.86,148.89) -- cycle ;

\draw [line width=1.5]    (23.15,120.93) -- (33.35,120.93) ;

\draw    (78.7,148.89) -- (113.08,108.65) ;

\draw  [fill={rgb, 255:red, 0; green, 0; blue, 0 }  ,fill opacity=1 ] (137.08,172.7) .. controls (137.08,171.14) and (138.35,169.87) .. (139.91,169.87) .. controls (141.48,169.87) and (142.75,171.14) .. (142.75,172.7) .. controls (142.75,174.27) and (141.48,175.53) .. (139.91,175.53) .. controls (138.35,175.53) and (137.08,174.27) .. (137.08,172.7) -- cycle ;

\draw  [fill={rgb, 255:red, 0; green, 0; blue, 0 }  ,fill opacity=1 ] (204.53,161.36) .. controls (204.53,159.8) and (205.8,158.53) .. (207.37,158.53) .. controls (208.93,158.53) and (210.2,159.8) .. (210.2,161.36) .. controls (210.2,162.93) and (208.93,164.2) .. (207.37,164.2) .. controls (205.8,164.2) and (204.53,162.93) .. (204.53,161.36) -- cycle ;

\draw  [fill={rgb, 255:red, 0; green, 0; blue, 0 }  ,fill opacity=1 ] (268.56,185.36) .. controls (268.56,183.79) and (269.82,182.53) .. (271.39,182.53) .. controls (272.96,182.53) and (274.22,183.79) .. (274.22,185.36) .. controls (274.22,186.93) and (272.96,188.19) .. (271.39,188.19) .. controls (269.82,188.19) and (268.56,186.93) .. (268.56,185.36) -- cycle ;

\draw  [fill={rgb, 255:red, 0; green, 0; blue, 0 }  ,fill opacity=1 ] (52.62,205.2) .. controls (52.62,203.63) and (53.89,202.36) .. (55.46,202.36) .. controls (57.02,202.36) and (58.29,203.63) .. (58.29,205.2) .. controls (58.29,206.76) and (57.02,208.03) .. (55.46,208.03) .. controls (53.89,208.03) and (52.62,206.76) .. (52.62,205.2) -- cycle ;

\draw  [fill={rgb, 255:red, 0; green, 0; blue, 0 }  ,fill opacity=1 ] (185.83,213.13) .. controls (185.83,211.57) and (187.1,210.3) .. (188.66,210.3) .. controls (190.23,210.3) and (191.49,211.57) .. (191.49,213.13) .. controls (191.49,214.7) and (190.23,215.97) .. (188.66,215.97) .. controls (187.1,215.97) and (185.83,214.7) .. (185.83,213.13) -- cycle ;

\draw  [fill={rgb, 255:red, 0; green, 0; blue, 0 }  ,fill opacity=1 ] (117.81,227.87) .. controls (117.81,226.31) and (119.08,225.04) .. (120.64,225.04) .. controls (122.21,225.04) and (123.48,226.31) .. (123.48,227.87) .. controls (123.48,229.44) and (122.21,230.71) .. (120.64,230.71) .. controls (119.08,230.71) and (117.81,229.44) .. (117.81,227.87) -- cycle ;

\draw  [fill={rgb, 255:red, 0; green, 0; blue, 0 }  ,fill opacity=1 ] (245.34,232.97) .. controls (245.34,231.41) and (246.61,230.14) .. (248.18,230.14) .. controls (249.74,230.14) and (251.01,231.41) .. (251.01,232.97) .. controls (251.01,234.54) and (249.74,235.81) .. (248.18,235.81) .. controls (246.61,235.81) and (245.34,234.54) .. (245.34,232.97) -- cycle ;

\draw  [fill={rgb, 255:red, 0; green, 0; blue, 0 }  ,fill opacity=1 ] (90.6,243.74) .. controls (90.6,242.18) and (91.87,240.91) .. (93.43,240.91) .. controls (95,240.91) and (96.27,242.18) .. (96.27,243.74) .. controls (96.27,245.31) and (95,246.58) .. (93.43,246.58) .. controls (91.87,246.58) and (90.6,245.31) .. (90.6,243.74) -- cycle ;

\draw  [fill={rgb, 255:red, 0; green, 0; blue, 0 }  ,fill opacity=1 ] (220.4,250.55) .. controls (220.4,248.98) and (221.67,247.71) .. (223.24,247.71) .. controls (224.8,247.71) and (226.07,248.98) .. (226.07,250.55) .. controls (226.07,252.11) and (224.8,253.38) .. (223.24,253.38) .. controls (221.67,253.38) and (220.4,252.11) .. (220.4,250.55) -- cycle ;

\draw  [fill={rgb, 255:red, 0; green, 0; blue, 0 }  ,fill opacity=1 ] (154.08,253.95) .. controls (154.08,252.38) and (155.35,251.11) .. (156.92,251.11) .. controls (158.48,251.11) and (159.75,252.38) .. (159.75,253.95) .. controls (159.75,255.51) and (158.48,256.78) .. (156.92,256.78) .. controls (155.35,256.78) and (154.08,255.51) .. (154.08,253.95) -- cycle ;

\draw  [fill={rgb, 255:red, 0; green, 0; blue, 0 }  ,fill opacity=1 ] (285.59,260.18) .. controls (285.59,258.62) and (286.86,257.35) .. (288.42,257.35) .. controls (289.99,257.35) and (291.26,258.62) .. (291.26,260.18) .. controls (291.26,261.75) and (289.99,263.02) .. (288.42,263.02) .. controls (286.86,263.02) and (285.59,261.75) .. (285.59,260.18) -- cycle ;

\draw    (55.46,205.2) -- (78.7,148.89) ;

\draw    (93.43,243.74) -- (78.7,148.89) ;

\draw    (120.64,227.87) -- (139.91,172.7) ;

\draw    (156.92,253.95) -- (139.91,172.7) ;

\draw    (139.91,172.7) -- (113.08,108.65) ;

\draw    (237.79,120.93) -- (207.37,161.36) ;

\draw    (271.39,185.36) -- (237.79,120.93) ;

\draw    (188.66,213.13) -- (207.37,161.36) ;

\draw    (223.24,250.55) -- (207.37,161.36) ;

\draw    (271.39,185.36) -- (248.18,232.97) ;

\draw    (271.39,185.36) -- (288.42,260.18) ;

\draw  [dash pattern={on 4.5pt off 4.5pt}]  (28.25,82.57) -- (171.47,82.57) ;

\draw  [dash pattern={on 4.5pt off 4.5pt}]  (28.25,120.55) -- (177.89,120.82) -- (237.79,120.93) ;

\draw  [dash pattern={on 4.5pt off 4.5pt}]  (28.81,185.55) -- (271.39,185.36) ;

\draw  [dash pattern={on 4.5pt off 4.5pt}]  (27.93,260.18) -- (288.42,260.18) ;

\draw    (45.16,265.62) -- (55.46,205.2) ;

\draw    (67.36,264.53) -- (55.46,205.2) ;

\draw    (85.34,266.05) -- (93.43,243.74) ;

\draw    (99.1,264.53) -- (93.43,243.74) ;

\draw    (113.84,264.53) -- (120.64,227.87) ;

\draw    (128.58,264.53) -- (120.07,227.87) ;

\draw    (152.45,267.33) -- (156.92,253.95) ;

\draw    (162.28,264.77) -- (156.92,253.95) ;

\draw    (179.02,264.53) -- (188.66,213.13) ;

\draw    (196.6,264.53) -- (188.66,213.13) ;

\draw    (217.85,264.34) -- (223.24,250.55) ;

\draw    (228.96,265.62) -- (223.24,250.55) ;

\draw    (242.51,264.53) -- (248.18,232.97) ;

\draw    (255.55,264.53) -- (248.18,235.81) ;

\draw    (284.77,267.24) -- (288.42,260.18) ;

\draw    (292.1,267.97) -- (288.42,260.18) ;

\draw  [dash pattern={on 0.84pt off 2.51pt}]  (45.16,265.62) -- (43.02,277.86) ;

\draw  [dash pattern={on 0.84pt off 2.51pt}]  (67.21,265.65) -- (69.52,277.86) ;

\draw  [dash pattern={on 0.84pt off 2.51pt}]  (85.34,266.05) -- (81.49,277.86) ;

\draw  [dash pattern={on 0.84pt off 2.51pt}]  (99.27,265.65) -- (101.58,277.86) ;

\draw  [dash pattern={on 0.84pt off 2.51pt}]  (113.37,265.65) -- (110.99,277.86) ;

\draw  [dash pattern={on 0.84pt off 2.51pt}]  (128.76,265.65) -- (131.08,277.86) ;

\draw  [dash pattern={on 0.84pt off 2.51pt}]  (152.45,267.33) -- (148.17,277.86) ;

\draw  [dash pattern={on 0.84pt off 2.51pt}]  (162.28,264.77) -- (166.55,277.86) ;

\draw  [dash pattern={on 0.84pt off 2.51pt}]  (179.02,264.53) -- (177.24,277.86) ;

\draw  [dash pattern={on 0.84pt off 2.51pt}]  (196.6,264.53) -- (197.76,277.86) ;

\draw  [dash pattern={on 0.84pt off 2.51pt}]  (218.15,264.53) -- (214.85,277.86) ;

\draw  [dash pattern={on 0.84pt off 2.51pt}]  (242.51,264.53) -- (240.5,277.86) ;

\draw  [dash pattern={on 0.84pt off 2.51pt}]  (255.55,264.53) -- (258.03,277.86) ;

\draw  [dash pattern={on 0.84pt off 2.51pt}]  (285.32,266.48) -- (280.19,277.86) ;

\draw  [dash pattern={on 0.84pt off 2.51pt}]  (292.1,267.97) -- (295.52,277.86) ;

\draw [line width=1.5]    (333.19,285.11) -- (333.19,32.08) ;
\draw [shift={(333.19,29.08)}, rotate = 90] [color={rgb, 255:red, 0; green, 0; blue, 0 }  ][line width=1.5]    (14.21,-4.28) .. controls (9.04,-1.82) and (4.3,-0.39) .. (0,0) .. controls (4.3,0.39) and (9.04,1.82) .. (14.21,4.28)   ;

\draw [line width=1.5]    (328.09,53.84) -- (338.29,53.84) ;

\draw [line width=1.5]    (328.09,82.18) -- (338.29,82.18) ;

\draw [line width=1.5]    (328.09,108.26) -- (338.29,108.26) ;

\draw [line width=1.5]    (328.09,148.5) -- (338.29,148.5) ;

\draw [line width=1.5]    (328.09,277.47) -- (338.29,277.47) ;

\draw  [fill={rgb, 255:red, 0; green, 0; blue, 0 }  ,fill opacity=1 ] (473.57,54.5) .. controls (473.57,52.94) and (474.84,51.67) .. (476.41,51.67) .. controls (477.97,51.67) and (479.24,52.94) .. (479.24,54.5) .. controls (479.24,56.07) and (477.97,57.34) .. (476.41,57.34) .. controls (474.84,57.34) and (473.57,56.07) .. (473.57,54.5) -- cycle ;

\draw  [fill={rgb, 255:red, 0; green, 0; blue, 0 }  ,fill opacity=1 ] (473.57,82.18) .. controls (473.57,80.62) and (474.84,79.35) .. (476.41,79.35) .. controls (477.97,79.35) and (479.24,80.62) .. (479.24,82.18) .. controls (479.24,83.75) and (477.97,85.02) .. (476.41,85.02) .. controls (474.84,85.02) and (473.57,83.75) .. (473.57,82.18) -- cycle ;

\draw    (476.41,54.5) -- (476.41,82.18) ;

\draw  [fill={rgb, 255:red, 0; green, 0; blue, 0 }  ,fill opacity=1 ] (401.17,108.65) .. controls (401.17,107.08) and (402.44,105.81) .. (404,105.81) .. controls (405.57,105.81) and (406.84,107.08) .. (406.84,108.65) .. controls (406.84,110.21) and (405.57,111.48) .. (404,111.48) .. controls (402.44,111.48) and (401.17,110.21) .. (401.17,108.65) -- cycle ;

\draw  [fill={rgb, 255:red, 0; green, 0; blue, 0 }  ,fill opacity=1 ] (539.32,108.92) .. controls (539.32,107.35) and (540.59,106.08) .. (542.16,106.08) .. controls (543.72,106.08) and (544.99,107.35) .. (544.99,108.92) .. controls (544.99,110.48) and (543.72,111.75) .. (542.16,111.75) .. controls (540.59,111.75) and (539.32,110.48) .. (539.32,108.92) -- cycle ;

\draw    (404,108.65) -- (476.41,82.18) ;

\draw    (476.41,82.18) -- (542.16,109.03) ;

\draw  [fill={rgb, 255:red, 0; green, 0; blue, 0 }  ,fill opacity=1 ] (571.44,148.72) .. controls (571.44,147.16) and (572.71,145.89) .. (574.28,145.89) .. controls (575.84,145.89) and (577.11,147.16) .. (577.11,148.72) .. controls (577.11,150.29) and (575.84,151.56) .. (574.28,151.56) .. controls (572.71,151.56) and (571.44,150.29) .. (571.44,148.72) -- cycle ;

\draw  [fill={rgb, 255:red, 0; green, 0; blue, 0 }  ,fill opacity=1 ] (361.41,148.39) .. controls (361.41,146.82) and (362.68,145.55) .. (364.24,145.55) .. controls (365.81,145.55) and (367.08,146.82) .. (367.08,148.39) .. controls (367.08,149.95) and (365.81,151.22) .. (364.24,151.22) .. controls (362.68,151.22) and (361.41,149.95) .. (361.41,148.39) -- cycle ;

\draw  [fill={rgb, 255:red, 0; green, 0; blue, 0 }  ,fill opacity=1 ] (490.77,148.2) .. controls (490.77,146.63) and (492.04,145.37) .. (493.6,145.37) .. controls (495.17,145.37) and (496.43,146.63) .. (496.43,148.2) .. controls (496.43,149.76) and (495.17,151.03) .. (493.6,151.03) .. controls (492.04,151.03) and (490.77,149.76) .. (490.77,148.2) -- cycle ;

\draw  [fill={rgb, 255:red, 0; green, 0; blue, 0 }  ,fill opacity=1 ] (422.32,194.99) .. controls (422.32,193.43) and (423.59,192.16) .. (425.15,192.16) .. controls (426.72,192.16) and (427.99,193.43) .. (427.99,194.99) .. controls (427.99,196.56) and (426.72,197.83) .. (425.15,197.83) .. controls (423.59,197.83) and (422.32,196.56) .. (422.32,194.99) -- cycle ;

\draw  [fill={rgb, 255:red, 0; green, 0; blue, 0 }  ,fill opacity=1 ] (552.85,195.39) .. controls (552.85,193.83) and (554.12,192.56) .. (555.68,192.56) .. controls (557.25,192.56) and (558.52,193.83) .. (558.52,195.39) .. controls (558.52,196.96) and (557.25,198.23) .. (555.68,198.23) .. controls (554.12,198.23) and (552.85,196.96) .. (552.85,195.39) -- cycle ;

\draw  [fill={rgb, 255:red, 0; green, 0; blue, 0 }  ,fill opacity=1 ] (384.43,195.05) .. controls (384.43,193.49) and (385.69,192.22) .. (387.26,192.22) .. controls (388.82,192.22) and (390.09,193.49) .. (390.09,195.05) .. controls (390.09,196.62) and (388.82,197.88) .. (387.26,197.88) .. controls (385.69,197.88) and (384.43,196.62) .. (384.43,195.05) -- cycle ;

\draw  [fill={rgb, 255:red, 0; green, 0; blue, 0 }  ,fill opacity=1 ] (519.36,213.39) .. controls (519.36,211.83) and (520.63,210.56) .. (522.19,210.56) .. controls (523.76,210.56) and (525.03,211.83) .. (525.03,213.39) .. controls (525.03,214.96) and (523.76,216.23) .. (522.19,216.23) .. controls (520.63,216.23) and (519.36,214.96) .. (519.36,213.39) -- cycle ;

\draw  [fill={rgb, 255:red, 0; green, 0; blue, 0 }  ,fill opacity=1 ] (456.89,233.04) .. controls (456.89,231.47) and (458.16,230.2) .. (459.72,230.2) .. controls (461.29,230.2) and (462.55,231.47) .. (462.55,233.04) .. controls (462.55,234.6) and (461.29,235.87) .. (459.72,235.87) .. controls (458.16,235.87) and (456.89,234.6) .. (456.89,233.04) -- cycle ;

\draw  [fill={rgb, 255:red, 0; green, 0; blue, 0 }  ,fill opacity=1 ] (588.82,244.4) .. controls (588.82,242.84) and (590.09,241.57) .. (591.65,241.57) .. controls (593.22,241.57) and (594.49,242.84) .. (594.49,244.4) .. controls (594.49,245.97) and (593.22,247.24) .. (591.65,247.24) .. controls (590.09,247.24) and (588.82,245.97) .. (588.82,244.4) -- cycle ;

\draw    (364.24,148.39) -- (404,108.65) ;

\draw    (387.26,195.05) -- (404,108.65) ;

\draw    (424.59,194.99) -- (404,108.65) ;

\draw    (459.72,233.04) -- (404,108.65) ;

\draw    (542.5,108.65) -- (493.6,148.2) ;

\draw    (522.19,213.39) -- (542.16,109.03) ;

\draw    (574.28,148.72) -- (542.16,109.03) ;

\draw    (574.28,148.72) -- (555.68,195.39) ;

\draw    (574.28,148.72) -- (591.65,244.4) ;

\draw  [dash pattern={on 4.5pt off 4.5pt}]  (333.19,82.18) -- (476.41,82.18) ;

\draw  [dash pattern={on 4.5pt off 4.5pt}]  (333.26,244.25) -- (591.65,244.4) ;

\draw    (349.93,261.78) -- (364.24,148.39) ;

\draw    (370.87,250.66) -- (364.24,148.39) ;

\draw    (381.56,257.08) -- (387.26,195.05) ;

\draw    (397.8,261.35) -- (387.26,195.05) ;

\draw    (417.03,259.64) -- (425.15,194.99) ;

\draw    (433.7,258.78) -- (424.59,194.99) ;

\draw    (454.22,257.93) -- (459.72,233.04) ;

\draw    (465.76,257.5) -- (459.72,233.04) ;

\draw    (483.29,256.65) -- (493.6,148.2) ;

\draw    (504.23,254.51) -- (493.6,148.2) ;

\draw    (516.2,253.66) -- (522.19,213.39) ;

\draw    (531.59,252.8) -- (522.19,213.39) ;

\draw    (546.12,254.08) -- (555.68,195.39) ;

\draw    (561.94,252.8) -- (555.68,198.23) ;

\draw    (588,251.46) -- (591.65,244.4) ;

\draw    (595.33,252.2) -- (591.65,244.4) ;

\draw  [dash pattern={on 0.84pt off 2.51pt}]  (350.1,265.23) -- (347.96,277.47) ;

\draw  [dash pattern={on 0.84pt off 2.51pt}]  (370.87,250.66) -- (372.15,277.59) ;

\draw  [dash pattern={on 0.84pt off 2.51pt}]  (381.56,257.08) -- (380.02,277.47) ;

\draw  [dash pattern={on 0.84pt off 2.51pt}]  (398.22,264.83) -- (400.54,277.04) ;

\draw  [dash pattern={on 0.84pt off 2.51pt}]  (417.03,259.64) -- (413.79,277.47) ;

\draw  [dash pattern={on 0.84pt off 2.51pt}]  (433.7,258.78) -- (436.01,277.47) ;

\draw  [dash pattern={on 0.84pt off 2.51pt}]  (454.22,257.93) -- (450.8,275.88) ;

\draw  [dash pattern={on 0.84pt off 2.51pt}]  (465.76,257.5) -- (468.75,275.88) ;

\draw  [dash pattern={on 0.84pt off 2.51pt}]  (483.29,256.65) -- (479.61,276.62) ;

\draw  [dash pattern={on 0.84pt off 2.51pt}]  (504.23,254.51) -- (506.37,273.75) ;

\draw  [dash pattern={on 0.84pt off 2.51pt}]  (516.2,253.66) -- (512.95,273.62) ;

\draw  [dash pattern={on 0.84pt off 2.51pt}]  (546.12,254.08) -- (542.88,275.33) ;

\draw  [dash pattern={on 0.84pt off 2.51pt}]  (561.94,252.8) -- (565.96,276.19) ;

\draw  [dash pattern={on 0.84pt off 2.51pt}]  (588,251.46) -- (580.86,275.76) ;

\draw  [dash pattern={on 0.84pt off 2.51pt}]  (595.33,252.2) -- (601.69,273.32) ;

\draw  [dash pattern={on 4.5pt off 4.5pt}]  (332.39,108.65) -- (542.5,108.65) ;

\draw  [dash pattern={on 4.5pt off 4.5pt}]  (334.11,148.5) -- (574.28,148.72) ;

\draw [line width=1.5]    (328.09,194.67) -- (338.29,194.67) ;

\draw  [dash pattern={on 0.84pt off 2.51pt}]  (531.59,252.8) -- (535.44,274.6) ;

\draw (35.88,14.38) node  [font=\normalsize] [align=left] {\begin{minipage}[lt]{43.96pt}\setlength\topsep{0pt}
Level $\displaystyle \varepsilon $
\end{minipage}};

\draw (5.14,78) node [anchor=north west][inner sep=0.75pt]    {$\varepsilon _{1}$};

\draw (5.14,102) node [anchor=north west][inner sep=0.75pt]    {$\varepsilon _{2}$};

\draw (5.14,145) node [anchor=north west][inner sep=0.75pt]    {$\varepsilon _{4}$};

\draw (8.92,268.75) node [anchor=north west][inner sep=0.75pt]    {$0$};

\draw (284.72,36.88) node [anchor=north west][inner sep=0.75pt]  [font=\footnotesize]  {${\displaystyle \varepsilon _{0} =\frac{1}{2}}$};

\draw (5.14,117) node [anchor=north west][inner sep=0.75pt]    {$\varepsilon _{3}$};

\draw (310.08,77) node [anchor=north west][inner sep=0.75pt]    {$\varepsilon _{1}$};

\draw (310.08,102) node [anchor=north west][inner sep=0.75pt]    {$\varepsilon _{2}$};

\draw (310.08,142) node [anchor=north west][inner sep=0.75pt]    {$\varepsilon _{3}$};

\draw (313.86,268.35) node [anchor=north west][inner sep=0.75pt]    {$0$};

\draw (8.81,38.63) node [anchor=north west][inner sep=0.75pt]  [font=\footnotesize]  {${\displaystyle \frac{1}{2}}$};

\draw (310.08,191) node [anchor=north west][inner sep=0.75pt]    {$\varepsilon _{4}$};

\draw (173.14,63.9) node [anchor=north west][inner sep=0.75pt]  [font=\scriptsize]  {$e( 0,0)$};

\draw (116.64,83.2) node [anchor=north west][inner sep=0.75pt]  [font=\scriptsize]  {$e( 1,0)$};

\draw (199.92,88.69) node [anchor=north west][inner sep=0.75pt]  [font=\scriptsize]  {$e( 1,1)$};

\draw (151,90) node [anchor=north west][inner sep=0.75pt]  [font=\footnotesize] [align=left] {{\footnotesize $\displaystyle v( 1,1)$}};

\draw (155.62,37.47) node [anchor=north west][inner sep=0.75pt]  [font=\scriptsize]  {$v( 0,0)$};

\draw (72,99.33) node [anchor=north west][inner sep=0.75pt]  [font=\footnotesize] [align=left] {{\footnotesize $\displaystyle v( 2,2)$}};

\draw (241.6,112.09) node [anchor=north west][inner sep=0.75pt]  [font=\footnotesize] [align=left] {{\footnotesize $\displaystyle v( 2,3)$}};

\draw (39,140.3) node [anchor=north west][inner sep=0.75pt]  [font=\footnotesize] [align=left] {{\footnotesize $\displaystyle v( 3,4)$}};

\draw (167,151.76) node [anchor=north west][inner sep=0.75pt]  [font=\footnotesize] [align=left] {{\footnotesize $\displaystyle v( 3,5)$}};

\draw (100,165.09) node [anchor=north west][inner sep=0.75pt]  [font=\footnotesize] [align=left] {{\footnotesize $\displaystyle v( 3,6)$}};

\draw (271.57,170.14) node [anchor=north west][inner sep=0.75pt]  [font=\footnotesize] [align=left] {{\footnotesize $\displaystyle v( 3,7)$}};

\draw (327,9.4) node [anchor=north west][inner sep=0.75pt]    {$\varepsilon $};

\end{tikzpicture}

        \caption{Two examples of skeleton trees constructed by Algorithm~\ref{alg:construct_tree}. The components of the distribution $\mu$ for Proposition~\ref{prop:example_covering_nb} are associated to leaves (or rather paths) of this infinite tree so that the distance $\xi$ between leaves coincides with the natural tree metric (up to a constant factor $2$). On the left we represent the simpler case when $N_{k+1}-N_k=1$ for $k\geq 1$, that is, covering numbers $N(\eps)$ grow by one at a time as $\eps\to 0$. In this case, the constructed tree is exactly a binary tree, constructed according to the exact ordering given by $Order(l)$ for $l\geq 1$. On the right, we represent a general case when covering numbers can grow via jumps $N_{k+1}-N_k\geq 1$. In the specific example, we have $(N_i,i\leq 7)=(1,2,7,10,13,14,15)$. Although the tree is not formally a complete binary tree, the ordering choice balances all subtrees evenly. We represent with dashed lines, all levels $\eps$ which complete a layer of the constructed binary tree.}
        \label{fig:skeleton_tree}
    \end{figure}
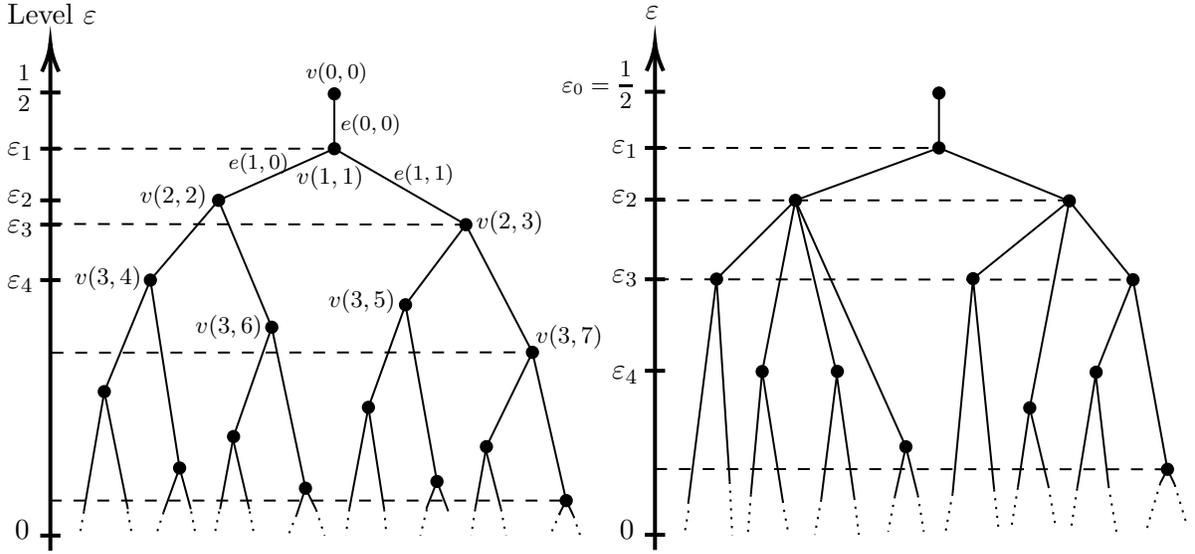

    Note that at every level $\eps\in (0,\frac{1}{2}]\setminus\{\eps_k,k\geq 1\}$, the tree has $N(\eps)$ edges. Because of the hypothesis $\int_0^{1/2}N(\eps)\mathd\eps=\infty$, we have $N(\eps)\to \infty$ as $n\to\infty$ or equivalently $N_k\to\infty$ as $k\to\infty$
    In particular, the tree output by Algorithm~\ref{alg:construct_tree} has an infinite number of edges. We denote by $\Lcal$ the set of leaves of the tree, which corresponds to sequences of nodes $(v_i=v(k_i,n_i))_{i\geq 1}$ that start from the root $v(0,1)$, follow edges of the tree, and go down in the tree, that is the sequence $(k_i)_{i\geq 1}$ is increasing. We note that because of the breadth-first search procedure to construct the tree, all leaves $l=(v_i)_{i\geq 1}$ contain an infinite number of edges---that is, no path ended after a finite number of edges. The tree naturally induces a distance $\rho$ on leaves $\Lcal$ such that two leaves $l_u = (v(k^{(u)}_i,n^{(u)}_i))_{i\geq 1}$ for $u\in\{1,2\}$ have distance
    \begin{equation*}
        d(l_1,l_2) := \eps_{k_i},\quad i = \max\{j:n^{(1)}_j = n^{(2)}_j\}.
    \end{equation*}
    Our goal is to use the tree to construct a binary stochastic process $(X_l)_{l\in\Lcal}$ on $\Lcal$, for which the induced metric $\xi(l_1,l_2) = \Pbb(X_{l_1}\neq X_{l_2})$ coincides with $d$. We start by constructing a distribution on the inner nodes of the tree recursively. First, let $(U_n)_{n\geq 1}\overset{\iid}{\sim}\Ucal([0,1])$ be a sequence of iid uniform random variables on $[0,1]$ and $(Z_n)_{n\geq 1}\overset{i.i.d}{\sim}\Bernu(\frac{1}{2})$ be an independent iid sequence of Bernoulli random variables. At the root $r=v(0,1)$, we let $Y_r=\1[Z_1\geq \frac{1}{2}]$. Next, for any node $v=v(k,n)$ that has parent $v'=v(k',n')$ (they are linked by an edge and $k'<k$), we pose 
    \begin{equation*}
        Y_v := \1[U_n \geq \eta_v] Y_{v'} + \1[U_n<\eta_v] Z_n,\quad \eta_v:= \frac{\sqrt{1-2\eps_k} - \sqrt{1-2\eps_{k'}}}{\sqrt{1-2\eps_k}}.
    \end{equation*}
    We now define the binary stochastic process on $\Lcal$ as follows. For any leaf $l=(v_i)_{i\geq 1}$,
    \begin{equation*}
        X_l:= \begin{cases}
            \lim_{i\to\infty} Y_{v_i} & \liminf_{i\to\infty} Y_{v_i} = \limsup_{i\to\infty} Y_{v_i},\\
            0 &\text{otherwise}.
        \end{cases}
    \end{equation*}
    We can now check that on $\Lcal$ introduced by the distribution of $(X_l)_{l\in\Lcal}$ coincides with $d$. For a leaf $l=(v_i=v(k_i,n_i))_{i\geq 1}$, we first show that $\Pbb(X_l\neq Y_{v_i}) = \frac{1}{2}(1-\sqrt{1-2\eps_{k_i}})$. Indeed, recalling that the sequence $(Z_n)_{n\geq 1}$ is iid $\Bernu(\frac{1}{2})$, for any $j>i$, we can write
    \begin{equation}\label{eq:writing_leaf_nodes}
        Y_{v_j} = \1[\forall i+1\leq p\leq j: U_{n_p}\geq \eta_{v_p}] Y_{v_i}  + \1[\exists i+1\leq p\leq j: U_{n_p}<\eta_{v_p}] A_j,
    \end{equation}
    where $A_j\sim\Bernu(\frac{1}{2})$ is a function of the variables $Z_{n_p}$ for $i+1\leq p\leq j$. 
    Next, observe that since for $i\to\infty$, one has $\eta_{v_i}\sim \eps_{k_{i-1}} - \eps_{k_i}$, then $\sum_{i\geq 1}\eta_{n_i}<\infty$. In particular, the Borel-Cantelli lemma implies that on an almost sure event $\Ecal_l$, for sufficiently large index $i$, one has $U_{n_i}\geq \eta_{v_i}$. Hence, on $\Ecal_l$ the sequence $(Y_{v_i})_{i\geq 1}$ is either finite or converges, that is
    \begin{equation*}
        \Ecal_l \subset \{ X_l = \lim_{i\to\infty} Y_{v_i}\}.
    \end{equation*}
    Hence, using Eq~\eqref{eq:writing_leaf_nodes}, we can write with $\Fcal_l(i):=\{\exists j\geq i+1, U_{n_j}\geq \eta_{v_j}\}$,
    \begin{equation*}
        X_l = \1_{\Ecal_l} \paren{\1_{\Fcal_l(i)^c} Y_{v_i} + \1_{\Fcal_l(i)}B_l(i)},
    \end{equation*}
    where $B_l\sim\Bernu(\frac{1}{2})$ is a function of the variables $Z_{n_j}$ for $j\geq i+1$. Also, note that
    \begin{equation*}
        \Pbb[\Fcal_l(i)] = \Pbb[\exists j\geq i+1, U_{n_j}\geq \eta_{v_j}] = 1- \prod_{j\geq i+1} (1-\eta_{v_p}) = 1-\sqrt{1-2\eps_{k_i}},
    \end{equation*}
    where in the last equality, we used a telescoping argument. To summarize, we showed that $X_l$ coincides with $Y_{v_i}$ except on an (independent) event of probability $1-\sqrt{1-2\eps_{n_i}}$ on which it is an independent Bernoulli $B_l(i)$. We are now ready to compute the $\Pbb(X_{l_1}\neq X_{l_2})$ for two leaves $l_1,l_2\in\Lcal$. Let $v=v(k_i,n_i)$ be the first node for which the paths $l_1=(v_i^{(1)} )_{i\geq 1}$ and $l_2=(v_i^{(2)})_{i\geq 1}$ differ, that is
    \begin{equation*}
        i = \max\{j\geq 1: v_j^{(1)} = v_j^{(2)}\}.
    \end{equation*}
    Then, using the previous characterization of $X_{l_1}$ and $X_{l_2}$, we obtain
    \begin{align*}
        \xi(l_1,l_2)=\Pbb(X_{l_1}\neq X_{l_2}) &= \frac{1}{2}\Pbb\paren{ \Fcal_{l_1}(i)\cup \Fcal_{l_2}(i)} \\
        &= \frac{1}{2}\paren{2(1-\sqrt{1-2\eps_{k_i}}) - (1-\sqrt{1-2\eps_{k_i}})^2}\\
        &=\eps_{k_i} = d(l_1,l_2).
    \end{align*}
    This ends the proof that $\xi$ can directly be computed as the tree distance (up to a factor $2$). As defined currently, the space of leaves $\Lcal$ can potentially be uncountable. Because we need to construct a distribution on $\{0,1\}^\N$, we restrict ourselves to a countable subset of leaves $\Qcal$, one at most for each inner node. Precisely, to any node $v=v(k,n)$ we associate the leaf $l(v)$ which arrives at $v(k,n)$ and from there always selects the left-most edge (first added in the FIFO pile) at any intersection. We then pose $\Qcal = \{l(v), \text{ nodes }v\}$. We recall that there are countably-many nodes, hence $\Qcal$ is countable. The distribution $\mu$ is defined as the distribution of $(X_l)_{l\in\Qcal}$. We note that because $\Qcal$ is now countable, the event $\Ecal:=\bigcap_{l\in\Qcal} \Ecal_l$ has full probability. Hence with probability one,
    \begin{equation*}
        \forall l = (v_i)_{i\geq 1}\in\Qcal, X_l = \lim_{i\to\infty}Y_{v_i}.
    \end{equation*}

    \paragraph{Computing the covering numbers of $\xi$.} Let $E=\{\eps_k,k\geq 1\}$ and fix $\eps\in(0,\frac{1}{2}]\setminus E$. By construction of the skeleton tree, at level $\eps$ there are exactly $N(\eps)$ edges $f_1,\ldots,f_{N(\eps)})$. For each of these edges say $f_p$ for $p\in[N(\eps)]$ if its end nodes are $u_p,v_p$ with $u_p$ being the parent of $v_p$ (that is, has lower index $n$ and number $k$ as well), we now show that $\{l(v_p),p\in[N(\eps)\}$ is an $\eps$-covering of $(\Qcal,\xi)$. Indeed, for $l = (w_i)_{i\geq 1}\in\Qcal$, one of the nodes on the corresponding leaf path must belong to $\{v_p,p\in[N(\eps)\}$. Hence, for some $i\geq 1$ and $p\in[N(\eps)]$, we have $w_i = v_p:= v(k_p,n_p)$. Then, we directly have
    \begin{equation*}
        \xi(l, l(v_p)) = d(l,l(v_p)) \leq \eps_{k_p} < \eps.
    \end{equation*}
    Hence, we obtained $\Ncal_\xi(\eps) \leq N(\eps)$. 

    We now turn to the lower bound. We will show that to $\eps$-cover the set of leaves $\{l(v_p), p\in[N(\eps)]\}$ one needs $N(\eps)$ elements. Suppose that this is not the case, then we have a leaf $l(v)$ such that $\xi(l(v),l(v_p)),\xi(l(v),l(v_q))\leq \eps$. In particular, $l(v)$ and $l(v_p)$ share the same path until length $\eps$, hence the path of $l(v)$ contains edge $f_p$. By symmetry, this shows it also contains $f_{p'}$ which is impossible because they are at same level (and paths only go ``down''). This ends the proof that $\Ncal_\xi(\eps) \geq N(\eps)$.

    In summary, we have that for any $\eps\in(0,\frac{1}{2}]\setminus E$, $\Ncal_\xi(\eps) = N(\eps)$. Additionally, the same arguments show that for any $k\geq 1$, one has $\Ncal_\xi(\eps_k) = N_{k-1}$: we again look at level $\eps_k$ of the tree. If this cuts edges, we proceed similarly as above. However, it will also be the case that at this level are nodes $v=v(k,n)$. These are then also included to construct a set of $N_{k-1}$ nodes $v_1,\ldots,v_{N_{k-1}}$ at level $\eps_k$ or below. The same proof shows that they $\eps_k$-cover the space $\Qcal$, and that $\{l(v_p),p\in[N_{k-1}]\}$ requires at least $N_{k-1}$ elements to be $\eps_k$-covered. This shows in particular that $\Ncal_\xi(\cdot)$ is right-continuous.

    \paragraph{Estimating $\Delta_n(\mu)$.} In this last step, we show that $\Delta_n(\mu)\to\frac{1}{2}$. First, note that by construction and from the above estimates, for any $l\in\Qcal$, we have $X_l\sim\Bernu(\frac{1}{2})$ so that $\p[l]=\Ebb[X_l]=\frac{1}{2}$.

    We recall that the construction of the skeleton tree emulates binary tree that is constructed layer by layer $r$. Consider the state of the tree at the very beginning of the construction of the $r$th layer for some fixed $r\geq 1$. At this point, there are $2^r$ edges $e(r,s)$ for $s\in\{0,1,\ldots,2^r-1\}$ and we can consider the corresponding subtrees at each of these edges $e(r,s)$, which correspond to the set of nodes and edges descendants from $e(r,s)$ (in the case when an edge $e$ was removed and replaced by two new edges, these new edges are also considered descendants of $e$), which we will denote $\Tcal(r,s)$. The main interest of the fractal order for the construction of the tree is that starting from the $r$th layer and for all next layers, we are adding a single edge to $\Tcal(r,s)$ in the order of $s\in \Order)$ then the process is repeated indefinitely. As a result, the subtrees $\Tcal(r,s)$ for $s\in\{0,\ldots,2^r-1\}$ are always filled equally, up to at most one edge. The property that the trees are filled evenly is crucial for the proof.

    In the rest of the proof, we denote by $\parent(v)$ the parent node of any node $v$. For any fixed $s\in\{0,\ldots,2^r-1\}$, we define
    \begin{equation*}
        A(r,s):=\sum_{v = v(k,n)\in\Tcal(r,s)} \eps_{k(\parent(v))}  - \eps_k,
    \end{equation*}
    where the parent node of $v$ is written as $\parent(v)=v(k(\parent(v)), n')$. Our goal is to show that the above quantity is infinite. Let $k_r$ be the value of the level $k$ at the beginning of the construction of the $r$th layer in Algorithm~\ref{alg:construct_tree}. Without loss of generality, we will assume that $\{\eps_k,k\geq 1\}\cap \{2^{-t},t\geq 1\}=\emptyset$. If that is not the case, we can replace all terms $2^{-t}$ with some terms $c2^{-t}$ for some constant $c\in[\frac{1}{2},1]$ since there will exist such a constant $c$ for which  $\{\eps_k,k\geq 1\}\cap \{c2^{-t},t\geq 1\}=\emptyset$. For any $t\geq t_r:=\ceil{\log_2 1/\eps_{k_r} }$, we let $M(r,s;t)$ be the number of edges within $\Tcal(r,s)$ at level $2^{-t}$. Then, we can check that
    \begin{equation}\label{eq:ineq_A}
        A(r,s) \geq \sum_{t\geq t_r} M(r,s;t) (2^{-t} - 2^{-t-1})= \frac{1}{2}\sum_{t\geq t_r} 2^{-t} M(r,s;t).
    \end{equation}
    Now we recall that the number of edges in the complete tree $\Tcal$ at level $2^{-t}$ is precisely $\Ncal_\xi(2^{-t}) = N(2^{-t})$, where we used the result from the previous steps on covering numbers of $\xi$. As a result, we have that for $t\geq t_r$,
    \begin{equation*}
        \sum_{s'=0}^{2^r-1} M(r,s';t) = N(2^{-t}).
    \end{equation*}
    Now from the above discussion on the evenness of the tree construction, the number of edges at any level $\eps\leq \eps_{k_r}$ for the subtrees $\Tcal(r,s)$ can differ at most by one. Hence, we obtain,
    \begin{equation*}
        2^r (M(r,s;t) + 1) \geq \sum_{s'=0}^{2^r-1} M(r,s';t) = N(2^{-t}).
    \end{equation*}
    Plugging this into Eq~\eqref{eq:ineq_A} yields
    \begin{equation*}
        A(r,s) \geq \frac{1}{2^{r+1}}\sum_{t\geq t_r} 2^{-t} N(2^{-t}) - 2^{-t_r} = \infty.
    \end{equation*}
    The last inequality use the hypothesis $\int_0^{1/2}N(\eps)\mathd\eps = \infty$ and the fact that $N$ is non-increasing, so that this condition is equivalent to $\sum_{t\geq 1} 2^{-t} N(2^{-t})=\infty$. As a result, this shows that $A(r,s)=\infty$. Now let $v_{r,s}$ be the top end node of edge $e(r,s)$, which is intuitively the ``root'' of $\Tcal(r,s)$. We obtained
    \begin{align*}
        \Pbb(\exists v \in \Tcal(r,s), Y_v\neq Y_{v_{r,s}}) &\geq \Pbb ( \exists  v \in \Tcal(r,s) , Y_v\neq Y_{\parent(v)})\\
        &= 1- \prod_{v \in \Tcal(r,s)}(1-  \Pbb(Y_v\neq Y_{\parent(v)}) )\\
        &=  1- \prod_{v \in \Tcal(r,s)} \paren{1-  \frac{\eta_v}{2} } \geq 1- \exp\paren{-\frac{1}{2}\sum_{v \in \Tcal(r,s)} \eta_v }.
    \end{align*}
    Now as the index $n$ of $v=v(k,n)$ grows to infinity, we have $\eta_v\sim \eps_{k(\parent(v))}-\eps_k$ because $\eps_{k(\parent(v))}, \eps_k\to 0$. The fact that $A(r,s)=\infty$ then shows that $\sum_{v \in \Tcal(r,s)} \eta_v=\infty$. Hence, we showed that for any $r\geq 1$ and $s\in\{0,\ldots,2^r-1\}$,
    \begin{equation*}
        \Pbb(\exists v \in \Tcal(r,s), Y_v\neq Y_{v_{r,s}}) = 1.
    \end{equation*}
    We denote by $\Gcal(r,s)$ the above event. Hence $\Gcal = \bigcap_{r\geq 1}\bigcap_{0\leq s\leq 2^r-1} \Gcal(r,s)$ has probability one. The main property of $\Gcal$ is that on this almost sure event, for any node $v$ of the tree, there exists a descendant node $v'$ that disagrees in the sense $Y_v\neq Y_{v'}$.

    We are now ready to show that $\Delta_n(\mu)$ does not decay to 0 as $n\to\infty$. Fix $n\geq 1$ and $\delta>0$. We will indicate that we consider the 
    $i$th iid sample of a certain random variable (or event) $V$ with an exponent as in $V^{(i)}$. We construct a sequence of nodes $(\hat v_i)_{i\geq 0}$ recursively. We let $\hat v_0 := v_\delta$ where $v_\delta=v(k_\delta,n_\delta)$ is an arbitrary node for which $\eps_{k_\delta}$ is sufficiently small such that
    \begin{equation}\label{eq:needed_bound_eps}
        \frac{n}{2}\paren{1-\sqrt{1-2\eps_{k_\delta}}} <\delta.
    \end{equation}
    Next, for $i=1$, we define
    \begin{equation*}
        \hat v_i = \begin{cases}
            v_\delta &\text{if } \Gcal^{(i)} \text{ is not satisfied},\\
            \hat v &\text{otherwise, and } \hat v = \argmin\{n: v=v(k,n) \text{ is a descendant of }\hat v_{i-1} \text{ s.t. } Y_v = 1\}.
        \end{cases}
    \end{equation*}
    On the almost sure event $\bigcap_{i\geq 1}\Gcal^{(i)}$, this constructs a sequence of nodes descendants from each other and such that we have 
    \begin{equation*}
        Y_{\hat v_i}^{(i)} = 1,\quad i\geq 1.
    \end{equation*}
    Our candidates for variables whose empirical mean deviates highly from the mean $\frac{1}{2}$ will be the leaves $l(\hat v_i)$ for $i\geq 1$. Importantly, having constructed $\hat v_{i-1}$, the construction of $\hat v_i = v(\hat k_i,\hat n_i)$ can be done completely independently from all the variables $U_n^{(i)},Z_n^{(i)}$ where $n>\hat n_i$. This is because we can simply generate the variables $Y_v^{(i)}$ for nodes $v$ with index $n\in \{\hat n_{i-1}, \hat n_{i-1} +1,\ldots\}$ and stop whenever the conditions $Y_v^{(i)}=1$ and $v$ is a descendant of $\hat v_{i-1}$ are met.

    We now reason conditionally on $\bigcap_{i\geq 1}\Gcal^{(i)}$ and $(\hat v_j)_{j\leq i}$. Note that up to this conditioning, the variables $(\hat v_j)_{j>i}$ only depend on the iid samples of the distribution with index $j>i$. In particular, for any $j\geq i$, this shows that all variables use to define $X_{l(\hat v_j)}$ starting from $Y_{\hat v_i}$ are still all distributed according to their distribution without conditioning. Precisely, write $\hat v_i = v(\hat k_i,\hat n_i)$. Conditioned on $\bigcap_{i\geq 1}\Gcal^{(i)}$ and $(\hat v_j)_{j\geq 1}$, all variables $U^{(i)}_n$ and $Z^{(i)}_n$ for $n> \hat n_i$ and $i\geq 1$ are still all independent and distributed as $\Ucal([0,1])$ and $\Bernu(\frac{1}{2})$ respectively. In particular, for a fixed $n\geq 1$, conditionally on $\bigcap_{i\geq 1}\Gcal^{(i)}$ and $(\hat v_j)_{j\geq 1}$, we have
    \begin{equation*}
        \paren{\1\sqb{X^{(i)}_{l(\hat v_n)} \neq 1 } }_{i\in[n]} = \paren{\1\sqb{X^{(i)}_{l(\hat v_n)} \neq Y_{\hat v_i}^{(i)} } }_{i\in[n]}   \sim \bigotimes_{i\in[n]} \Bernu\paren{\frac{1}{2}\paren{1-\sqrt{1-2\eps_{\hat k_i}}}}.
    \end{equation*}
    As a result,
    \begin{align*}
        \Pbb\sqb{\exists i\in[n], X_{l(\hat v_n)}^{(i)}\neq 1 \mid \bigcap_{i\geq 1}\Gcal^{(i)}, (\hat v_j)_{j\geq 1}} & \leq \sum_{i\in[n]} \frac{1}{2}\paren{1-\sqrt{1-2\eps_{\hat k_i}}}\\
        &\leq \frac{n}{2}\paren{1-\sqrt{1-2\eps_{k_\delta}}} <\delta.
    \end{align*}
    In the last inequality, we used Eq~\eqref{eq:needed_bound_eps}. Hence,
    \begin{align*}
        \Delta_n(\mu) \geq \Ebb\sqb{  \abs{ \pn[l(\hat v_n)] - \frac{1}{2}} \mid \bigcap_{i\geq 1}\Gcal^{(i)}  } 
        \geq \frac{1}{2}  \Pbb\sqb{\forall i\in[n], X_{l(\hat v_n)}^{(i)}= 1 \mid \bigcap_{i\geq 1}\Gcal^{(i)}} \geq \frac{1-\delta}{2}.
    \end{align*}
    This holds for any $\delta>0$. Thus, we showed that $\Delta_n(\mu) \geq \frac{1}{2}$. Also, we clearly have $\|\pn - \frac{1}{2}\|_\infty \leq \frac{1}{2}$ since the empirical means lie in $[0,1]$. This shows that
    \begin{equation*}
        \Delta_n(\mu) = \frac{1}{2},\quad n\geq 1,
    \end{equation*}
    and ends the proof of the result.
\end{proof}

The proof of the previous result introduces a tree structure for the components of $\mu$. In order to have as much deviations as possible (so that $\Delta_n(\mu)=\frac{1}{2}$ for $n\geq 1$, this tree was constructed using a ``wide'' full binary tree (see Figure~\ref{fig:skeleton_tree}). As a remark, we can compare this to the tree generated by the distribution from Proposition~\ref{prop:ex_fast_converging}. In that result, the goal is instead to construct distributions with large covering numbers, but that still have $\Delta_n(\mu)\ninf 0$. As a result, the corresponding tree is as ``thin'' as possible, as represented in Figure~\ref{fig:thin_tree}.

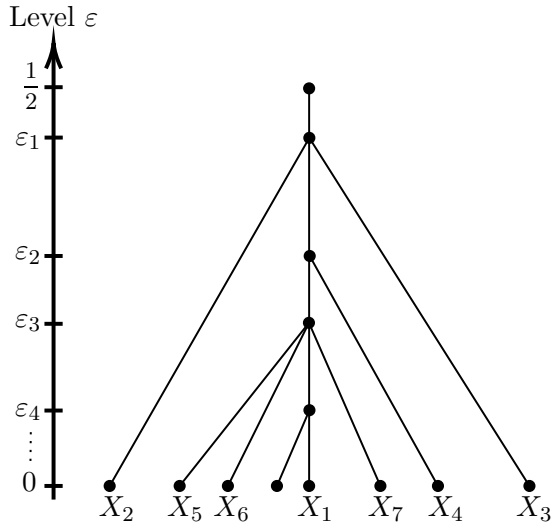
\begin{figure}[ht]
    \centering

\tikzset{every picture/.style={line width=0.75pt}}    

\begin{tikzpicture}[x=0.75pt,y=0.75pt,yscale=-0.9,xscale=0.9]

\draw [line width=1.5]    (28.25,285.5) -- (28.25,32.47) ;
\draw [shift={(28.25,29.47)}, rotate = 90] [color={rgb, 255:red, 0; green, 0; blue, 0 }  ][line width=1.5]    (14.21,-4.28) .. controls (9.04,-1.82) and (4.3,-0.39) .. (0,0) .. controls (4.3,0.39) and (9.04,1.82) .. (14.21,4.28)   ;

\draw [line width=1.5]    (23.15,54.23) -- (33.35,54.23) ;

\draw [line width=1.5]    (23.15,235.57) -- (33.35,235.57) ;

\draw [line width=1.5]    (23.15,148.89) -- (33.35,148.89) ;

\draw [line width=1.5]    (23.15,277.86) -- (33.35,277.86) ;

\draw  [fill={rgb, 255:red, 0; green, 0; blue, 0 }  ,fill opacity=1 ] (168.63,54.89) .. controls (168.63,53.33) and (169.9,52.06) .. (171.47,52.06) .. controls (173.03,52.06) and (174.3,53.33) .. (174.3,54.89) .. controls (174.3,56.46) and (173.03,57.73) .. (171.47,57.73) .. controls (169.9,57.73) and (168.63,56.46) .. (168.63,54.89) -- cycle ;

\draw  [fill={rgb, 255:red, 0; green, 0; blue, 0 }  ,fill opacity=1 ] (168.63,277.86) .. controls (168.63,276.3) and (169.9,275.03) .. (171.47,275.03) .. controls (173.03,275.03) and (174.3,276.3) .. (174.3,277.86) .. controls (174.3,279.43) and (173.03,280.7) .. (171.47,280.7) .. controls (169.9,280.7) and (168.63,279.43) .. (168.63,277.86) -- cycle ;

\draw    (171.47,54.89) -- (171.47,277.86) ;

\draw  [fill={rgb, 255:red, 0; green, 0; blue, 0 }  ,fill opacity=1 ] (168.75,82.65) .. controls (168.75,81.08) and (170.02,79.81) .. (171.58,79.81) .. controls (173.15,79.81) and (174.42,81.08) .. (174.42,82.65) .. controls (174.42,84.21) and (173.15,85.48) .. (171.58,85.48) .. controls (170.02,85.48) and (168.75,84.21) .. (168.75,82.65) -- cycle ;

\draw  [fill={rgb, 255:red, 0; green, 0; blue, 0 }  ,fill opacity=1 ] (168.95,148.89) .. controls (168.95,147.33) and (170.22,146.06) .. (171.79,146.06) .. controls (173.35,146.06) and (174.62,147.33) .. (174.62,148.89) .. controls (174.62,150.46) and (173.35,151.73) .. (171.79,151.73) .. controls (170.22,151.73) and (168.95,150.46) .. (168.95,148.89) -- cycle ;

\draw    (59.7,277.86) -- (171.58,82.65) ;

\draw    (171.58,82.65) -- (294.92,277.86) ;

\draw  [fill={rgb, 255:red, 0; green, 0; blue, 0 }  ,fill opacity=1 ] (56.86,277.86) .. controls (56.86,276.3) and (58.13,275.03) .. (59.7,275.03) .. controls (61.26,275.03) and (62.53,276.3) .. (62.53,277.86) .. controls (62.53,279.43) and (61.26,280.7) .. (59.7,280.7) .. controls (58.13,280.7) and (56.86,279.43) .. (56.86,277.86) -- cycle ;

\draw [line width=1.5]    (23.15,186.93) -- (33.35,186.93) ;

\draw  [fill={rgb, 255:red, 0; green, 0; blue, 0 }  ,fill opacity=1 ] (96.08,277.86) .. controls (96.08,276.3) and (97.35,275.03) .. (98.91,275.03) .. controls (100.48,275.03) and (101.75,276.3) .. (101.75,277.86) .. controls (101.75,279.43) and (100.48,280.7) .. (98.91,280.7) .. controls (97.35,280.7) and (96.08,279.43) .. (96.08,277.86) -- cycle ;

\draw  [fill={rgb, 255:red, 0; green, 0; blue, 0 }  ,fill opacity=1 ] (168.53,186.36) .. controls (168.53,184.8) and (169.8,183.53) .. (171.37,183.53) .. controls (172.93,183.53) and (174.2,184.8) .. (174.2,186.36) .. controls (174.2,187.93) and (172.93,189.2) .. (171.37,189.2) .. controls (169.8,189.2) and (168.53,187.93) .. (168.53,186.36) -- cycle ;

\draw  [fill={rgb, 255:red, 0; green, 0; blue, 0 }  ,fill opacity=1 ] (240.83,277.86) .. controls (240.83,276.3) and (242.1,275.03) .. (243.67,275.03) .. controls (245.23,275.03) and (246.5,276.3) .. (246.5,277.86) .. controls (246.5,279.43) and (245.23,280.7) .. (243.67,280.7) .. controls (242.1,280.7) and (240.83,279.43) .. (240.83,277.86) -- cycle ;

\draw  [fill={rgb, 255:red, 0; green, 0; blue, 0 }  ,fill opacity=1 ] (123.12,277.86) .. controls (123.12,276.3) and (124.39,275.03) .. (125.96,275.03) .. controls (127.52,275.03) and (128.79,276.3) .. (128.79,277.86) .. controls (128.79,279.43) and (127.52,280.7) .. (125.96,280.7) .. controls (124.39,280.7) and (123.12,279.43) .. (123.12,277.86) -- cycle ;

\draw  [fill={rgb, 255:red, 0; green, 0; blue, 0 }  ,fill opacity=1 ] (168.81,235.37) .. controls (168.81,233.81) and (170.08,232.54) .. (171.64,232.54) .. controls (173.21,232.54) and (174.48,233.81) .. (174.48,235.37) .. controls (174.48,236.94) and (173.21,238.21) .. (171.64,238.21) .. controls (170.08,238.21) and (168.81,236.94) .. (168.81,235.37) -- cycle ;

\draw  [fill={rgb, 255:red, 0; green, 0; blue, 0 }  ,fill opacity=1 ] (150.6,277.86) .. controls (150.6,276.3) and (151.87,275.03) .. (153.43,275.03) .. controls (155,275.03) and (156.27,276.3) .. (156.27,277.86) .. controls (156.27,279.43) and (155,280.7) .. (153.43,280.7) .. controls (151.87,280.7) and (150.6,279.43) .. (150.6,277.86) -- cycle ;

\draw  [fill={rgb, 255:red, 0; green, 0; blue, 0 }  ,fill opacity=1 ] (208.58,277.86) .. controls (208.58,276.3) and (209.85,275.03) .. (211.42,275.03) .. controls (212.98,275.03) and (214.25,276.3) .. (214.25,277.86) .. controls (214.25,279.43) and (212.98,280.7) .. (211.42,280.7) .. controls (209.85,280.7) and (208.58,279.43) .. (208.58,277.86) -- cycle ;

\draw  [fill={rgb, 255:red, 0; green, 0; blue, 0 }  ,fill opacity=1 ] (292.09,277.86) .. controls (292.09,276.3) and (293.36,275.03) .. (294.92,275.03) .. controls (296.49,275.03) and (297.76,276.3) .. (297.76,277.86) .. controls (297.76,279.43) and (296.49,280.7) .. (294.92,280.7) .. controls (293.36,280.7) and (292.09,279.43) .. (292.09,277.86) -- cycle ;

\draw    (98.91,277.86) -- (171.37,186.36) ;

\draw    (243.67,277.86) -- (171.79,148.89) ;

\draw    (125.96,277.86) -- (171.37,186.36) ;

\draw    (211.42,277.86) -- (171.37,186.36) ;

\draw    (153.43,277.86) -- (171.64,235.37) ;

\draw  [dash pattern={on 0.84pt off 2.51pt}]  (15.5,247.7) -- (15.3,264.85) ;

\draw [line width=1.5]    (23.21,82.49) -- (33.41,82.49) ;

\draw (35.88,14.38) node  [font=\normalsize] [align=left] {\begin{minipage}[lt]{43.96pt}\setlength\topsep{0pt}
Level $\displaystyle \varepsilon $
\end{minipage}};

\draw (5.14,230) node [anchor=north west][inner sep=0.75pt]    {$\varepsilon _{4}$};

\draw (5.14,142) node [anchor=north west][inner sep=0.75pt]    {$\varepsilon _{2}$};

\draw (8.92,268.75) node [anchor=north west][inner sep=0.75pt]    {$0$};

\draw (5.14,180) node [anchor=north west][inner sep=0.75pt]    {$\varepsilon _{3}$};

\draw (8.81,38.63) node [anchor=north west][inner sep=0.75pt]  [font=\footnotesize]  {${\displaystyle \frac{1}{2}}$};

\draw (163.07,282) node [anchor=north west][inner sep=0.75pt]    {$X_{1}$};

\draw (51.87,282) node [anchor=north west][inner sep=0.75pt]    {$X_{2}$};

\draw (285.87,282) node [anchor=north west][inner sep=0.75pt]    {$X_{3}$};

\draw (236.27,282) node [anchor=north west][inner sep=0.75pt]    {$X_{4}$};

\draw (90.27,282) node [anchor=north west][inner sep=0.75pt]    {$X_{5}$};

\draw (116.27,282) node [anchor=north west][inner sep=0.75pt]    {$X_{6}$};

\draw (202.67,282) node [anchor=north west][inner sep=0.75pt]    {$X_{7}$};

\draw (5.2,78) node [anchor=north west][inner sep=0.75pt]    {$\varepsilon _{1}$};

\end{tikzpicture}

    \caption{Tree corresponding to the distribution $\mu$ constructed in Proposition~\ref{prop:ex_fast_converging}. As in Figure~\ref{fig:skeleton_tree}, the leaves of the tree represent the components of the distribution, and the distance $\xi$ coincides exactly with the natural tree metric (up to a constant factor 2). In this example, the covering numbers are $(N_k,k\leq 5)=(1,3,4,7,8)$.}
    \label{fig:thin_tree}
\end{figure}

\section{Upper bound on \mathinhead{$\Delta_n(\mu)$} via subgaussian differences}
\label{sec:subgaussian}

For $i,j\in\N$, let us define $\r[ij]=\E[X_iX_j]$.
We claim the following bound on the moment generating function
of the difference of correlated Bernoullis.
\begin{lemma}
\label{lem:pqr-mgf}
\beq
\E\exp
\set{
t[
(X_i-\p[i])
-
(X_j-\p[j])
}
&\le&
\exp\paren{
\frac{t^2}{\log {\frac{2}{\p[i]+\p[j]-2\r[ij]}}}
},
\qquad t\ge 0.
\eeq
\end{lemma}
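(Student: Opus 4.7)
The plan is to compute the MGF of $D := (X_i - \p[i]) - (X_j - \p[j])$ in closed form and then establish the claimed sub-Gaussian bound by proving nonnegativity of a suitable auxiliary function via its second derivative. Setting $Y := X_i - X_j$, $s := \p[i] - \p[j]$, $\alpha := \P(X_i{=}1, X_j{=}0) = \p[i] - \r[ij]$, and $\beta := \P(X_i{=}0, X_j{=}1) = \p[j] - \r[ij]$, the variable $Y$ takes values in $\{-1,0,1\}$ with respective probabilities $\beta$, $1-\xi$, $\alpha$, where $\xi = \alpha + \beta$ and $s = \alpha - \beta$. A direct computation gives $\E[e^{tD}] = e^{-ts}[(1-\xi) + \alpha e^t + \beta e^{-t}]$, so, writing $L := \log(2/\xi)$, the lemma is equivalent to nonnegativity on $[0,\infty)$ of
\[
    h(t) := e^{ts + t^2/L} - (1-\xi) - \alpha e^t - \beta e^{-t}.
\]
One verifies that $h(0) = 0$ and $h'(0) = s - (\alpha - \beta) = 0$, so it suffices to show $h''(t) \ge 0$ for $t \ge 0$.

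Computing $h''(t) = [(s + 2t/L)^2 + 2/L]\, e^{ts + t^2/L} - \alpha e^t - \beta e^{-t}$ and dropping the nonnegative term $(s + 2t/L)^2$, the target reduces to
\[
    \alpha\, e^{t(1-s) - t^2/L} + \beta\, e^{-t(1+s) - t^2/L} \le 2/L, \qquad t \ge 0.
\]
I would bound the left-hand side by maximizing each exponent over $t \ge 0$ separately: since $|s| \le \xi \le 1$, the function $t \mapsto t(1-s) - t^2/L$ attains its maximum $L(1-s)^2/4$ at $t = L(1-s)/2$, while $t \mapsto -t(1+s) - t^2/L$ is nonpositive on $[0, \infty)$. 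Thus the left-hand side is at most $\alpha\, e^{L(1-s)^2/4} + \beta \le \xi\, e^{L(1-s)^2/4} \le \xi\, e^{L(1+\xi)^2/4}$, using $e^{L(1-s)^2/4} \ge 1$ and $s \ge -\xi$.

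It then remains to verify the scalar inequality $L\xi\, e^{L(1+\xi)^2/4} \le 2$, equivalently---via the substitution $\xi = 2e^{-L}$---that $L\bigl(\tfrac{3}{4} - e^{-L} - e^{-2L}\bigr) \ge \log L$ for every $L \ge \log 2$. I expect this last step to be the main technical obstacle, but it follows from elementary calculus: the minimum of $3L/4 - \log L$ on $[\log 2, \infty)$ is $1 - \log(4/3) \approx 0.712$ (attained at $L = 4/3$), comfortably exceeding the supremum of $L(e^{-L} + e^{-2L})$, which is bounded by $1/e + 1/(2e) = 3/(2e) \approx 0.552$.
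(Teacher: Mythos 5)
Your proof is correct, and it takes a genuinely different and substantially shorter route than the paper's. You work directly with the MGF rather than its logarithm: writing the target as $h(t)=e^{ts+t^2/L}-(1-\xi)-\alpha e^t-\beta e^{-t}$ with $\alpha=\p[i]-\r[ij]$, $\beta=\p[j]-\r[ij]$, $s=\alpha-\beta$, $\xi=\alpha+\beta$, $L=\log(2/\xi)$, you exploit $h(0)=h'(0)=0$ and prove $h''\ge 0$ by discarding the nonnegative $(s+2t/L)^2$ term and maximizing the two remaining exponentials separately over $t\ge 0$; all parameter dependence then collapses (via $|s|\le\xi$ and $\xi=2e^{-L}$) into the single scalar inequality $L\bigl(\tfrac34-e^{-L}-e^{-2L}\bigr)\ge\log L$ on $[\log 2,\infty)$, which your comparison of $\min(3L/4-\log L)=1-\log(4/3)\approx 0.712$ against $\sup L(e^{-L}+e^{-2L})\le 3/(2e)\approx 0.552$ settles correctly (I checked each step: the MGF formula, the vertex value $L(1-s)^2/4$, the bounds $(1-s)^2\le(1+\xi)^2$ and $\beta\le\beta e^{L(1-s)^2/4}$, and the final calculus). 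The paper instead reparametrizes $(a,b)=(\xi,\p[j]-\p[i])$, compares the log-MGF to $x^2/\log(2/a)$ by a first-derivative argument after the substitution $x=\log(w+1)$, and grinds through an extensive case analysis ($0\le w<3$ versus $w\ge 3$, tangent-line bounds in $a$ and $b$, locating critical points of auxiliary functions), which yields the same constant but at far greater length. Your argument is looser pointwise (you give away the square term and decouple the two exponentials) yet still lands comfortably within the constant in the lemma, so it buys a much cleaner and more checkable proof; the only cosmetic gap is the degenerate case $\xi=0$, where $L=\infty$ and the claim holds trivially since then $X_i=X_j$ almost surely and $\p[i]=\p[j]$ --- worth one sentence, and the same convention is implicit in the paper's proof.
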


\begin{proof}
Consider the following functions.

\beq
f (x)
& \eqdef &
\log \left(\left(\p[1]-\p[12]\right) \mathe^{\left(-\p[1]+\p[2]+1\right) x}+\left(\p[2]-\p[12]\right) \mathe^{\left(-\p[1]+\p[2]-1\right) x}+\left(-\p[1]-\p[2]+2 \p[12]+1\right) \mathe^{\left(\p[2]-\p[1]\right) x}\right),
\\
g (x)
& \eqdef &
\frac{x^2}{\log \left(\frac{2}{\p[1]+\p[2]-2 \p[12]}\right)}.
\eeq
We would like to show \( f(x) \leq g(x)\) for all \(\p[1], \p[2] \in [0,1]\), \(\p[12] \in [0\vee \p[1] + \p[2] - 1, \p[1] \wedge \p[2]]\) and \(x \in\R\). Note that, by symmetry, it is enough to consider \(x \geq 0\). \\
Re-parametrizing \(\p[1] = \frac{1}{2} (a-b+2 \p[12])\) and  \(\p[2] = \frac{1}{2} (a+b+2 \p[12])\), we have
\beq
f(x)
& \eqdef &
\log \left(\frac{1}{2} \mathe^{(b-1) x} \left(a \left(\mathe^x-1\right)^2-b \mathe^{2 x}+b+2 \mathe^x\right)\right) ,
\\
g(x)
& \eqdef &
\frac{x^2}{\log \left(\frac{2}{a}\right)}.
\eeq
Note that $g(0)-f(0) = 0$. If we show that $\frac{\partial}{\partial x} (g-f)=0$ for $1 \geq a\geq |b|,x \geq 0$, we are done. First, we multiply the first derivative by the non-negative 
\(\log \left(\frac{2}{a}\right) \left(a w^2-b (w+1)^2+b+2 (w+1)\right)\), 
and then change variables \(x \eqdef \log(w+1)\).
\beq
H &\eqdef&
\paren{\frac{\partial}{\partial x} (g-f)} \log \left(\frac{2}{a}\right) \left(a w^2-b (w+1)^2+b+2 (w+1)\right) \\
&=&
2 \log (w+1) \left(w^2 (a-b)-2 (b-1) w+2\right)+w \log \left(\frac{2}{a}\right) (b (b (w+2)+w)-a (b w+w+2)),
\eeq
whose non-negativity we must verify
for \(w \geq 0 \). \\
We now consider two cases, first \(0 \leq w<3\) and then \(w \geq 3\).

\paragraph{Case 1: \(0 \leq w < 3\).}
It can be shown that 
the coefficient of \(\log(1+w)\) is positive, because it is a quadratic polynomial in \(w\) with positive coefficients, so \(\log(1+w)\) can be replaced with something smaller (in that range), such as \(\frac{w}{\mathe}\). After doing that, we have
\[
H \geq
2 \mathe \left(b^2-a\right) \log \left(\frac{2}{a}\right)+2 w^2 (a-b)+w \left(-\mathe (b+1) (a-b) \log \left(\frac{2}{a}\right)-4 b+4\right)+4 =: I,
\]
which is non-negative for \(w=0\), so we can show that the derivative (multiplied by a non-negative),
\[ \paren{\frac{\partial}{\partial w} I} \frac{\mathe}{w}= 4 w (a-b)+\mathe (b+1) (b-a) \log \left(\frac{2}{a}\right)-4 b+4,
\] is non-negative.
The above expression is increasing in \(w\), so the worst case is \(w=0\), in which case we have
\begin{align*} 
4 w (a-b)+\mathe (b+1) (b-a) \log \left(\frac{2}{a}\right)-4 b+4 = -\mathe (b+1) (a-b) \log \left(\frac{2}{a}\right)-4 b+4.
\end{align*}

The right-hand side is a quadratic polynomial in \(b\) with the coefficient of $b^2$ being positive, therefore it is convex in \(b\), 
so every tangent lies below the expression. We use the tangent at the point where $b$ has the value $a$, and we get the following linear expression in $b$.
\begin{align*}
-\mathe a^2 \log \left(\frac{2}{a}\right)+b \left(\mathe (a+1) \log \left(\frac{2}{a}\right)-4\right)-\mathe a \log \left(\frac{2}{a}\right)+4
\\
 = b \left(\mathe (a+1) \log \left(\frac{2}{a}\right)-4\right)+\mathe (-a) (a+1) \log \left(\frac{2}{a}\right)+4.
\end{align*}

We now consider the two endpoints of the above line, \(b=a\) and \(b=-a\), and show that both are positive.
For \(b=a\) the expression is just \(4(1-a)\) and therefore non-negative. At the second endpoint, \(b=-a\), the expression has value of
\[
b \left(\mathe (a+1) \log \left(\frac{2}{a}\right)-4\right)+\mathe (-a) (a+1) \log \left(\frac{2}{a}\right)+4
=
2 \paren{2-\mathe a \log \left(\frac{2}{a}\right)} (1+a).
\]
The term \((1+a)\) is positive, so we are left with \(\paren{2-\mathe a \log \left(\frac{2}{a}\right)}\), which is convex since the second derivative is \(\frac{\mathe}{a}\) and has a minimum at \(a = \frac{2}{\mathe}\) with the value \(0\).
The \(0 \leq w < 3\) part is done, we now move on to the second part, \( w \geq 3 \). 

\paragraph{Case 2: \( w \geq 3 \).} Recall that we have to show \(H \geq 0\) for \(a \geq \abs{b}, x \geq 0\). \(H\) is convex in \(b\), because
\[
\frac{\partial^2}{\partial b^2} H
=
2 w (w+2) \log \left(\frac{2}{a}\right)
\geq
0,
\]
so we can lower bound \(H\) by any of its tangent lines. The tangent line of \(H\) where \(b = a\) is
\begin{align*}
T(w,a,b) \eqdef & 
w \log \left(\frac{2}{a}\right) \left(-\left(a^2 (w+2)\right) + a (b (w+4)-w-2)+b w\right)
\\
& + 2 \log (w+1) \left(w^2 (a-b)-2 (b-1) w+2 \right).
\end{align*}
Since we have a linear expression in \(b\), we can check the endpoints of \(b\),  \(a\) and \(-a\), and be done. Starting with \(b = a\), we have
\[
T(w,a,a) = 2 (a-1) a w \log \left(\frac{2}{a}\right)+4 (-a w+w+1) \log (w+1).
\]
Observe that \(a \log(2/a)\) is concave and has a maximum of \(\frac 2 \mathe\). Therefore, we can replace \(a \log(2/a)\) with  \(\frac 2 \mathe\), divide everything by \( 4 \) and get
\[
R(a) \eqdef a \left(\frac{w}{\mathe}-w \log (w+1)\right)-\frac{w}{\mathe}+w \log (w+1)+\log (w+1).
\]
{We now analyze two cases:}
\(
\frac{w}{\mathe}-w \log (w+1)\leq 0
\)
{and}
\(
\frac{w}{\mathe}-w \log (w+1)>0
\).
If
\(
\frac{w}{\mathe}-w \log (w+1)\leq 0
\)
\text{then the} \text{worst}
\(
a
\)
\text{is}
\(
a=1
\)
\text{, for which, we have,}
\(R(1) = \log (w+1)\),
which is non-negative.
\text{If}
\(
\frac{w}{\mathe}-w \log (w+1)>0
\)
\text{then the} \text{worst}
\(
a
\)
\text{is}
\(
a=0
\)
\text{, for that, we have}
\[
R(0) = (w+1) \log (w+1)-\frac{w}{\mathe}
\]
Which is non-negative since it is \(0\) at \(w=0\) and the first derivative,
\(
\frac{\partial}{\partial w} R = (w+1) \log (w+1)-\frac{w}{\mathe},
\)
is positive.
We now have to turn to the other endpoint of
\(b\)
\text{, where it is }
\(-a\).
In that case, we get
\beqn
\label{eq:R_b=-a}
T(w,a,-a) = 4 (w+1) (a w+1) \log (w+1)-2 a w (a (w+3)+w+1) \log \left(\frac{2}{a}\right)
.
\eeqn
We now consider two cases, \(a \leq 1/10\) and \(a>1/10\). For \(a>1/10\), we do as in the other endpoint of \(b\) and bound  \(a \log (\frac 2 a)\) by \(2/\mathe\), divide by $4$, and get
\[
T(w,a,-a)/4 \leq a \left(w (w+1) \log (w+1)-\frac{w (w+3)}{\mathe}\right)+\frac{(w+1) (\mathe \log (w+1)-w)}{\mathe}.
\]
Split to two cases, \(w (w+1) \log (w+1)-\frac{w (w+3)}{\mathe}\) is non-negative or negative. If it is negative, then the worst where \(a=1\), in which case the above expression becomes
\begin{align*}
 \left(w (w+1) \log (w+1)-\frac{w (w+3)}{\mathe}\right)+\frac{(w+1) (\mathe \log (w+1)-w)}{\mathe}
\\
=
(w+1)^2 \log (w+1)-\frac{2 w (w+2)}{\mathe}.
\end{align*}
The above is non-negative because \(w \geq 3\) it is positive for \(w=3\) and its derivative, \\ \( \frac{(w+1) (2 \mathe \log (w+1)+\mathe-4)}{\mathe} \), is also positive for \(w \geq 3\). \\
If
\( w (w+1) \log (w+1)-\frac{w (w+3)}{\mathe} \)
\text{ is positive, then the worse is }
\(a=\frac{1}{10}\). Plugging it into eq. (\ref{eq:R_b=-a}) results in
\[
T(w,1/10,-1/10)
 =
\mathe \left(w^2+11 w+10\right) \log (w+1)-w (11 w+13),
\]
which is again positive since it is positive for \( w=3\) and the first derivative is positive.
Finally, we are left with the \(a \leq 1/10\) case.
Lastly, we need to prove
\[
T(w,a,-a) = 4 (w+1) (a w+1) \log (w+1)-2 a w (a (w+3)+w+1) \log \left(\frac{2}{a}\right) \geq 0
\]
for \(0 \leq a \leq 1/10\) and \(w \geq 3\). 
We do this first by showing that
(1)
\(T(3,a,-a) > 0\) 
and
(2)
\( \left. \frac{\partial}{\partial w} T(w,a,-a) \right|_{w=3} > 0\)
for the appropriate range.
Then, proving 
(3)
\(\frac{\partial^2}{\partial w^2} T(w,a,-a) \geq 0\) for the appropriate range
completes the proof.
First, taking care of \(T(3,a,-a)\), we have
\begin{align*}
    T(3,a,-a) 
    & =
    3 \left(-5 a^2+(-7 a-5) a-3 a\right) \log \left(\frac{2}{a}\right)+2 (-6 (-a-1)+18 a+2) \log (4)
    \\
    & = 
    4 \left(4 (a \log (64)+\log (4))-3 a (3 a+2) \log \left(\frac{2}{a}\right)\right),
\end{align*}
which has the following positive second derivative, 
\begin{align*}
    \frac{\partial^2}{\partial a^2} T(3,a,-a) 
    & =
    4 \left(-3 a \left(\frac{3 a+2}{a^2}-\frac{6}{a}\right)-6 \left(3 \log \left(\frac{2}{a}\right)-\frac{3 a+2}{a}\right)\right)
    \\
    & = 
    12 \left(\frac{2}{a}-6 \log \left(\frac{2}{a}\right)+9\right).
\end{align*}
This second derivative is positive because it is decreasing by the fact \(\frac{\partial^3}{\partial a^3} T(3, a, -a) = \frac{24 (3 a-1)}{a^2} < 0\) and the minimum of \(\frac{\partial^2}{\partial a^2} T(3, a, -a) \) at \(a \in [0,1/10] \),
\[
\left. \frac{\partial^2}{\partial a^2} T(3,a,-a) \right|_{a=1/10} = 12 (29-6 \log (20)) \approx 132.307,
\]
is positive.
Knowing that \(\frac{\partial^2}{\partial a^2} T(3, a, -a) \) is convex, we lower bound it by its tangent line at \(a=0.095\), \[a \left(\frac{1371}{50}+16 \log (64)-\frac{771}{25} \log \left(\frac{400}{19}\right)\right)+16 \log (4)+\frac{57 \left(57 \log \left(\frac{400}{19}\right)-457\right)}{10000} \approx 20.566 - 0.008 a.\] 
This tangent line is decreasing and is positive at \(a \in [0,1/10] \). Therefore, \(\frac{\partial^2}{\partial a^2} T(3,a,-a)\) is positive at \(a \in [0,1/10] \).

Next, we analyze
\[
\left. \frac{\partial}{\partial w} T(w,a,-a) \right|_{w=3}
=
2 \left(2 a (3+7 \log (4))-a (9 a+7) \log \left(\frac{2}{a}\right)+2+\log (16)\right),
\]
which has a positive second derivative, \(\frac{2 (18 a-7)}{a^2}\), and thus we can use a tangent line at point \(a=0.025\) as follows. We have,
\begin{align*}
    \left. \frac{\partial}{\partial w} T(w,a,-a) \right|_{w=3}
    & \geq
    \frac{1}{800} (2911+6436 \log (2)+9 \log (5))-\frac{1}{20} a (-529+72 \log (2)+298 \log (5))
    \\
    & \approx
    9.23323 - 0.0259547 a,
\end{align*}
which is positive for \(a \in [0,1/10] \).

Third and last, we show that
\(\frac{\partial^2}{\partial w^2} T(w,a,-a) \geq 0\).
We have that
\begin{align*}
\frac{\partial^2}{\partial w^2} T(w,a,-a)
& =
\frac{12 a w-4 (a+1) a (w+1) \log \left(\frac{2}{a}\right)+8 a (w+1) \log (w+1)+8 a+4}{w+1}
\\
& \defeq
\frac{U(w)}{w+1},
\end{align*}
thus it is sufficient to prove \(U(w) > 0\). This is done by finding the critical point, proving that it is the minimum point, and then showing that \(U\) is positive at that point.
To locate the minimum point, we solve 
\[
U'(w) = 8 a \log (w+1)+20 a-4 (a+1) a \log \left(\frac{2}{a}\right) = 0
\]
for \(w\)
and get
\[
w_0 = \mathe^{\frac{1}{2} \left(a \log \left(\frac{2}{a}\right)+\log \left(\frac{2}{a}\right)-5\right)}-1.
\]
This is indeed the minimum point because
\[
U''(w) = \frac{8 a}{w+1} > 0
\]
for our ranges of \(w, a\) (unless \(a=0\) but then \(U(w) > 0\) immediately).
To show that \(U(w_0) > 0\), we first compute
\begin{align*}
    U(w_0)
    = &
    8 a+12 a \left(\mathe^{\frac{1}{2} \left(a \log \left(\frac{2}{a}\right)+\log \left(\frac{2}{a}\right)-5\right)}-1\right)-4 (a+1) a \mathe^{\frac{1}{2} \left(a \log \left(\frac{2}{a}\right)+\log \left(\frac{2}{a}\right)-5\right)} \log \left(\frac{2}{a}\right)
    \\& +8 a \mathe^{\frac{1}{2} \left(a \log \left(\frac{2}{a}\right)+\log \left(\frac{2}{a}\right)-5\right)} \log \left(\mathe^{\frac{1}{2} \left(a \log \left(\frac{2}{a}\right)+\log \left(\frac{2}{a}\right)-5\right)}\right)+4
    \\
    = &
    \frac{
    3\ 2^{\frac{a+5}{2}} \left(\frac{1}{a}\right)^{\frac{a-1}{2}}-4 \mathe^{5/2} a-2^{\frac{a+5}{2}} (a+1) \left(\frac{1}{a}\right)^{\frac{a-1}{2}} \log \left(\frac{2}{a}\right)
    }{\mathe^{5/2}}
    \\
    & + \frac{\frac{1}{2} 2^{\frac{a+7}{2}} \left(\frac{1}{a}\right)^{\frac{a-1}{2}} \left((a+1) \log \left(\frac{2}{a}\right)-5\right)+4 \mathe^{5/2}}{\mathe^{5/2}}
    \\
    & =
    -\frac{2^{\frac{a+7}{2}} \left(\frac{1}{a}\right)^{\frac{a-1}{2}}}{\mathe^{5/2}}-4 a+4,
\end{align*}
and additionally
\begin{align*}
\frac{\partial}{\partial a} U(w_0)
& = 
-\frac{2^{\frac{a+7}{2}-1} \left(\frac{1}{a}\right)^{\frac{a-1}{2}} \log (2)}{\mathe^{5/2}}-\frac{2^{\frac{a+7}{2}} \left(\frac{1}{a}\right)^{\frac{a-1}{
2}} \left(\frac{1}{2} \log \left(\frac{1}{a}\right)-\frac{a-1}{2 a}\right)}{\mathe^{5/2}}-4
\\
& =
-\frac{4 \left(2^{\frac{a+1}{2}} \left(\frac{1}{a}\right)^{\frac{a+1}{2}} (a (\log (2)-1)+1)+2^{\frac{a+1}{2}} \left(\frac{1}{a}\right)^{\frac{a-1}{2}} \log \left(\frac{1}{a}\right)+\mathe^{5/2}\right)}{\mathe^{5/2}},
\end{align*}
and note that 
the latter expression
is negative for \(a \in [0,1/10]\).
Thus, \(U (w_0) \) is decreasing in \(a\) and at the minimum point has a value of
\[
\left. U(w_0) \right|_{a=1/10}
=
\frac{18}{5}-\frac{8 \sqrt[10]{2}}{5^{9/20} \mathe^{5/2}}
\approx
3.25887
> 0.
\]
The proof is complete.
\end{proof}

\begin{lemma}
\label{lem:rho-metric}
The function $\rho:\N^2\to\R_+$
defined in 
\eqref{eq:rho-metric}
satisfies the metric axioms.
\end{lemma}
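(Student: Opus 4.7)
The plan is to dispose of non-negativity, symmetry, and $\rho(i,i)=0$ immediately from the definition together with the observation that $\xi$ is itself a (pseudo-)metric on $\N$: the inclusion $\{X_i\neq X_k\}\subseteq\{X_i\neq X_j\}\cup\{X_j\neq X_k\}$ gives $\xi(i,k)\le \xi(i,j)+\xi(j,k)$, and symmetry and $\xi(i,i)=0$ are trivial. The only substantive axiom is therefore the triangle inequality for $\rho$.

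For this, I would write $a=\rho(i,j)$, $b=\rho(j,k)$ and split on whether $a+b\ge 2/\sqrt 3$. If it is, the cap in the definition of $\rho$ gives $\rho(i,k)\le 2/\sqrt 3\le a+b$ for free. If $a+b<2/\sqrt 3$, then the cap is inactive for both $a$ and $b$, so $\xi(i,j)=2f(a)$ and $\xi(j,k)=2f(b)$ with $f(t):=\exp(-2/t^2)$ (setting $f(0):=0$; the corner cases $a=0$ or $b=0$ reduce to observing that $\xi(i,j)=0$ forces $\xi(i,k)=\xi(j,k)$ via the $\xi$-triangle inequality in both directions). Since $\rho$ is strictly increasing in $\xi$ below the cap, the target $\rho(i,k)\le a+b$ is equivalent to $\xi(i,k)\le 2f(a+b)$. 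Combining with the $\xi$-triangle inequality $\xi(i,k)\le 2f(a)+2f(b)$ reduces the entire problem to the purely analytic claim
\[
f(a)+f(b)\;\le\;f(a+b)\qquad\text{whenever } a+b\le 2/\sqrt 3.
\]

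The remaining (and essentially only nontrivial) step is this functional inequality, which I plan to establish by a convexity argument. A direct computation yields $f''(t)=e^{-2/t^2}\,(16/t^6-12/t^4)$, which is strictly positive precisely on $(0,2/\sqrt 3)$; the cap in the definition of $\rho$ is thus chosen to be exactly the inflection point of $f$. Consequently $f'$ is increasing on $(0,2/\sqrt 3)$. Fixing $a$ and setting $h(b):=f(a+b)-f(a)-f(b)$, I have $h(0)=0$ and $h'(b)=f'(a+b)-f'(b)\ge 0$ on $0\le b\le 2/\sqrt 3-a$ since $a+b\ge b$ and both lie in the convexity region. Hence $h\ge 0$ throughout the relevant range, which yields the analytic claim and completes the triangle inequality. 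The main obstacle in the whole argument is recognizing that the cap $2/\sqrt 3$ matches the inflection point of $f$ exactly — after this alignment is spotted, the calculus is entirely routine.
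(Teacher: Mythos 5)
Your proof is correct, but it goes by a genuinely different route than the paper. The paper writes $\rho = F\circ\xi$ with $F(x)=\frac{2}{\sqrt3}\wedge\sqrt{2/\log(2/x)}$ and invokes the classical fact that a nonnegative, nondecreasing, concave function $F$ with $F^{-1}(0)=\{0\}$ is metric-preserving, so its only computation is the second derivative of $\sqrt{2/\log(2/x)}$, whose sign change at $x=2e^{-3/2}$ (equivalently $\rho=2/\sqrt3$) shows $F$ is concave up to the cap. You instead prove the triangle inequality directly, pass to the inverse function $g(t)=2e^{-2/t^2}$ linking $\rho$ back to $\xi$, and reduce to the superadditivity $f(a)+f(b)\le f(a+b)$ of $f(t)=e^{-2/t^2}$, which you get from convexity of $f$ on $(0,2/\sqrt3)$. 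The two arguments are really dual: subadditivity of the increasing concave $F$ is exactly superadditivity of its increasing convex inverse, and both pivot on the observation that the cap $2/\sqrt3$ is precisely the inflection point. Your version is more self-contained (no appeal to a metric-preserving-functions citation), at the cost of the explicit case split on whether $a+b$ reaches the cap; the paper's version is shorter once the Kaplansky-type lemma is granted, but its remark that ``the minimum of concave functions is concave'' is a bit loose since the uncapped $\sqrt{2/\log(2/x)}$ is convex beyond the inflection point, so one really needs the piecewise-gluing argument you handle explicitly with your split.
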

\begin{proof}
    Let $f(x) \eqdef \frac{2}{\sqrt{3}} \mn \sqrt{\frac{2}{\log \frac{2}{x}}}$ where $f(0)=0$ such that we have $\rho = f \circ \xi$. It is known that non-negative, non-decreasing, concave functions with $f\inv(0) = \set{0}$ are metric-preserving \cite[for example][p. 70]{kaplansky2001set}.
    It is easy to see that $f$ is satisfies $f\inv(0) = \set{0}$, and is non-decreasing.
    To see that $f$ is concave, observe that
    \begin{align*}
        \frac{\partial^2}{\partial x^2}
        \sqrt{\frac{2}{\log \left(\frac{2}{x}\right)}}
        &=
        \sqrt{2} \left(\frac{3 \left(\frac{1}{\log \left(\frac{2}{x}\right)}\right)^{5/2}}{4 x^2}-\frac{\left(\frac{1}{\log \left(\frac{2}{x}\right)}\right)^{3/2}}{2 x^2}\right)
        \\
        &=
        \frac{\left(3-2 \log \left(\frac{2}{x}\right)\right) \left(\frac{1}{\log \left(\frac{2}{x}\right)}\right)^{5/2}}{2 \sqrt{2} x^2}
    \end{align*}
    is negative for $ 0<x<\frac{2}{e^{3/2}}$, which is where $\sqrt{\frac{2}{\log \left(\frac{2}{x}\right)}} \leq
    \frac{2}{\sqrt{3}}$. Since the minimum of concave functions is concave, we are done.
\end{proof}

Recall the notation $\Ncal_\rho(\eps)$ for the $\eps$-covering number of $(\N,\rho)$. Because $(X_i)_{i\geq 1}$ is a subgaussian process on $(\N,\rho)$, Dudley's theorem directly gives upper bounds on $\Delta_n(\mu)$.

\vspace{2mm}

\begin{proof}{\textbf{of Proposition~\ref{prop:dudley_thm}}.}
    \cref{lem:pqr-mgf} shows that the vector $\pn-p$ is sub-Gaussian with respect to the metric $\frac{\rho}{\sqrt n}$. As a result, Dudley's theorem (\citet[Corollary 5.25]{van2014probability}) shows that
    \begin{equation*}
        \Delta_n(\mu) = \Ebb \sup_{i\in\N} |\pn[i]-\p[i]| \leq 24\int_0^\infty \sqrt{\log \Ncal_\rho(\eps\sqrt n)}\mathd\eps = \frac{24}{\sqrt n}\int_0^\infty \sqrt{\log \Ncal_\rho(\eps)}\mathd\eps.
    \end{equation*}
    In the last equality, we noted that for any $\eps\geq 1$, one has $\Ncal_\xi(\eps)=1$. This ends the proof.
\end{proof}

As a remark, we can check that the proposed condition $\int_0^1 \sqrt{\Ncal_\rho(\eps)}\mathd\eps<\infty$ in Proposition~\ref{prop:dudley_thm} is stronger than the sufficient condition $\int_0^1 \Ncal_\xi(\eps)\mathd\eps<\infty$ from \cref{thm:sufficient_condition_covering_nb}. Indeed, suppose that one has $\int_0^1 \sqrt{\Ncal_\rho(\eps)}\mathd\eps<\infty$, then in particular, $\eps\sqrt{\log\Ncal_\rho(\eps)} \underset{\eps\to 0^+}{\longrightarrow} 0.$ This implies that for any $c>0$, we have
\begin{equation*}
    e^{-\frac{c}{\eps^2}} \Ncal_\rho(\eps)  \underset{\eps\to 0^+}{\longrightarrow} 0.
\end{equation*}
Also, for $\eps\in(0,\frac{2}{\sqrt 3}]$, we have
\begin{equation*}
    \Ncal_\rho(\eps) = \Ncal_\xi\paren{2e^{-\frac{2}{\eps^2}}}.
\end{equation*}
As a result, this shows that for any $c>0$, we have
\begin{equation*}
    \eps^c \Ncal_\xi(\eps) \underset{\eps\to 0}{\longrightarrow} 0.
\end{equation*}
The above bound for any $c<1$ already shows that $\int_0^1\Ncal_\xi(\eps)\mathd\eps<\infty$.

\section{On the exact conditions for the convergence \mathinhead{$\Delta_n(\mu)\ninf 0$}}
\label{sec:examples}

In this section, we provide sufficient conditions for the convergence of the expected maximum deviation $\Delta_n$ and identify some key challenges for the general characterization.

We start by proving \cref{thm:more_general_suff_condition}. This gives a sufficient condition for the decay of $\Delta_n$ to $0$ that is a significantly weaker condition than the condition Eq~\eqref{eq:sufficient_condition} from \cref{thm:sufficient_condition_covering_nb}. To the best of our knowledge, we are not aware of any distribution that would not satisfy it, but would still exhibit the convergent behavior for $\Delta_n$. For the sake of exposition, we recall the condition from \cref{thm:more_general_suff_condition} for distributions $\mu$ on $\{0,1\}^\N$:

\paragraph{Sufficient Condition (SC)} \textit{
The metric space $(\N,\xi)$ is totally bounded and there exists $K\geq 1$ such that for any $\eps>0$, there exist events $(E_k)_{k\in\N}$ and a finite set $J\subset \N$ with
    \begin{itemize}
        \item $\Pbb(E_k)\leq \eps,\;\forall k\in\N$,
        \item $\displaystyle \sup_{k\in\N} \frac{\log(k+1)}{\log\frac{1}{\Pbb(E_k)}}<\infty$,
        \item $\forall i\in\N, \exists j\in J, \exists \Kcal\subset \N,  \text{ such that } |\Kcal|\leq K \text{ and } \{X_i\neq X_j\}\subset \bigcup_{k\in \Kcal} E_k$.
    \end{itemize}
    Then $\Delta_n(\mu) \ninf 0$.
}

\vspace{2mm}

\begin{proof}{\textbf{of \cref{thm:more_general_suff_condition}}}
    Fix $\eps>0$ and consider the events $(E_k)_{k\in\N}$ as provided by the condition. Fix $n\geq 1$. Mirroring the notation for $\pn$, we define $\qn$ (resp. $\un$) as the empirical probability vector for the variable $(\1[E_k])_{k\in\N}$ (resp. $(X_j)_{j\in J}$). That is, if we denote by an exponent $(n)$ different samples from these random variables, we pose
    \begin{equation*}
        \qn[k] = \frac{1}{n}\sum_{i=1}^n \1[E_k^{(i)}],\qquad \un[j] = \frac{1}{n}\sum_{i=1}^n X_j^{(i)}.
    \end{equation*}
    From \citet{CohenK23a,blanchard2023tight}, we know that since
    \begin{equation*}
        T(E):=\sup_{k\in\N} \frac{\log(k+1)}{\log\frac{1}{\Pbb(E_k)}}<\infty,
    \end{equation*}
    letting $\q[k] = \Pbb(E_k)\leq \eps$, one has
    \begin{equation}\label{eq:convergence_q}
        \Ebb \|\qn - \q\|_\infty \underset{n\to\infty}{\longrightarrow}0.
    \end{equation}
    Next, for any $i\in\N$, let $\Kcal_i\subset \N$ and $j_i\in [J]$ be the set of indices indices such that $\{X_i\neq Z_{j_i}\}\subset \bigcup_{k\in \Kcal_i} E_k$. Then, with $\u[j] = \Ebb[Z_j]$ for $j\in[J]$, we have
    \begin{align*}
        |\pn[i] - \p[i]| &\leq |\pn[i] - \un[j_i]| + |\un[j_i]-\u[j_i]| + |\u[j_i]-\p[i]|\\
        &\leq \frac{1}{n} \sum_{l=1}^n \1[X_i^{(l)}\neq X_{j_i}^{(l)}] + |\un[j_i]-\u[j_i]| + \Pbb(X_i\neq X_{j_i})\\
        &\leq  \frac{1}{n} \sum_{l=1}^n \sum_{k\in \Kcal_i}\1[E_k^{(l)}] + \|\un-\u\|_\infty + \sum_{k\in \Kcal_i}\Pbb(E_k)\\
        &\leq \sum_{k\in \Kcal_i}(\qn[k] + \eps) + \|\un-\u\|_\infty.
    \end{align*}
    Next, for any $k\in \Kcal_i$,
    \begin{equation*}
        \qn[k]\leq \q[k] + |\qn[k]-\q[k]| \leq \Pbb(E_k) + \|\qn-\q\|_\infty \leq \eps +\|\qn-\q\|_\infty.
    \end{equation*}
    Putting the two previous inequalities together yields
    \begin{equation*}
        \|\hat p-p\|_\infty \leq (2\eps + \|\qn -\q\|_\infty)K +  \|\un-\u\|_\infty.
    \end{equation*}
    Because $J$ is finite, $\Ebb \|\un-\u\|_\infty \to 0$. Together with Eq~\eqref{eq:convergence_q}, this gives
    \begin{equation*}
        \limsup_{n\to\infty}\Delta_n(\mu)\leq 2K\eps .
    \end{equation*}
    This holds for any $\eps>0$, hence we obtained the desired result $\Delta_n(\mu)\to 0$ as $n\to\infty$.
\end{proof}

An inspection of the proof shows that one does not need the random variables $(X_j)_{j\in J}$ used as ``centers'' to belong to the set of components $\{X_i,i\geq 1\}$. In fact, the proof holds if we put no restriction on these centers. This yields the following result.

\begin{corollary}\label{cor:condition_useless}
    Let $\mu$ be a distribution on $\{0,1\}^\N$ such that $(\N,\xi)$ is totally bounded. Suppose that there exists $K\geq 1$ such that for any $\eps>0$, there exist events $(E_k)_{k\in\N}$, and a finite sequence of random variables $(Z_j)_{j\in [J]}$ (defined on the same probability space as $\mu$) with
    \begin{itemize}
        \item $\Pbb(E_k)\leq \eps,\;\forall k\in\N$,
        \item $\displaystyle \sup_{k\in\N} \frac{\log(k+1)}{\log\frac{1}{\Pbb(E_k)}}<\infty$,
        \item $\forall i\in\N, \exists j\in J, \exists \Kcal\subset \N,  \text{ such that } |\Kcal|\leq K \text{ and } \{X_i\neq Z_j\}\subset \bigcup_{k\in \Kcal} E_k$.
    \end{itemize}
    Then $\Delta_n(\mu) \ninf 0$.
\end{corollary}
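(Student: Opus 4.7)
The plan is to replay the proof of Theorem~\ref{thm:more_general_suff_condition} essentially verbatim, noting that the only property of the original centers $X_j$, $j\in J$, actually used in that argument is that they are binary random variables on the same probability space as $\mu$, whose empirical averages converge and for which the event $\{X_i \neq X_{j_i}\}$ provides an $L^1$ control on $|\pn[i] - \un[j_i]|$. Neither the chaining step nor the uniform convergence of the $E_k$-frequencies relies on the centers being coordinates of $X$, so nothing changes when one substitutes arbitrary $\{0,1\}$-valued $Z_j$'s (which we may assume without loss of generality, since the $Z_j$ enter the hypothesis only through the event $\{X_i \neq Z_j\}$).

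Concretely, I would fix $\eps > 0$, invoke the hypothesis to obtain $(E_k)_{k\in\N}$, $(Z_j)_{j\in[J]}$, $K$, and the covering assignment $i \mapsto (j_i, \Kcal_i)$. Drawing $n$ iid samples $(X^{(l)}, Z^{(l)}, (\1_{E_k}^{(l)})_k)_{l \le n}$ from the joint probability space, I would form the empirical averages $\pn[i]$, $\un[j] := n^{-1}\sum_l Z_j^{(l)}$, and $\qn[k] := n^{-1}\sum_l \1[E_k^{(l)}]$. The second bullet of the hypothesis combined with the independent-case results of \citet{CohenK23,blanchard2023tight} gives $\Ebb\|\qn - \q\|_\infty \to 0$, and the finiteness of $J$ together with the usual law of large numbers gives $\Ebb\|\un - \u\|_\infty \to 0$, where $\u[j] := \Ebb[Z_j]$.

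Then, just as in the proof of Theorem~\ref{thm:more_general_suff_condition}, the triangle inequality
$$
|\pn[i] - \p[i]| \;\leq\; |\pn[i] - \un[j_i]| + |\un[j_i] - \u[j_i]| + |\u[j_i] - \p[i]|
$$
together with the inclusion $\{X_i \neq Z_{j_i}\} \subset \bigcup_{k\in\Kcal_i} E_k$ bounds the first term by $\sum_{k\in\Kcal_i}\qn[k]$ and the third by $\sum_{k\in\Kcal_i}\Pbb(E_k) \leq K\eps$. Using $\qn[k] \leq \eps + \|\qn-\q\|_\infty$ and $|\Kcal_i| \leq K$ uniformly in $i$, one obtains
$$
\|\pn - \p\|_\infty \;\leq\; K\bigl(2\eps + \|\qn - \q\|_\infty\bigr) + \|\un - \u\|_\infty,
$$
so that taking expectations and $\limsup_{n\to\infty}$ yields $\limsup_n \Delta_n(\mu) \leq 2K\eps$; since $\eps$ was arbitrary, $\Delta_n(\mu) \to 0$.

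I do not expect any real obstacle: the sole substantive observation is that the proof of Theorem~\ref{thm:more_general_suff_condition} never exploits the identity of the centers as coordinates of $X$ beyond their being binary and lying in the same probability space. The only minor point worth flagging is the binarity of the $Z_j$, which can be taken as a harmless normalization (or else built into the reading of $\{X_i \neq Z_j\}$), after which the proof runs without modification.
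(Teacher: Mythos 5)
Your proposal is correct and takes essentially the same route as the paper: the paper proves Corollary~\ref{cor:condition_useless} precisely by remarking that the proof of Theorem~\ref{thm:more_general_suff_condition} nowhere uses that the centers are coordinates of $X$, only that they live on the same probability space and that $\{X_i\neq Z_{j_i}\}$ (covered by at most $K$ of the $E_k$) controls the empirical and mean discrepancies, exactly as you rerun it. Your side remark reducing to binary $Z_j$ (e.g.\ replacing $Z_j$ by $\1[Z_j=1]$, which only shrinks the events $\{X_i\neq Z_j\}$) is a valid way to handle the one point the paper leaves implicit.
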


While this condition seems more general than the condition (SC), they turn out to be equivalent.

\begin{proposition}
    The condition (SC) is equivalent to the condition from Corollary~\ref{cor:condition_useless}.
\end{proposition}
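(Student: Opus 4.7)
The forward implication, from (SC) to the condition of Corollary~\ref{cor:condition_useless}, is immediate: taking $Z_j := X_j$ for each $j \in J$ exhibits the required random variables, with the same events $(E_k)_k$ and the same constant $K$. So the substance of the proposition lies entirely in the reverse direction.

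For the reverse direction, suppose the condition of Corollary~\ref{cor:condition_useless} holds and fix $\eps > 0$. This gives us a family of events $(E_k)_{k\in\N}$, auxiliary random variables $(Z_j)_{j\in[J]}$, a constant $K$, and for each $i \in \N$ a choice of index $j_i \in [J]$ and set $\Kcal_i \subset \N$ with $|\Kcal_i| \leq K$ witnessing $\{X_i \neq Z_{j_i}\} \subset \bigcup_{k \in \Kcal_i} E_k$. The plan is to replace each abstract center $Z_j$ by an actual coordinate $X_{i^*(j)}$ of the process. Concretely, let $J^* := \{j_i : i \in \N\} \subset [J]$ be the image of the assignment $i \mapsto j_i$, and for each $j \in J^*$ choose one representative $i^*(j) \in \N$ with $j_{i^*(j)} = j$. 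Define $J' := \{i^*(j) : j \in J^*\} \subset \N$, which is finite since $|J'| \leq |J^*| \leq |[J]|$.

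The key step is the following triangle-type inclusion on events. For an arbitrary $i \in \N$, set $j := j_i$ and $i' := i^*(j) \in J'$. Then
\[
\{X_i \neq X_{i'}\} \subset \{X_i \neq Z_j\} \cup \{Z_j \neq X_{i'}\} \subset \bigcup_{k \in \Kcal_i \cup \Kcal_{i'}} E_k,
\]
and the covering set $\Kcal_i \cup \Kcal_{i'}$ has cardinality at most $2K$. Keeping the very same events $(E_k)_k$ (which trivially continue to satisfy the two probabilistic conditions of (SC)), choosing the center set $J'$, and setting $K' := 2K$, the third bullet of (SC) is verified for every $i \in \N$. Since this construction is carried out for each $\eps > 0$ with the same constant $K' = 2K$, condition (SC) follows.

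I do not anticipate any real obstacle. The only minor bookkeeping point is that some indices $j \in [J]$ may not appear as any $j_i$; these are simply dropped from $J^*$ (and hence not represented in $J'$), which does not affect the argument because every $i$ is matched to some $j_i \in J^*$ by hypothesis. The doubling of $K$ into $2K$ is harmless, since (SC) only requires the existence of some finite $K$.
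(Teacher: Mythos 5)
Your proposal is correct. Both directions are handled soundly: the forward implication by taking $Z_j:=X_j$, and the reverse by restricting to the centers actually used, choosing a representative coordinate $i^*(j)$ in each fiber $\{i: j_i=j\}$, and covering $\{X_i\neq X_{i^*(j_i)}\}\subset\{X_i\neq Z_{j_i}\}\cup\{X_{i^*(j_i)}\neq Z_{j_i}\}$ by $\bigcup_{k\in\Kcal_i\cup\Kcal_{i^*(j_i)}}E_k$ with multiplicity at most $2K$. This shares the paper's skeleton (discard unassigned centers, replace each used abstract center by a nearby coordinate, triangle-type inclusion of events), but the mechanism for covering the second leg is genuinely different. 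The paper applies the hypothesis at the rescaled level $\tilde\eps=\eps/K$, deduces that each surviving center admits a coordinate $i(j)$ with $\Pbb(X_{i(j)}\neq Z_j)\leq\eps$, and then adjoins the discrepancy events $F_j=\{X_{i(j)}\neq Z_j\}$ themselves to the covering sequence, which yields multiplicity $K+1$ but obliges it to re-verify the two probabilistic bullets for the augmented, re-indexed event list. You instead reuse the representative's own witness family $\Kcal_{i^*(j)}$, so the event sequence and the scale $\eps$ are untouched and the first two bullets hold verbatim; the only cost is $K\mapsto 2K$, which is harmless since both conditions only require some finite constant uniform in $\eps$ (and your construction does keep $2K$ uniform). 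Net effect: your route is slightly cleaner and avoids the paper's rescaling and bookkeeping, at the price of a marginally larger covering multiplicity.
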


\begin{proof}
    It suffices to show that if $\mu$ satisfies the condition from Corollary~\ref{cor:condition_useless}, then it also satisfies (SC). Fix such a distribution. We will use all the notations of the condition and we now aim to find adequate parameters to satisfy (SC). We will use $\tilde K := K+1$. Fix $\eps>0$. Because $\mu$ satisfies the condition in Corollary~\ref{cor:condition_useless} for $\tilde \epsilon = \frac{\epsilon}{K}$, there exists events $(E_k)_{k\in\N}$ and random variables $(Z_j)_{j\in[J]}$ satisfying the conditions for $\tilde \epsilon$. We also fix for $i\geq 1$, an element $j_i\in J$ and set $\Kcal_i\subset \N$ such that
    \begin{equation}\label{eq:def_cond}
        \{X_i\neq Z_{j_i}\}\subset \bigcup_{k\in\Kcal_i}E_k.
    \end{equation}

    Fix $j\in [J]$. First suppose that
    \begin{equation*}
        \Pbb(Z_j \neq X_i)> \epsilon,\quad i\geq 1.
    \end{equation*}
    Then, we can check that $j_i\neq j$, because
    \begin{equation*}
        \Pbb(X_i\neq Z_{j_i}) \leq \Pbb\paren{\bigcup_{k\in \Kcal_i}E_k} \leq \sum_{k\in \Kcal_i}\Pbb(E_k) \leq K\tilde\eps  = \eps.
    \end{equation*}
    As a result, the variable $Z_j$ is simply not needed and we can delete it from the set of centers $(Z_j)_{j\in [J]}$. We can therefore suppose without loss of generality that for all $j\in [J]$, there is some $i(j)\geq 1$, for which
    \begin{equation*}
        \Pbb(X_{i(j)}\neq Z_j)\leq \eps.
    \end{equation*}
    We then define the event $F_j:=\{X_{i(j)}\neq Z_j\}$ for all $j\in [J]$ and add all these to the sequence of covering events $(E_k)_{k\geq 1}$ by defining
    \begin{equation*}
        \tilde E_j:=\begin{cases}
            F_j&j \leq J\\
            E_{j-J} & j>J.
        \end{cases}
    \end{equation*}
    The first condition for (SC) is satisfied by construction of the events $F_j$ for $j\in[J]$ because $\Pbb(F_j) = \Pbb(X_{i(j)}\neq Z_j)\leq \eps$. Next, we only added a finite number of events to the sequence, hence the second property is still valid. Last, for $i\geq 1$, because Eq~\eqref{eq:def_cond} holds, we have
    \begin{equation*}
        \{X_i\neq X_{i(j_i)}\} \subset \{X_i\neq Z_{j_i}\}\cup \{X_{i(j_i)}\neq Z_{j_i}\} \subset F_j\cup \bigcup_{k\in \Kcal_i}E_k = \bigcup_{k\in \{j_i\}\cup \{k+J,k\in \Kcal_i\}}\tilde E_k
    \end{equation*}
    This ends the proof that $\mu$ satisfies condition (SC), which gives the desired result.
\end{proof}

The proposed condition (SC) essentially asks that ``bad events'' $\{X_i\neq X_j\}$ can be adequately covered by some sequence of events $(E_k)_{k\in\N}$. As discussed in \cref{sec:main_results}, this significantly generalizes the condition $\int_0^1 \Ncal_\xi(\eps)\mathd\eps<\infty$ along two directions.

\subsection{Generalization (i)} 
We showed in \cref{thm:2procs} and Proposition~\ref{prop:3rd_order_moments} that $2$nd and $3$rd order moment information on the distribution $\mu$ is not enough to have a necessary and sufficient characterization. The condition (SC) instead covers deviations via events in the probability space of $\mu$ directly, which allows for correlations with an arbitrarily large number of coordinates. 

For instance, we can check how the condition (SC) distinguishes between the two distributions $\mu$ and $\nu$ from \cref{thm:2procs}. For $\nu$, because the variables $C_t$ are independent even within each block $t\in\Scal_k$ for some fixed $k\geq 1$, there is no convenient choice of covering events $E_k$. On the other hand, for $\mu$, one can directly choose the bad events $E_k := \{B_k=1\}$:
\begin{equation*}
    \{X_t^\mu \neq Z_0\} \subset \{B_k=1\} = E_k,\quad t\in\Scal_k,k\geq 1.
\end{equation*}
We can therefore cover the deviations of all components $X_t^\mu$ for $t\in\Scal_k$ using a single event $E_k$ with small probability $\Pbb(E_k) = 2^{-k}$. However, for any $t\neq t'\in\Scal_k$, one has
\begin{equation*}
    \xi(t,t') = \Pbb(X_t^\mu\neq X_{t'}^\mu) = \frac{\Pbb(E_k)}{2}.
\end{equation*}
Hence, contrary to (SC), the covering number approach severely suffers from the size of the block $|\Scal_k|$ (so would any approach that looks at a fixed number of components at once).

\subsection{Generalization (ii)}

The condition (SC) allows to cover the bad event $\{X_i\neq Z_j\}$ potentially with several events $E_k$ (at most $K$), which departs from standard coverings for which one aims to directly cover the probability $\Pbb(X_i\neq X_j)$. The alternative condition would be written as follows.

\paragraph{Tentative Condition 1 (TC1)} \textit{
The metric space $(\N,\xi)$ is totally bounded and for any $\eps>0$, there exist events $(E_k)_{k\in\N}$ and a finite set $J\subset \N$ with
    \begin{itemize}
        \item $\Pbb(E_k)\leq \eps,\;\forall k\in\N$,
        \item $\displaystyle \sup_{k\in\N} \frac{\log(k+1)}{\log\frac{1}{\Pbb(E_k)}}<\infty$,
        \item $\forall i\in\N, \exists j\in J, \exists k\in \N,  \{X_i\neq X_j\}\subset E_k$.
    \end{itemize}
}

While this is still sufficient by \cref{thm:more_general_suff_condition}, we can show that it is not necessary.

\begin{proposition}[(TC1) is not necessary]
\label{prop:TC1_non_necessary}
    There exists a probability measure $\mu$ on $\{0,1\}^\N$ that does not satisfy condition (TC1) but $\Delta_n(\mu)\ninf 0$.
\end{proposition}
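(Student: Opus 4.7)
The plan is to take $\mu$ to be the product measure on $\{0,1\}^\N$ with $X_i \sim \Bernu(1/(i+1))$ independently. First, I would check that $(\N,\xi)$ is totally bounded and $\Delta_n(\mu)\ninf 0$. Since $\xi(i,j) = p_i + p_j - 2p_i p_j$ with $p_i \to 0$, an $\eps$-cover is provided by $\{i : p_i > \eps/2\} \cup \{j_*\}$ with $j_*$ chosen so that $p_{j_*} < \eps/2$, giving $\Ncal_\xi(\eps) = O(1/\eps)$. The convergence $\Delta_n(\mu) \ninf 0$ follows from the Cohen-Kontorovich independent-case characterization (the functional $T(p) = \sup_i \log(i+1)/\log(i+1) = 1 < \infty$), or equivalently from \cref{thm:more_general_suff_condition} with $K = 2$ and atomic events $E_k = \{X_k = 1\}$ (after trimming the first few to ensure $\Pbb(E_k) \leq \eps$), since $\{X_i \neq X_j\} \subset \{X_i = 1\} \cup \{X_j = 1\}$.

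To show that (TC1) fails, I would fix $\eps = 1/4$ and assume for contradiction that events $(E_k)_{k \in \N}$ and a finite set $J \subset \N$ satisfy (TC1). Let $j_\star \in \argmin_{j \in J} p_j$; since $J$ is finite, $p_{j_\star} > 0$. For each $i \in \N \setminus J$, denote by $j(i) \in J$ the center assigned by (TC1) and by $E_{k(i)}$ the associated covering event. The bad event contains the fixed-mismatch event
\[
\{X_i \neq X_{j(i)}\} \supset \{X_i = 0,\; X_{j(i)} = 1\},
\]
whose probability under independence is $(1 - p_i) p_{j(i)} \geq p_{j_\star}/2$, using $p_i \leq 1/2$ for all $i \geq 1$. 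Consequently every event actually used satisfies $\Pbb(E_k) \geq p_{j_\star}/2$, so $\log(1/\Pbb(E_k))$ is uniformly bounded above by $\log(2/p_{j_\star})$.

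The main obstacle is then to prove that infinitely many distinct events $E_k$ are nevertheless required. For any fixed center $j_0 \in J$ and event $E$ with $\Pbb(E) \leq 1/4$ that covers $\{X_i \neq X_{j_0}\}$ for all $i$ in some set $I \subset \N$, independence yields
\[
\Pbb(E) \geq (1 - p_{j_0})\Bigl(1 - \prod_{i \in I}(1 - p_i)\Bigr),
\]
and since $p_{j_0} \leq 1/2$ this forces $\prod_{i \in I}(1-p_i) \geq 1/2$, hence $\sum_{i \in I} p_i \leq \log 2$. Summing this bound over all $(E_k, j_0)$ pairs where $j_0$ appears as some $j(i)$ with $i$ covered by $E_k$ shows that with $N$ events in total, the cumulative $p_i$-mass covered is at most $N\,|J| \log 2$; since $\sum_{i \notin J} 1/(i+1) = \infty$, this forces $N = \infty$. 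Combining with the uniform bound on $\log(1/\Pbb(E_k))$ yields $T = \sup_k \log(k+1)/\log(1/\Pbb(E_k)) = \infty$, contradicting the $T$-finiteness required by (TC1) and completing the proof.
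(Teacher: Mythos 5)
Your proof is correct, and it takes a genuinely different route from the paper's. For the example, you use the \emph{product} measure with $p_i = 1/(i+1)$, whereas the paper constructs a correlated process $X_k = (1-A_k)Y_0 + A_k Y_k$ in which every coordinate is $\Bernu(1/2)$ and the natural center $Y_0$ is deliberately omitted from $\{X_j\}$. Your example makes the same underlying point (the absence of a suitable center within the process itself, here the constant $0$), but with the arguably more striking observation that (TC1) fails even for an \emph{independent} process satisfying $T(p)<\infty$. The argument for deriving the contradiction is also reversed relative to the paper's: you show that infinitely many distinct covering events must be used (via the mass bound $\sum_{i\in I_{k,j_0}} p_i\leq \log 2$ per pair and $\sum_i p_i=\infty$), and since every used event has $\Pbb(E_k)\geq p_{j_\star}/2>0$, the sup-condition $\sup_k \log(k+1)/\log(1/\Pbb(E_k))<\infty$ of (TC1) is violated; the paper instead uses the sup-condition to conclude that only \emph{finitely} many events can possibly be used, then applies a pigeonhole and union bound to contradict $\Pbb(E_k)\leq\eps$. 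Both are valid, and your mass-counting argument is the cleaner of the two. The one cosmetic point worth tightening is the phrase ``every event actually used satisfies $\Pbb(E_k)\geq p_{j_\star}/2$'': this bound holds for events used to cover some $i\notin J$ (the relevant ones), not for events used only for $i\in J$, but that does not affect the argument since those are finitely many and irrelevant to the mass count.
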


\begin{proof}
     Let $(Y_k)_k\overset{\iid}{\sim}\Bernu(1/2)$ and independent 
random variables $A_k$ such that $A_k\sim\Bernu(1/\sqrt k)$. Put
\begin{equation*}
    X_k = (1-A_k) Y_0 + A_k Y_k,\quad k\geq 1.
\end{equation*}
We then define the distributions $\mu$ such that $(X_k)_{k\geq 1}\sim\mu$.

We first show that $\Delta_n(\mu)\to 0$ as $n\to\infty$ by checking that it satisfies the condition from Corollary~\ref{cor:condition_useless}. Intuitively, the random variables become closer and closer to $Y_0$, hence we can choose $Z_1:=Y_0$. Fix $\eps>0$ and let $k_\eps\geq 1/\eps^2$. We can then pose $J = 1+k_\eps$ and $Z_j = X_{j-1}$ for $2\leq j\leq J$. For the other variables, we can simply pose $E_k:= \{X_{k+k_\eps}\neq Y_0\}$ for $k\geq 1$. For the covering sets, we can simply pose $K_k = \{1\}$ for $k\leq k_\eps$ and $K_k = \{k-k_\eps\}$ for $k>k_\eps$. 
We can check that these parameters satisfy the condition from Corollary~\ref{cor:condition_useless} in a straightforward manner. For $k\geq 1$, $\Pbb(E_k) = \Pbb(X_{k+k_\eps}\neq Y_0) = \frac{1}{\sqrt{k+k_\eps}}\leq \eps$. Next, because these probabilities decay as $\frac{1}{\sqrt{k_\eps+k}}$, the second condition in (SC) is satisfied. Last, for $i\leq k_\eps$ we have $\{X_i\neq Z_{i+1}\} = \{X_i\neq X_i\} = \emptyset \subset E_1$, and for $i>k_\eps$, $\{X_i\neq Z_1\} = \{X_i\neq Y_0\} = E_{i-k_\eps}$. This ends the proof that the condition from Corollary~\ref{cor:condition_useless} is satisfied and as a result,
\begin{equation*}
    \Delta_n(\mu)\ninf 0.
\end{equation*}

Next, suppose by contradiction that (TC1) is satisfied. We use this property for $\eps= \frac{1}{4}$, using the same notations as in the condition. For $i\geq 1$, we also denote $j_i\in J$ and $k_i\in\N$ elements such that $\{X_i\neq X_{j_i}\}\subset E_{k_i}$. Because $J$ is finite, we denote $j_{max} = \max\{j,j\in J\}$. Then, for any $i\geq 4$, we have
\begin{equation*}
    \Pbb(X_i\neq X_{j_i}) = \frac{1}{2}\paren{\frac{1}{\sqrt i}+\frac{1}{\sqrt {j_i}} - \frac{1}{\sqrt{ij_i}}} \geq \frac{1}{2}\paren{\frac{1}{\sqrt {j_i}} - \frac{1}{\sqrt{4j_i}}} = \frac{1}{4\sqrt {j_i}} \geq \frac{1}{4\sqrt {j_{max}}}.
\end{equation*}
Next, the second condition of (TC1) implies in particular that there exists $k_{max}$ such that
\begin{equation*}
    \Pbb(E_k) < \frac{1}{4K\sqrt{j_{max}}},\quad k\geq k_{max}.
\end{equation*}
Now recall that for $i\geq 4$, one has $\Pbb(X_i\neq X_{j_i}) \leq \Pbb(E_{k_i})$. Combining the two last equations shows that for any $i\geq 4$, we have $k_i< k_{max}$. Recalling that $j_i$ can only take $|J|$ values, this implies that there is some couple $(j,k)\in J\times [k_{max}-1]$ for which the set
\begin{equation*}
    \Scal(j,k):=\{4\leq i \leq (4k_{max}|J|)^2+3, (j_i,k_i)=(j,k)\},
\end{equation*}
has at least $16k_{max}|J|$ elements. Next, note that
\begin{align*}
    \bigcup_{i\in\Scal(j,k)} \{X_i\neq X_j\} \subset E_k.
\end{align*}
Hence, taking the probabilities yields
\begin{align*}
    1-\Pbb(E_k) \leq \Pbb\paren{\bigcap_{i\in\Scal(j,k)} \{X_i = X_j\}}
    &\leq \Pbb(X_j\neq Y_0) + \Pbb\paren{\bigcap_{i\in\Scal(j,k)} \{X_i = Y_0\} }\\
    &\leq \frac{1}{2\sqrt j} + \prod_{i\in\Scal(j,k)} \paren{1-\frac{1}{2\sqrt i}}\\
    &\leq \frac{1}{2} + \exp\paren{-\sum_{i\in\Scal(j,k)}  \frac{1}{2\sqrt i}}.
\end{align*}
Now we compute
\begin{equation*}
    \sum_{i\in\Scal(j,k)}  \frac{1}{2\sqrt i} \geq \frac{16k_{max}|J|}{2\sqrt{(4k_{max} |J|)^2+3}} \geq \frac{3}{2}.
\end{equation*}
Together with the previous equation, this implies
\begin{equation*}
    \Pbb(E_k) \geq \frac{1}{2} - e^{-3/2} >\frac{1}{4}=\eps.
\end{equation*}
This contradicts the first property of condition (TC1), which proves that $\mu$ does not satisfy this condition and ends the proof.
\end{proof}

In the previous example, one of the main reasons why the condition (TC1) is not satisfied is that there is no adequate finite choice of ``centers'' $(X_j)_{j\geq 1}$---the random variable $Y_0$ is missing from the sequence $\{X_j,j\geq 1\}$, while it would be a natural candidate to be used as the center. A possible tentative to fix this issue would be to allow the centers to be general random variables, in the spirit of the condition proposed in Corollary~\ref{cor:condition_useless}. This yields the following condition.

\paragraph{Tentative Condition 2 (TC2)} \textit{
The metric space $(\N,\xi)$ is totally bounded and for any $\eps>0$, there exist events $(E_k)_{k\in\N}$ and a finite set of random variables $(Z_j)_{j\in [J]}$ (defined on the same probability space as $\mu$) with
    \begin{itemize}
        \item $\Pbb(E_k)\leq \eps,\;\forall k\in\N$,
        \item $\displaystyle \sup_{k\in\N} \frac{\log(k+1)}{\log\frac{1}{\Pbb(E_k)}}<\infty$,
        \item $\forall i\in\N, \exists j\in [J], \exists k\in \N,  \{X_i\neq Z_j\}\subset E_k$.
    \end{itemize}
}

By Corollary~\ref{cor:condition_useless} and \cref{thm:more_general_suff_condition}, this is still a sufficient condition, which fortunately also encompasses the example provided in the previous result, Proposition~\ref{prop:TC1_non_necessary}. However, even with this fix, being able to cover bad events $\{X_i\neq Z_j\}$ with multiple events $E_k$ is still necessary.

\begin{proposition}[(TC2) is not necessary]
\label{prop:TC2_non_necessary}
    There exists a probability measure $\mu$ on $\{0,1\}^\N$ that does not satisfy condition (TC2) but $\Delta_n(\mu)\ninf 0$.
\end{proposition}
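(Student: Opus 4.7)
The plan is to construct a distribution whose components each have two independent ``sources of deviation,'' so that single-event covering with finitely many centers is incompatible with the T-functional while a two-event cover via Corollary~\ref{cor:condition_useless} yields $\Delta_n\to 0$.

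\textbf{Construction.} Take $(A_k)_{k\geq 1}$ and $(B_l)_{l\geq 1}$ to be mutually independent families of independent Bernoulli random variables with $A_k\sim\Bernu(p_k)$ and $B_l\sim\Bernu(q_l)$, where $p_k=1/(k+1)$ and $q_l=1/(l+1)$. Index components by $(k,l)\in\N^2$ (enumerated as $\N$) via
\begin{equation*}
X_{(k,l)}:=A_k\vee B_l=\mathbbm{1}\bigl(A_k=1\text{ or }B_l=1\bigr),
\end{equation*}
and let $\mu$ denote the joint distribution of $(X_{(k,l)})$.

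\textbf{Convergence.} I would apply Corollary~\ref{cor:condition_useless} with $K=2$. For each $\eps>0$ set $K_\eps:=\lceil 1/\eps\rceil$ and take the events $\{A_k=1\}$ for $k\geq K_\eps$ and $\{B_l=1\}$ for $l\geq K_\eps$ (all with probability $\leq\eps$ and bounded T-functional), together with the centers $Z=0$ for pairs with both $k,l\geq K_\eps$ (covered by the two events $\{A_k=1\}$ and $\{B_l=1\}$), $Z=A_k$ for pairs with $k<K_\eps$ and $l\geq K_\eps$ (covered by the single event $\{B_l=1\}$), the symmetric choice for $l<K_\eps$, and $Z=X_{(k,l)}$ for the finitely many pairs with both $k,l<K_\eps$ (bad event empty). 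Then $\Delta_n(\mu)\to 0$ follows from \cref{thm:more_general_suff_condition}.

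\textbf{Failure of (TC2).} Suppose (TC2) holds with $\eps\in(0,1/10)$, finite centers $(Z_j)_{j\in[J]}$, and events $(E_m)$ with T-functional constant $T$. For each $k\geq 1$, pigeonhole over $l$ yields $j_k\in[J]$ with $j(k,l)=j_k$ for infinitely many $l$, forming an infinite set $L_k$. Since $\Pbb(X_{(k,l)}\neq A_k)=(1-p_k)q_l\to 0$, the triangle inequality gives $r_k:=\Pbb(A_k\neq Z_{j_k})\leq\eps$. The crucial estimate uses only that $\{B_l=0\}$ implies $X_{(k,l)}=A_k$: for every $l\in L_k$,
\begin{equation*}
\Pbb(E_{m(k,l)})\geq\Pbb\bigl(\{A_k\neq Z_{j_k}\}\cap\{B_l=0\}\bigr)=r_k-\Pbb\bigl(\{A_k\neq Z_{j_k}\}\cap\{B_l=1\}\bigr)\geq r_k-q_l.
\end{equation*}
Since $q_l\to 0$, the subset $L_k':=\{l\in L_k:q_l<r_k/2\}$ is cofinite in $L_k$ and thus infinite, and $\Pbb(E_{m(k,l)})\geq r_k/2$ for $l\in L_k'$. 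If $r_k>0$, this produces infinitely many events of probability at least $r_k/2$, violating the T-functional, which allows at most $(2/r_k)^T$ such events. Hence $r_k=0$, i.e., $Z_{j_k}=A_k$ almost surely, for every $k\in\N$. But $(Z_j)_{j\in[J]}$ is finite while the $A_k$ are pairwise not almost-surely-equal (being independent and non-degenerate), so at most $J$ values of $k$ can satisfy this, contradicting the requirement for all $k$.

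\textbf{Main obstacle.} The delicate part is the lower bound $\Pbb(E_{m(k,l)})\geq r_k-q_l$. A naive decomposition of $\Pbb(\{X_{(k,l)}\neq Z_{j_k}\})$ into $(A_k,Z_{j_k})$ and $B_l$ contributions would require independence of $B_l$ from $(A_k,Z_{j_k})$, which fails a priori since $Z_{j_k}$ is an arbitrary random variable on the full probability space and may depend on any $B_l$. The resolution adopted here is to restrict attention to the slice $\{B_l=0\}$ and invoke only the unconditional bound $\Pbb(\,\cdot\,\cap\{B_l=1\})\leq\Pbb(B_l=1)$, which sidesteps any independence hypothesis. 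After this observation, the remainder is a clean pigeonhole/T-functional accounting.
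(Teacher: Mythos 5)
Your construction and the convergence half are fine (modulo the unstated but easy check that $(\N,\xi)$ is totally bounded, which Corollary~\ref{cor:condition_useless} requires), and the observation that $\Pbb(E_{m(k,l)})\ge \Pbb(\{A_k\neq Z_{j_k}\}\cap\{B_l=0\})\ge r_k-q_l$ without any independence assumption on $Z_{j_k}$ is correct. The gap is in the final step: from infinitely many $l\in L_k'$ with $\Pbb(E_{m(k,l)})\ge r_k/2$ you conclude ``infinitely many events of probability at least $r_k/2$, violating the T-functional.'' But the indices $m(k,l)$ need not be distinct: the T-functional only bounds the number of \emph{distinct} indices $m$ with $\Pbb(E_m)\ge r_k/2$, and a single event may legitimately be reused for infinitely many components. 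In fact, in your own construction this happens: with center $Z\equiv 0$ (so $r_k=p_k>0$) and a sparse infinite set $L$ with $\sum_{l\in L}q_l\le \eps/2$, the single event $E=\{A_k=1\}\cup\bigcup_{l\in L}\{B_l=1\}$ has probability at most $p_k+\eps/2\le\eps$ and contains $\{X_{(k,l)}\neq 0\}$ for every $l\in L$. So no contradiction arises at that point, and the intermediate conclusion $r_k=0$ (hence $Z_{j_k}=A_k$ a.s.) is not established by your argument.

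The statement itself is still salvageable for your $\mu$, but the missing case is exactly where the real work lies (and is the part the paper's proof spends its effort on, via the set $\Ccal(l)$ and the estimate that a reused covering event must have probability close to $1$). A correct route: for each $k$ outside the at most $J$ exceptional values with $Z_j=A_k$ a.s., pigeonhole over centers by $q$-\emph{mass} rather than cardinality, so that some center $j$ handles a set $M$ of $l$'s with $\sum_{l\in M}q_l=\infty$. By your estimate, for all but finitely many $l\in M$ the assigned events have probability at least $r_{k,j}/2>0$, hence come from a finite set of indices; so some single event $F$ covers $\{X_{(k,l)}\neq Z_j\}$ for all $l$ in a subset $M'\subset M$ which still has infinite $q$-mass. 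Then $F\supseteq\{A_k=0,Z_j=0\}\cap\bigcup_{l\in M'}\{B_l=1\}$, and since the $B_l$ are independent with $\sum_{l\in M'}q_l=\infty$ the union has probability one, giving $\Pbb(F)\ge\Pbb(A_k=0,Z_j=0)$; also $F\supseteq\{A_k=0,B_l=0,Z_j=1\}$ for each $l\in M'$, giving $\Pbb(F)\ge\Pbb(A_k=0,Z_j=1)-\inf_{l\in M'}q_l=\Pbb(A_k=0,Z_j=1)$. Summing, $2\Pbb(F)\ge 1-p_k$, so $\Pbb(F)\ge(1-\eps)/2>\eps$, the desired contradiction. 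This is the same phenomenon the paper exploits (its construction uses a block-level variable $A_l$ shared by $2^{l-1}$ components plus individual $B_i$'s and independent randomizers $Y_i$, and its contradiction is finitary: one event covering many components' bad events forces all of them to agree with the center simultaneously, which has small probability); your infinite-index version can work, but only after the reused-event case is handled as above.
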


\begin{proof}
We partition $\N$ into $\N = \bigcup_{l\geq 1}I_l$, where $I_l =\{2^{l-1}\leq i <2^l\}$ for $l\geq 1$. We consider binary random variables $Y_i,A_i,B_i$ for $i\geq 1$, together independent and such that $Y_i\sim \Bernu(1/2)$, and $A_i, B_i\sim\Bernu(1/\sqrt i)$, for all $i\geq 1$. For $l\geq 1$ and any $i\in I_l$, put
\begin{equation*}
    X_i = (1-A_l)(1-B_i) Y_0 + (A_l+B_i-A_lB_i) Y_i.
\end{equation*}
We then define $\mu$ as the distribution of $(X_i)_{i\geq 1}$. We first show that $\Delta_n(\mu)\ninf 0$ by proving that $\mu$ satisfies the sufficient condition from Corollary~\ref{cor:condition_useless}. Here, we use $K=2$. Fix $\eps>0$ and let $i_0 = \ceil{\frac{1}{\eps^2}}$ and $i_\eps =2^{i_0}$. We then define the events $(E_k)_{k\geq 1}$ as the sequence $(\{A_{i_0}=1\},\{B_{i_0}=1\}, \{A_{i_0+1}=1\},$ $\{B_{i_0+1}=1\},\ldots)$. Because of the polynomial decay of $\Pbb(A_i=1)=\Pbb(B_i=1)=\frac{1}{\sqrt i}$, we can check easily that the events $(E_k)_{k\geq 1}$ satisfy the first two conditions from Corollary~\ref{cor:condition_useless}. Last, we consider $J=i_\eps +1$ centers $Y_0$ and $(X_i)_{i\leq i_\eps}$. The third condition from Corollary~\ref{cor:condition_useless} is trivially satisfied for $i\leq i_\eps$, since $\{X_i \neq X_i\}=\emptyset$. And for $i> i_\eps$, letting $l\geq 1$ such that $i\in I_l$, since $i_\eps=2^{i_0}$, we have $l\geq i_0$. In particular, the events $\{A_l=1\}$ and $\{B_i=1\}$ belong to the sequence $(E_k)_{k\geq 1}$. We can conclude by noting that
\begin{equation*}
    \{X_i\neq Y_0\}\subset \{A_l=1\}\cup \{B_i=1\}.
\end{equation*}
This ends the proof that $\Delta_n(\mu)\ninf 0$.

We now show that $\mu$ does not satisfy (TC2). We suppose by contradiction that it does and use the property for $\eps=\frac{1}{2}$. We use the notations of the condition and for any $i\geq 1$, we denote by $j_i\in [J]$ and $k_i\in\N$ elements such that $\{X_i\neq Z_{j_i}\}\subset E_{k_i}$. Because of the second property, there exists $C>0$ such that
\begin{equation}\label{eq:second_condition}
    \Pbb(E_k) \leq \frac{1}{k^{1/C}},\quad l\geq 1.
\end{equation}
Next, we recall that the sequence $(j_i)_{i\geq 1}$ only takes values in $[J]$. As a result, for any $l\geq |J|+1$ there exists some index $j(l)$ such that
\begin{equation*}
    |\{i\in I_l: j_i=j(l)\}| \geq \frac{|I_l|}{|J|} = \frac{2^{l-1}}{|J|}.
\end{equation*}
We denote this set $\Acal(l) = \{i\in I_l: j_i=j(l)\}$. Suppose for now that for some $i\in\Acal(l)$, we have
\begin{equation*}
    \Pbb(X_i\neq Z_{j(l)}) \leq \frac{3}{8\sqrt{l}}.
\end{equation*}
Then, for any $i'\in \Acal(l)\setminus\{i\}$, one has
\begin{equation*}
    \Pbb(X_{i'} \neq Z_{j(l)}) \geq \Pbb(X_{i'} \neq X_i) - \Pbb(X_i\neq Z_{j(l)})
    \geq \frac{3}{4}\Pbb(A_l = 1) - \frac{3}{8\sqrt l} = \frac{3}{8\sqrt l}.
\end{equation*}
As a result, in all cases, there is a set $\Bcal(l)\subset \Acal(l)$ of cardinality $|\Bcal(l)| = |\Acal(l)|-1 \geq 2^{l-1}/|J|-1$ and for which
\begin{equation*}
    \Pbb(E_{k_i}) \geq \Pbb(X_i\neq Z_{j_i}) = \Pbb(X_i\neq Z_{j(l)}) \geq \frac{3}{8\sqrt l}.
\end{equation*}
By Eq~\eqref{eq:second_condition}, this implies that for all $i\in \Bcal(l)$, one has $k_i \leq (8l)^{C/2}$. As a result, there exists $k(l)$ for which
\begin{equation*}
    |\{i\in I_l: (j_i,k_i) = (j(l),k(l))\}| \geq \frac{|\Bcal(l)|}{ (8l)^{C/2}} \geq \frac{2^{l-1}-|J|}{ (8l)^{C/2}|J|}.
\end{equation*}
We denote this set by $\Ccal(l):=\{i\in I_l: (j_i,k_i) = (j(l),k(l))\}$. In particular, we obtained that for $l\geq (|J|+2)\land \log_2(8(8l)^{C/2}|J|)$,
\begin{equation*}
    |\Ccal(l)| \geq \frac{2^l}{4(8l)^{C/2}|J|} \geq 2.
\end{equation*}
We now use similar arguments to that of Proposition~\ref{prop:TC1_non_necessary}. Fix some element $i(l)\in \Ccal(l)$. We have
\begin{align*}
    1-\Pbb(E_{k(l)}) &\leq \Pbb\paren{\bigcap_{i\in\Ccal(l)} \{X_i = Z_{j(l)}\}}\\
    &\leq \Pbb(X_{i(l)} \neq Y_0) + \Pbb\paren{\bigcap_{i\in\Ccal(l)}\{X_i = Y_0\}}\\
    &= \frac{1}{2}\paren{\frac{1}{\sqrt{i(l)}}+\frac{1}{\sqrt l}- \frac{1}{\sqrt{l i(l)}}} + \prod_{i\in\Ccal(i)}\paren{1-\frac{1}{2\sqrt i}-\frac{1}{2\sqrt l}+\frac{1}{2\sqrt{il}}}\\
    &\leq \frac{1}{2}\paren{\frac{1}{2^{(l-1)/2}}+\frac{1}{\sqrt l}} + \exp\paren{-\frac{|\Ccal(l)|}{2\sqrt l}}.
\end{align*}
For $l$ sufficiently large, this gives $1-\Pbb(E_{k(l)})\leq \frac{1}{4}$, which contradicts the hypothesis $\Pbb(E_k)\leq \eps=\frac{1}{2}$ for all $k\geq 1$. Hence $\mu$ does not satisfy (TC2), which ends the proof.
\end{proof}

\paragraph{Acknowledgements.}
AK and DC were
was partially supported by
the Israel Science Foundation
(grant No. 1602/19), an Amazon Research Award,
and the Ben-Gurion University Data Science Research Center.

\bibliographystyle{abbrvnat}
\bibliography{refs}

\end{document}